\theoremstyle{plain}
\newtheorem{tw}{Theorem}[section]
\newtheorem {lem} [tw]{Lemma}
\newtheorem {prop}[tw] {Proposition}
\newtheorem{cor}[tw]{Corollary}
\theoremstyle{definition}
\newtheorem {deft}[tw] {Definition}
\newtheorem {rem} [tw]{Remark}
\newtheorem{example}[tw]{Example}
\newcommand{\bc} {\mathbb C}
\newcommand{\bn}{\mathbb N}
\newcommand{\br}{\mathbb R}
\newcommand{\BGW}{K\la d\ra}
\newcommand{\one}{{\bf 1}}
\newcommand {\Pol} {{\textup{Pol}}}
\newcommand {\id} {{\textrm{id}}}
\newcommand{\QG}{\mathbb{G}}
\newcommand{\QH}{\mathbb{H}}
\newcommand{\Com}{\Delta}
\newcommand{\Cou}{\epsilon}
\newcommand{\la}{\langle}
\newcommand{\ra}{\rangle}
\newcommand{\ot}{\otimes}
\newcommand{\wt}{\widetilde}
\numberwithin{equation}{section}
\begin{document}

\title[L\'evy-Khintchine decompositions for universal quantum groups]{L\'evy-Khintchine decompositions for generating functionals on algebras associated to universal compact quantum groups}

\begin{abstract}
We study the first and second cohomology groups of the $^*$-algebras of the universal 
unitary and orthogonal quantum groups $U_F^+$ and $O_F^+$. This provides 
valuable information for constructing and classifying L\'evy processes on these 
quantum groups, as pointed out by Sch\"urmann. In the case when all 
eigenvalues of $F^*F$ are distinct, we show that these $^*$-algebras have the 
properties (GC), (NC), and (LK) introduced by Sch\"urmann and studied recently 
by Franz, Gerhold and Thom. In the degenerate case $F=I_d$, we show that they do 
not have any of these properties. We also compute  the second cohomology group 
of $U_d^+$ with trivial coefficients -- $H^2(U_d^+,{}_\Cou\bc_\Cou)\cong \bc^{d^2-1}$ -- and 
construct an explicit basis for the corresponding second cohomology group for 
$O_d^+$ (whose dimension was known earlier thanks to the work of Collins, 
H\"artel and Thom). 
\end{abstract}

\keywords{Hopf $^*$-algebra; cocycle; generating functional; cohomology groups; quantum L\'evy process; quantum group}
\subjclass[2010]{Primary 16T20, Secondary 16T05}

\author{Biswarup Das}
\address{Instytut Matematyczny, Uniwersytet Wroc\l awski, pl.Grunwaldzki 2/4, 50-384 Wroc\l aw, Poland}
\email{biswarup.das@math.uni.wroc.pl}

\author{Uwe Franz}
\address{Laboratoire de math\'ematiques de Besan\c{c}on,
Universit\'e de Bourgogne Franche-Comt\'e, 16, route de Gray, 25 030
Besan\c{c}on cedex, France}
\email{uwe.franz@univ-fcomte.fr}
\urladdr{http://lmb.univ-fcomte.fr/uwe-franz}

\author{Anna Kula}
\address{Instytut Matematyczny, Uniwersytet Wroc\l awski, pl.Grunwaldzki 2/4, 50-384 Wroc\l aw, Poland}
\email{Anna.Kula@math.uni.wroc.pl}

\author{Adam Skalski}
\address{Institute of Mathematics of the Polish Academy of Sciences,
ul.~\'Sniadeckich 8, 00--656 Warszawa, Poland
}
\email{a.skalski@impan.pl}

\maketitle

\section*{Introduction}

 The theory of L\'evy processes on involutive bialgebras subsumes both the 
theory of L\'evy processes with values in topological groups and the theory of 
factorizable representations of current groups and algebras. The first of these 
two theories describes stochastic processes with independent and stationary 
increments. Its paradigms are Brownian motion and the Poisson process and it 
played a central role in the invention of classical stochastic calculus, see 
\cite{app1,app2} and the references therein. The second was motivated by quantum 
field theory \cite{araki} and was instrumental in the development of quantum 
stochastic calculus, cf.\ \cite{acc,str}.

 Noncommutative L\'evy processes (initially called ``white noises'') were first 
introduced on $\mathbb{Z}_2$-graded involutive bialgebras \cite{asw} and later 
extended to arbitrary graded (or braided) involutive bialgebras 
\cite{schurmann93} and dual groups \cite{schurmann95}. It was soon realized that 
cohomology plays an important role in classifying and constructing L\'evy 
processes on a given $^*$-bialgebra, see \cite{schurmann90,schurmann93} and also 
\cite{FGT15,bfg} for recent progress in this direction.

 Based on Sch\"urmann's work \cite{schurmann93}, the standard approach to 
classifying L\'evy processes on a given involutive bialgebra or dual group 
consists in classifying first its $^*$-representations acting on a pre-Hilbert 
space and then determining their first cohomology group as well as the second 
cohomology group with trivial coefficients. For the Woronowicz quantum group $SU_q(2)$ 
this programme was carried out by Sch\"urmann and Skeide in the nineties 
\cite{schurmann+skeide98,skeidePhD}. Knowing the L\'evy processes on some quantum group gives geometric information on its structure and can be used to construct Dirac operators \cite{FabioUweAnna}.

If the C$^*$-algebra associated to a given compact quantum group is of type I, as it happens for $SU_q(2)$, then all $^*$-representations of the C$^*$-algebra are direct integrals of irreducible ones. In this case, if the irreducible $^*$-representations are known, it can be feasible to use the approach outlined above to classify all L\'evy processes on a given involutive bialgebra.

But there exist many interesting compact quantum groups whose C$^*$-algebras 
are not of type I and whose representation theory is too wild to be classified. This is the case of the universal unitary $U_F^+$ and the universal orthogonal $O_F^+$ quantum groups (see Examples \ref{ex_unitary} and \ref{ex_orth} for definitions). Then one can still gain a better understanding by studying the second cohomology group with trivial coefficients and the first cohomology group of some special $^*$-representations, as was done for quantum permutation groups in 
\cite{FKS}. In this paper we start from Sch\"urmann's work 
\cite{schurmann90,schurmann93} on the Brown-Glockner-von Waldenfels algebra \cite{brown,gvw}, i.e.\ the $^*$-algebra generated by the coefficients of a 
$d\times d$ unitary matrix with noncommuting entries, see Example \ref{ex_bgv}, and use his results to study 
the universal unitary and orthogonal quantum groups $U_F^+$ and $O_F^+$.

 In our study we consider mainly the following two extremal cases: the so-called generic 
case, that is the case where the eigenvalues of $F^*F$ are pairwise distinct, see Section 
\ref{sec-generic}, and the maximally degenerate case where $F=I_d$, see Section 
\ref{sec-degenerate}. In the generic case we show that all Gaussian cocycles  on $^*$-algebras of the compact quantum groups $U_F^+$ and 
$O_F^+$ admit a generating functional and can therefore be used to construct 
L\'evy processes on these quantum groups, see Theorems \ref{thm_au_gauss} and 
\ref{thm_aoq_decomp}. Using the terminology of \cite{FGT15}, see also Definition 
\ref{def-properties}, this means that these $^*$-algebras have the property 
(GC). This implies that all their generating functionals admit a 
L\'evy-Khintchine-type decomposition into a Gaussian part and a purely 
non-Gaussian part. Using similar methods, we show that the $^*$-algebra associated to the compact quantum group $SU_q(3)$ introduced by Woronowicz \cite{woronowicz88} does not have property (GC), see  Proposition \ref{prop_suqn_gc}. This allows us to conclude that the (GC) property need not pass neither from a quantum group to its quantum subgroup, nor from a quantum subgroup to the whole quantum group.

 In Section \ref{sec-degenerate} we consider the maximally degenerate case 
$F=I_d$ and classify the cocycles of the one-dimensional representations of the 
$^*$-algebras of the compact quantum groups $U_d^+:=U_{I_d}^+$ and 
$O_d^+:=O_{I_d}^+$. We also give necessary and sufficient conditions for these 
cocycles to admit a generating functional and therefore a L\'evy process. It 
follows from these results that  the $^*$-algebras of the compact quantum groups $U_d^+$ with 
$d\ge 2$ and $O_d^+$ with $d\ge 3$ have none of the properties (GC), (NC), or 
(LK), 
see Theorems \ref{thm-non-generic U_F} and 
\ref{thm-non-generic O_F}. This provides the first examples of  quantum groups 
which are non-cocommutative and do not have property (LK), and also has certain 
consequences for general non-generic $U_F^+$ and $O_F^+$. 

 The last section of this paper contains computations of the second cohomology group
with trivial coefficients of these $^*$-algebras.  Collins, H\"artel, and Thom 
\cite{CHT} (see also \cite{BichonCompositio}) have shown that the second 
cohomology group with trivial coefficients of 
the $^*$-algebra of $O_d^+$ has dimension $\frac{d(d-1)}{2}$. Here we show that 
 $H^2(U^+_d,{}_\Cou\bc_\Cou)$, the second cohomology group of  the $^*$-algebra of $U_d^+$ with trivial coefficients, has dimension $d^2-1$ (Theorem 
\ref{thm_main_cohomology_Ud}) and describe an explicit basis for it. For that, 
we define a 'defect map' which measures how far a cocycle is from being a 
coboundary, and show that this map induces an isomorphism between  
$H^2(U^+_d,{}_\Cou\bc_\Cou)$ and $sl(d,\bc)$, the space of complex $d\times d$ 
matrices with trace zero. A similar method establishes an isomorphism between 
$H^2(O_d^+,{}_\Cou\bc_\Cou)$ and $o(d,\bc)$, the space of complex 
antisymmetic $d \times d$ matrices, and also provides a basis of the former. 

\section{Notations and preliminaries} \label{NotPrel}

\subsection{Unitary quotient algebras} \label{examples}

The object of our study will be augmented algebras $(A,\Cou)$, i.e.\ unital $*$-algebras (over $\bc$)  equipped with a unital $*$-homomorphism $\Cou:A\to \bc$ (a character). We are going to consider only a special kind of augmented algebras. We will assume that $A$ admits a collection of elements $\{u_{jk}:j,k=1\ldots, d\}$ (where $d$ is some positive integer) generating $A$ as a $^*$-algebra, such that the matrix $u= (u_{jk})_{j,k=1}^d\in 
M_d(A)$ is unitary (that is, $uu^* = u^*u = I_{M_d(A)})$ and that the character satisfies the formula 
\begin{equation} \label{Def:counit}
\Cou(u_{jk}) = \delta_{jk}, \;\;j,k =1, \ldots, d .
\end{equation}
We shall call such pairs $(A,\Cou)$ for short \emph{unitary quotient algebras}, and the reason for that will become clear soon. If the set of generators is fixed, then we will denote a unitary quotient algebra by $(A,u)$, and will occasionally write $d=\dim u$. The character on $(A,u)$ is uniquely determined by the formula \eqref{Def:counit} (note that its existence is assumed in the definition of a unitary quotient algebra). 

The key example to keep in mind is the following.

\begin{example}{{\bf (Brown-Glockner-von Waldenfels algebra)}} \label{ex_bgv}
The \emph{Brown-Glockner-von Waldenfels 
algebra} (also called the \emph{noncommutative analogue of the algebra of coefficients of the unitary group}) $\BGW$, $d \in \bn$, is the universal unital $^*$-algebra with $d^2$ generators $x_{jk}$ ($j,k=1,2,\ldots,d$) and the relations
making the matrix $x:=(x_{jk})_{j,k=1}^d$ unitary. Then $(\BGW, x)$ is a unitary quotient algebra, and all other unitary quotient algebras are quotients of $(\BGW,x)$.
\end{example}

More examples of unitary quotients algebras come from compact matrix quantum groups in the sense of Woronowicz (\cite{wor1}). More precisely, every compact matrix quantum  group $\QG=(\mathsf{A},u)$ with 
a unital C$^*$-algebra $\mathsf{A}$ and a fundamental representation $u$ of $\QG$, gives rise to a structure of a unitary quotient algebra $(A,u)$ with $A$ 
being the unital $^*$-algebra $\Pol(\QG)$ spanned by coefficients of irreducible representations of $\QG$ and the same $u$. The character $\Cou$ is the counit. Note that the pair $(\Pol(\QG),\Cou)$ is not only an augmented algebra, but also has the structure of a Hopf $*$-algebra (i.e.\ it admits a comultiplication and an antipode). In the situations where this additional structure plays a role (e.g.\ Lemma \ref{lemma_eta_S}), it will be convenient to refer to unitary quotient algebras $(A,u)$ arising from compact matrix quantum groups as \emph{CMQG} (compact-matrix-quantum-group) \emph{algebras}. 

Contrary to the Brown-Glockner-von Waldenfels algebra, the canonical generators of CMQG algebras will always satisfy certain additional relations required by the fact that also $u^t$, the transpose of $u$, needs to be invertible. Here, we use the following notation: for a matrix $a = (a_{jk})_{j,k=1}^d \in M_d(A)$ of elements of a $^*$-algebra, its transpose will be  $a^t:= (a_{kj})_{j,k=1}^d$ and its entry-wise conjugate $\bar{a}:= (a_{jk}^*)_{j,k=1}^d$.

\medskip
A crucial notion in our approach is that of a quotient. Given two unitary quotient algebra, $(A,u)$ and $(B,v)$, we will say that $(B,v)$ is a \emph{quotient} of $(A,u)$ if $\dim u = \dim v$ and there is a unital $^*$-homomorphism $q: A \to B$ such that
\[ q(u_{jk}) = v_{jk}, \;\;\; j,k=1,\ldots, \dim v.\]
Note that then $q$ is automatically surjective, and preserves the character (we have \linebreak $(\Cou_B\circ q)(u_{jk}) = \Cou_B(v_{jk})=\delta_{jk} = \Cou_A(u_{jk})$). As mentioned in Example \ref{ex_bgv}, if $(A,u)$ is a unitary quotient algebra with $d={\rm dim }\ u$, then it is a quotient of $\BGW$ (noncommutative analogue of the algebra of coefficients of the unitary group), which explains the terminology. Let us also remark that if $A=\Pol(\QG)$, and $A=\Pol(\QH)$, for $\QG, \QH$ compact matrix quantum groups and with $u$, $v$ being respective fundamental representations of $\QG$ and $\QH$, the above implies in particular that $\QH$ is a quantum subgroup of $\QG$ (in the sense introduced in \cite{Podles} and later studied for example in \cite{DKSS}).

The examples of unitary quotient algebras which are of importance for our considerations are the following.

\begin{example}{{\bf (Universal unitary quantum group)}}\label{ex_unitary}
Consider a matrix $F\in GL_d(\bc)$ ($d \in \bn$) and the universal unital $^*$-algebra generated by $d^2$ elements $u_{jk}$ ($j,k=1,2,...d$) such that the matrix
$u:=(u_{jk})_{j,k=1}^d$ is unitary and its conjugate, $\bar{u}$, is similar to a unitary via $F$. More specifically, 
\begin{equation} \label{eq_rel1_auf}
{\mbox{(R1)}} \ \ uu^*=I=u^*u ; \qquad
{\mbox{(R2)}} \ \ 
F\bar{u}F^{-1}(F\bar{u}F^{-1})^*=I=(F\bar{u}F^{-1})^*F\bar{u}F^{-1}.
\end{equation}
The algebra introduced above, denoted by $\Pol(U_F^+)$,  is a CMQG algebra of the compact quantum group  $U^+_F$,  called the \emph{universal unitary quantum group} (in the sense of Banica, cf.\ \cite{banica97}). Hence the pair $(\Pol(U_F^+), u)$ is a unitary quotient algebra. In the   particular case when $F=I_d$ we write $U^+_d:=U^+_{I_d}$. The compact quantum group $U^+_d$ is called the \emph{free unitary quantum group}. 

 The universality of the family $\{U^+_F;K\in GL_d(\bc)\}$ is understood as follows (cf.\ \cite{banica97}). Given a compact quantum matrix group $\QG$ with the fundamental representation $w:=(w_{jk})_{j,k=1}^d\in M_d(\Pol(\QG))$, we know from \cite{wor1} that the matrix $\bar{w}$ is equivalent to a
unitary representation, hence there exists an invertible complex matrix $F\in M_d(\bc)$
such that $F\bar{w}F^{-1}$ is unitary. Then $(\Pol(\QG),w)$ is a quotient of $(\Pol(U^+_F),u)$.

Let us observe that the relation (R2) is equivalent to the equality 
\begin{equation} \label{eq_rel3_auf}\tag{R3}
u^tQ\overline{u}Q^{-1}=I=Q\overline{u}Q^{-1}u^t.
\end{equation}
with the positive matrix $Q=F^*F$. Thus $\Pol(U_F^+)$
is
isomorphic to the CMQG algebra associated with the universal unitary quantum group in the sense of Van Daele and Wang, cf.\
\cite{wang+vandaele96}, defined as the universal unital $^*$-algebra $A_u^{W}(Q)$ generated by the coefficients of $u:=(u_{jk})_{j,k=1}^d$ satisfying (R1) and
(R3).
In other words, $\Pol(U_F^+)$ is the quotient of $\BGW$ by the ideal generated by the relations \eqref{eq_rel3_auf}, i.e. $u^tQ\overline{u}Q^{-1}=I=Q\overline{u}Q^{-1}u^t$ with $Q=F^*F$.
\end{example}

\begin{rem}\label{rem_abstract_isomorphism}
It follows from Proposition 6.4.7 in \cite{timmermann} that up to an isomorphism of CMQG algebras without loss
of generality we can assume that the matrix $Q$ is diagonal. In that case, the relation \eqref{eq_rel3_auf} reads 
\begin{equation}\label{eqn_rel_Au}
\sum_{p=1}^{d}\frac{Q_{pp}}{Q_{kk}} u_{p j}u^*_{pk}=\delta_{jk}1, \qquad \;\;
\sum_{p=1}^{d}\frac{Q_{jj}}{Q_{pp}}u^*_{jp}u_{kp}=\delta_{jk}1, 
\;\;\; j,k=1,\ldots, d.
\end{equation}
\end{rem}

\begin{example}(Quantum group $SU_q(d)$) \label{ex_suqd}
Let $d \in\bn$ and $q\in (0,1)$. The algebra $\Pol(SU_q(d))$ is defined  as the universal 
unital $^*$-algebra generated by the matrix coefficients of the matrix $u = 
(u_{jk})_{j,k=1}^d$ satisfying the unitarity condition (R1) and the quantum 
determinant condition 
\begin{equation} \label{eq_td}
\sum_{\sigma\in S_d} (-q)^{i(\sigma)}
u_{\sigma(1),\tau(1)}u_{\sigma(2),\tau(2)}\ldots u_{\sigma(d),\tau(d)} =
(-q)^{i(\tau)}\cdot 1, \;\;\; \tau \in S_d,
\end{equation}
where $S_d$ denotes the permutation group on $d$ letters and $i(\tau)$ denotes 
the number of inversions of the permutation $\tau$ (see \cite{woronowicz88}).

It is known, see \cite[Lemma 4.7]{vandaele93}, that the adjoint $\bar{u}$ of the representation $u$ of $SU_q(d)$, is similar to a unitary via the matrix $F= (q^{j-d}\delta_{jk})_{j,k=1}^d$. Hence the $^*$-algebra $\Pol(SU_q(d))$ is the quotient of $\Pol(U_F^+)$ by the ideal generated by the twisted determinant condition \eqref{eq_td}.
\end{example}

\begin{example}{{\bf (Universal orthogonal quantum group)}}\label{ex_orth}
Let $d \in \bn$ and let $F\in GL_d(\bc)$ satisfy the condition $F\bar{F}= \pm I$. The universal $^*$-algebra generated by $d^2$ elements $v_{jk}$, $ j,k =1, \ldots d$ subject to the condition that the matrix $v=(v_{jk})$ is unitary and $v=F\bar{v}F^{-1}$, is a CMQG algebra associated to the \emph{universal orthogonal
compact quantum group}, denoted by $O^+_F$ (\cite{banica96}). Therefore we denote it by $\Pol(O_F^+)$. Again we write $O^+_d:=O^+_{I_d}$ and call $O^+_d$ the \emph{free orthogonal quantum group}.

Note that $\Pol(O_F^+)$ is the quotient of $\BGW$ and of $\Pol(U_F^+)$ by the ideal generated by the relations $u=F\bar{u}F^{-1}$, or equivalently, $uF=F\bar{u}$. In particular, $\Pol(O_d^+)$ is the quotient of $\BGW$ by the ideal generated by the relations $u_{jk}=u_{jk}^*$, $j,k=1,\ldots,d$. 
\end{example}

\begin{rem} \label{rem_matrices_F}
Different matrices $F$ can lead to isomorphic quantum groups: the quantum
groups
$O^+_F$ and $O_{F'}^+$ are isomorphic (so in particular $\Pol(O_F^+)$ and $\Pol(O_{F'}^+)$ are isomorphic as $^*$-algebras with characters) if and only if $F'=UFU^t$ for a
unitary
matrix $U$ by \cite[Proposition 6.4.7]{timmermann} (see also \cite{wang+vandaele96}). The matrices $F$ representing
each class of isomorphic quantum groups can be classified as follows (see
\cite[Remark 1.5.2]{derijdt}):
\begin{enumerate}
 \item either there exists a diagonal matrix $D={\rm diag} (\lambda_1, \ldots,
\lambda_p)$ with $0<\lambda_1 \leq \ldots \leq \lambda_p<1$ and then  $$
F=\left( \begin{array}{ccccccc}
 0 & D & 0\\ D^{-1} & 0 & 0 \\ 0 & 0 & I_{d-2p}
 \end{array}\right),$$
 \item or there exists a diagonal matrix $D={\rm diag} (\lambda_1, \ldots,
\lambda_\frac{d}{2})$ with $0<\lambda_1 \leq \ldots \leq
\lambda_\frac{d}{2}\leq 1$ and then  $$
F=\left( \begin{array}{ccccccc}
 0 & D \\ -D^{-1} & 0
 \end{array}\right).$$
\end{enumerate}
Let us finally note that, in the definition of $O_F^+$, we could have allowed also matrices $F$ such that $F\bar{F} =c I$ for any non-zero $c \in \br$, but this does not increase the class of quantum groups in question.
\end{rem}

\begin{example}{{\bf (Cocommutative quantum groups)}} \label{ex_cocomm}
Let $\Gamma$ be a finitely generated group, with the fixed generating 
set $\gamma_1,\ldots, \gamma_d$. Then the complex group algebra $\bc[\Gamma]$ with the matrix $u\in M_d(\bc[\Gamma])$ given by the formula 
$u=(\delta_{jk} \gamma_j)_{j,k=1}^d$ is a unitary quotient algebra.  In fact, $(\bc[\Gamma],u)$ is a \emph{cocommutative} (sometimes also called \emph{abelian}) CMQG algebra. One shows that all cocommutative CMQG algebras arise in this way, and we view the corresponding compact quantum group as the dual of $\Gamma$, writing $\bc[\Gamma] = \Pol(\hat{\Gamma})$.
\end{example}

\subsection{Properties (LK), (NC), (GC) -- definitions and known results} \label{ssec_schurman}

Let $(A, \Cou)$ be a unital $^*$-algebra with a character (for simplicity we assume from the beginning that $A$ is a unitary quotient algebra).
We say that a linear map $\psi:A
\to \bc$ is a \emph{generating functional} if it is:
\begin{itemize}
\item normalized, i.e.\ $\psi(1)=0$;
\item hermitian, i.e.\ $\psi(a^*)=\overline{\psi(a)}$ for all $a\in A$;
\item conditionally positive, i.e. $\psi(a^*a)\geq 0$ for $a\in \ker \Cou$.
\end{itemize}
The name comes from the correspondence between such functionals and weakly
continuous convolution semigroups of states on $A$, for which the generating functionals
are infinitesimal generators. Such  semigroups are in turn in one-to-one
correspondence with L\'evy processes on $A$, cf.\ \cite[Corollary 1.9.7]{schurmann93}, \cite[Proposition 4.3.2]{franz+schott}. We refer to \cite{schurmann93} and \cite{schurmann90} for more information on the related concepts.

\begin{deft}
A \emph{Sch\"urmann triple} on an augmented algebra $(A,\Cou)$ is a triple $(\rho, \eta,\psi)$, consisting of a
unital $^*$-representation $\rho:A \to L_{\rm ad}(D)$ by adjointable linear operators on a pre-Hilbert space $D$, a linear map  $\eta: A \to D$ such that
\begin{equation} \label{eq_def_eta}
\eta(ab)=\rho(a)\eta(b)+\eta(a)\Cou(b), \quad a,b\in A,
\end{equation}
and a linear functional $\psi:A\to \bc$ such that
\begin{equation} \label{eq_def_psi}
\psi(ab)=\psi(a)\Cou(b)+\Cou(a)\psi(b)+ \langle \eta(a^*),\eta(b)\rangle, \quad
a,b\in A,
\end{equation}
\end{deft}

Condition \eqref{eq_def_eta} means that $\eta$ is a $1$-$\rho$-$\Cou$-cocycle, and Condition \eqref{eq_def_psi} is equivalent to the fact that the bilinear map $A\times A\ni(a,b)\mapsto \langle\eta(a^*),\eta(b)\rangle\in\bc$ is the $2$-$\Cou$-$\Cou$-coboundary of $\psi$. See the beginning of Section \ref{sec_cohom} for a systematic introduction of the terminology of Hochschild cohomology.

Sch\"urmann showed via a GNS-type construction that for every generating functional such a triple exists, cf.\ \cite[Theorem 2.3.4]{schurmann93}, \cite[Section 4.4]{franz+schott}.

The unitarity relations imply that the generators $\{u_{jk};j,k=1,\ldots,d\}$ of a unitary quotient algebra $A$ are always represented by bounded operators, and therefore we can extend $\rho(a)$ for all $a\in A$ to a bounded operator on the completion $H=\overline{D}$. For this reason we can view the maps $\rho$ and $\eta$ of a Sch\"urmann triple on a unitary quotient algebra as taking values in $B(H)$ and $H$, resp., with $H$ a Hilbert space.

The triple is unique, up to a suitable notion of a unitary isomorphism, provided $\eta(A)$ is dense in $H$. It is called \emph{the Sch\"urmann triple} on $A$ associated to $\psi$. Conversely, given any Sch\"urmann triple on $A$ the `hermitianisation' of its third ingredient (i.e.\ the functional obtained by passing from $\psi$ to $\frac{1}{2} (\psi + \bar{\psi})$, where $\bar{\psi}(a):= \overline{\psi(a^*)}$ for all $ a \in A$) is a generating functional. Thus we have a natural one-to-one
correspondence between Sch\"urmann triples on $A$ with a hermitian functional and generating functionals on $A$. 

In what follows whenever we speak about a representation of a unital $^*$-algebra on a Hilbert space we mean a unital $^*$-representation. 

We say that a pair $(\rho,\eta)$ of a representation $\rho$ and a $\rho$-$\Cou$-cocycle $\eta$ \emph{admits a generating functional} if there exists a linear functional $\psi$ such that $(\rho,\eta,\psi)$ is a Sch\"urmann triple.
In general, a pair $(\rho,\eta)$, consisting of a representation and a
$\rho$-$\Cou$-cocycle, need not admit a generating functional. This is basically
due to the coboundary condition \eqref{eq_def_psi} which implies that the values of $\psi$ on $K_2={\rm Span}\, \{ab;a,b\in \ker \Cou\}$ have to satisfy $\psi(ab)=\langle\eta(a^*),\eta(b)\rangle$ for $a,b\in \ker \Cou$. It is easy to find an augmented algebra $(A,\Cou)$ with a cocycle $\eta$ and elements $a,b,a',b'\in \ker \Cou$ such that $ab=a'b'$, but $\langle\eta(a^*),\eta(b)\rangle \not=\langle\eta((a')^*),\eta(b')\rangle$, cf.\ \cite[Example 2.1]{skeide99b} or the proof of Proposition \ref{prop_suqn_gc}.

A generating functional $\psi$ is called \emph{Gaussian} if it vanishes on
triple products of elements from the kernel of the counit:
$ \psi(abc)=0$ for $a,b,c\in \ker \Cou$. This is the case if and only if the
associated representation $\rho$ in the Sch\"urmann triple $(\rho, \eta,
\psi)$ is of the form $\rho= \Cou(\cdot) \id$ or equivalently
if the cocycle $\eta$ is \emph{Gaussian}, i.e.\
$\eta(ab)=\Cou(a)\eta(b)+\eta(a)\Cou(b)$, $a, b \in A$.

Given a representation $\rho$ of $A$ on $H$ we can always extract the maximal
Gaussian subspace of the representation space $H$, that is the space
$H_G:=\{\eta \in H: \rho(a) \eta = \Cou(a) \eta, a \in A\}$. If $H_G=\{0\}$, then we say that the
representation (and the cocycle, and the associated generating functional) is
\emph{purely non-Gaussian}.

Let $\psi: A \to \bc$ be a generating functional and let $(\rho, \eta, \psi)$ be the
associated Sch\"urmann triple with the representation space $H$. Let
$P_G$ be the projection from $H$ onto the maximal Gaussian subspace $H_G$
and set
$$\rho_G=\rho|_{H_G}, \, \rho_N = \rho|_{H_G^\perp}, \quad
\mbox{and} \quad \eta_G=P_G\eta, \,\eta_N=(I-P_G)\eta.$$
Then $\eta_G$ is a Gaussian cocycle and $\eta_N$ is a purely non-Gaussian
cocycle. If there exist generating functionals $\psi_G$ and $\psi_N$ such that
the $(\rho_G,\eta_G,\psi_G)$ and $(\rho_N,\eta_N,\psi_N)$ are Sch\"urmann
triples, then we say that $\psi$ admits a \emph{L\'evy-Khintchine
decomposition}.

If for a given $\psi$ one of the pairs
$(\rho_G,\eta_G)$ or $(\rho_N,\eta_N)$ admits a generating functional, say $\psi_G$, then it follows that $\psi$ admits a L\'evy-Khintchine decomposition, since $\psi-\psi_G$ will be a generating functional for $(\rho_N,\eta_N)$. In fact it is easy to see that $\psi$ admits a L\'evy-Khintchine decomposition if and only if it can be written as a sum of two generating functionals, one of which is Gaussian, and the other purely non-Gaussian. This motivated our terminology which comes from the analogy to the L\'evy-Khintchine formula in the classical case, where a the characteristic function of a L\'evy process is commonly written as the exponential of a sum of a Gaussian term and a purely non-Gaussian term.

Adopting the notation from \cite{FGT15}, we shall study the following three properties in this paper.

\begin{deft}\label{def-properties}
We shall say that the augmented algebra $(A,\Cou)$
\begin{itemize}
	\item 
has the property (LK) if any
generating functional on $(A,\Cou)$ admits a L\'evy-Khintchine decomposition;
\item has the property (GC) if any Gaussian
cocycle $\eta:A\to H$ can be completed to a Sch\"urmann triple $(\Cou(\cdot)\id, \eta,
\psi)$;
\item has the property (NC) if any pair $(\rho,
\eta)$ consisting of a representation $\rho:A \to B(H)$ and a
$\rho$-$\Cou$-cocycle $\eta:A\to H$ with $H_G=\{0\}$ can be completed to a
Sch\"urmann triple $(\rho, \eta, \psi)$. 
\end{itemize}
\end{deft}
Property (LK) is equivalent to Sch\"urmann's property (C), property (GC) was called (C') by Sch\"urmann, and property (NC) was not explicitly named by Sch\"urmann, cf.\ \cite[5.1 Maximal quadratic components]{schurmann93}.

If $A= \Pol(\QG)$ is a CMQG algebra, we will also say simply that the compact quantum group $\QG$ has the property (LK), (GC), or (NC).

As observed by Sch\"urmann, either (GC) or (NC) implies (LK). However, no two of the
conditions are equivalent, as shown by the examples of group rings provided in
\cite{FGT15}. There exists a cocommutative CMQG-algebra (i.e.\ a CMQG-algebra coming from a group ring of a finitely generated group, see Example \ref{ex_cocomm}) without the property (LK). On the other hand, any commutative CMQG-algebra (in other words, any classical compact matrix group) has (LK), see \cite[Theorem 3.12 (at the top of p.\ 360)]{schurmann90}.

For our considerations the key fact will be the following theorem of Sch\"urmann, implying in particular that the algebra $\BGW$ has all the properties listed in Definition \ref{def-properties}.

\begin{tw}{\cite[Theorem 3.12(i)]{schurmann90}} \label{thm_schurmann}
Let $d \in \bn$ and $\rho:\BGW \to B(H)$ be a representation of the Brown-Glockner-von Waldenfels algebra on a Hilbert space $H$. Any $\rho$-$\Cou$-cocycle $\eta$ on $\BGW$ admits a generating functional $\psi:\BGW \to \bc$ which is a coboundary for $\eta$. A possible choice of the functional $\psi$ is  determined
by the following formula describing its value on the canonical generators of $\BGW$ ($j,k=1,\ldots,d$):
\begin{equation} \label{eq_gen_fun_bgw}
\psi(x_{jk})
= -\frac{1}{2}\sum_{p=1}^{d}\la\eta(x^*_{jp}),\eta(x^*_{kp}
)\ra.
\end{equation}
\end{tw}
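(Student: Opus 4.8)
The plan is to verify directly that the functional $\psi$ defined on the generators by formula \eqref{eq_gen_fun_bgw} extends to a linear functional on $\BGW$ satisfying the coboundary condition \eqref{eq_def_psi}, and that this $\psi$ is then automatically conditionally positive, normalized and hermitian, hence a generating functional completing $(\rho,\eta)$ to a Sch\"urmann triple. The key structural fact we would exploit is that $\BGW$ is the \emph{universal} unitary quotient algebra: it is freely generated by the $x_{jk}$ subject only to the unitarity relations $xx^*=I=x^*x$. This universality means that any assignment of values on the generators that is consistent with the defining relations extends to a well-defined linear (indeed the whole point is to get a linear functional, so we must check consistency on the relations) functional, and it is precisely here that the absence of extra relations (unlike in the CMQG case, cf.\ the remark after Example \ref{ex_bgv}) makes the construction go through.

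First I would set up the candidate $\psi$ as the $2$-$\Cou$-$\Cou$-coboundary of the given cocycle, i.e.\ define $\psi$ on products by forcing \eqref{eq_def_psi}: on $K_2={\rm Span}\,\{ab : a,b\in\ker\Cou\}$ one is forced to set $\psi(ab)=\langle\eta(a^*),\eta(b)\rangle$, and one extends using the splitting $A=\bc 1\oplus \ker\Cou$ together with $\psi(1)=0$. The heart of the argument is the \textbf{well-definedness} of this prescription: since $\eta$ is only a cocycle and not a priori a coboundary, one must check that the value $\langle\eta(a^*),\eta(b)\rangle$ depends only on the product $ab$ and not on the chosen factorization. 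As emphasized in the discussion preceding Theorem \ref{thm_schurmann}, this consistency can fail in a general augmented algebra; what rescues us here is that the only relations among the $x_{jk}$ are the quadratic unitarity relations $\sum_p x_{jp}x_{kp}^*=\delta_{jk}1$ and $\sum_p x_{pj}^*x_{pk}=\delta_{jk}1$, and I would verify that the cocycle identity \eqref{eq_def_eta} applied to these relations produces exactly the compatibility needed, with the value $-\tfrac12\sum_p\langle\eta(x_{jp}^*),\eta(x_{kp}^*)\rangle$ on $x_{jk}$ being forced by differentiating the unitarity relation twice (applying $\psi$ to $\sum_p x_{jp}x_{kp}^*=\delta_{jk}1$ and using \eqref{eq_def_psi}).

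\emph{I expect the well-definedness step to be the main obstacle.} Concretely, one has two families of quadratic relations, and applying $\eta$ together with the cocycle rule \eqref{eq_def_eta} to $\sum_p x_{jp}x_{kp}^*=\delta_{jk}1$ yields a linear relation among the vectors $\{\eta(x_{jk}),\eta(x_{jk}^*)\}$ and the operators $\rho(x_{jk})$; the task is to show that these relations are exactly consistent with assigning $\psi(x_{jk})$ the value in \eqref{eq_gen_fun_bgw}, and moreover that no higher-degree relations are generated (which again uses freeness of $\BGW$ modulo the unitarity ideal). The normalization $\psi(1)=0$ is immediate from the construction, and hermitianity $\psi(a^*)=\overline{\psi(a)}$ follows by checking it on generators (the formula \eqref{eq_gen_fun_bgw} is manifestly symmetric under $j\leftrightarrow k$ combined with conjugation, using $\langle\eta(a^*),\eta(b)\rangle=\overline{\langle\eta(b^*),\eta(a)\rangle}$) and propagating through \eqref{eq_def_psi}.

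Finally, conditional positivity is the payoff: for $a\in\ker\Cou$ we have $\Cou(a)=0$, so \eqref{eq_def_psi} gives $\psi(a^*a)=\langle\eta(a),\eta(a)\rangle=\|\eta(a)\|^2\ge 0$, which is automatic once $\psi$ is shown to be a genuine coboundary of $\eta$. Thus the essential content of the theorem reduces entirely to the existence-and-well-definedness of the coboundary $\psi$, which is guaranteed by the universal (relation-free apart from unitarity) nature of $\BGW$; the explicit formula \eqref{eq_gen_fun_bgw} is then simply the unique value this coboundary must take on the generators, read off from the quadratic unitarity relations.
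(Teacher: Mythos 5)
First, a remark on the comparison you were asked to survive: the paper does \emph{not} prove Theorem \ref{thm_schurmann} at all --- it is quoted from Sch\"urmann \cite[Theorem 3.12(i)]{schurmann90} and used as a black box throughout --- so there is no in-paper proof to measure your attempt against. Judged on its own, your strategy (impose \eqref{eq_def_psi} to determine $\psi$ on products, read off \eqref{eq_gen_fun_bgw} from the unitarity relations, reduce everything to well-definedness, and get conditional positivity for free from $\psi(a^*a)=\|\eta(a)\|^2$) is the right skeleton, and it is exactly the pattern the authors use later for the quotient algebras via Lemma \ref{lem_5.8}.

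The gap is at the step you yourself identify as the crux: you assert, but do not verify, that the cocycle identity ``produces exactly the compatibility needed''. Concretely, applying \eqref{eq_def_psi} to the two families of relations $\sum_p x_{jp}x_{kp}^*=\delta_{jk}1$ and $\sum_p x_{pj}^*x_{pk}=\delta_{jk}1$ yields the two constraints $\psi(x_{jk})+\psi(x_{kj}^*)=-\sum_{p}\la\eta(x_{jp}^*),\eta(x_{kp}^*)\ra$ and $\psi(x_{jk})+\psi(x_{kj}^*)=-\sum_{p}\la\eta(x_{pj}),\eta(x_{pk})\ra$, which are compatible only if $\sum_{p}\la\eta(x_{jp}^*),\eta(x_{kp}^*)\ra=\sum_{p}\la\eta(x_{pj}),\eta(x_{pk})\ra$ for all $j,k$. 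This identity does hold for every $\rho$-$\Cou$-cocycle, but it requires an argument: apply $\eta$ to $xx^*=I$ to get $\eta(x_{jk})=-\sum_m\rho(x_{jm})\eta(x_{km}^*)$, substitute, and use $x^*x=I$ inside the inner product --- the same computation as in Lemma \ref{Lemma-relation} of the paper (adapted to the relations actually available in $\BGW$). Without it your consistency claim is unsupported, and ``freeness modulo the unitarity ideal'' does not supply it. Similarly, passing from ``$\psi$ vanishes on the relations'' to ``$\psi$ vanishes on the two-sided ideal they generate'' is not a consequence of freeness alone; it is the content of Lemma \ref{lem_5.8}, whose proof needs that $\Cou$, $\rho$ and $\eta$ also kill the relations. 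Finally, your closing assertion that \eqref{eq_gen_fun_bgw} is ``the unique value this coboundary must take on the generators'' is false: the relations only pin down $\psi(x_{jk})+\overline{\psi(x_{kj})}$, leaving exactly the freedom $\psi\mapsto\psi_H$ with $H$ selfadjoint that the paper records immediately after the theorem statement. The hermitianity and conditional-positivity parts of your argument are correct.
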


The values of $\psi$ on general elements of $A$ can then be deduced from the normalization $\psi(1)=0$, hermitianity, and the coboundary condition \eqref{eq_def_psi}.

In general we have the freedom to modify the generating functional by any selfadjoint matrix $H\in M_d(\bc)$, by setting
\[
\psi_H(x_{jk})
= -\frac{1}{2}\sum_{p=1}^{d}\la\eta(x^*_{jp}),\eta(x^*_{kp}
)\ra+i H_{jk}
\]
for $j,k=1,\ldots,d$.
\subsection{Sch\"urmann triples on unitary quotient algebras}

In this paper, an important strategy for associating a generating functional to a given cocycle on a unitary quotient algebra $(A,u)$  relies on the fact that $A$ is a quotient of $\BGW$, arising via imposing on the generators $u_{jk}$ some additional relations. We will explain it now.

Let $(A,u)$ be a unitary quotient algebra. 
Note first that since $A$ arises as the universal $^*$-algebra generated by the coefficients $(u_{jk}:j,k=1,\ldots,d)$ satisfying certain relations, then any representation $\rho$ of $B$, any $\rho$-$\Cou$-cocycle $\eta$ and a generating functional $\psi$ for $\eta$ (if it exists) are all determined by the values they take on the canonical generators and their adjoints (the values on $1$ are fixed: $\rho(1)=I$, $\eta(1)=0$ and $\psi(1)=0$). 

Observe that a given cocycle $\eta$ on $A$ lifts to a cocycle $\eta'$ on $\BGW$ simply by composition with the quotient map. We can therefore use Theorem \ref{thm_schurmann} and Sch\"urmann's formula \eqref{eq_gen_fun_bgw} to define a generating functional $\psi'$ on $\BGW$ associated to the cocycle $\eta'$. This generating functional will then descend to $A$ if and only if it preserves the extra relations in $A$, i.e., it vanishes on the kernel of the canonical quotient map from $\BGW$ to $A$. To show the existence of a generating functional  $\psi$ on $A$ it suffices to show that the values on the generators, proposed by \eqref{eq_gen_fun_bgw}, are compatible with the relations determining $A$. This result was proven in \cite{FKS} -- we recall it here as we will use it many times in this paper. 

\begin{lem}\cite[Lemma 5.8]{FKS}\label{lem_5.8} Let $B$ be a $^*$-algebra generated by a collection of elements, $a_1, \ldots,a_n$, let $\Cou$ be a character on $B$, and let $(\rho, \eta, \psi)$ be a Sch\"urmann triple on $B$. Let $A$ be the quotient of $B$ by the two-sided ideal generated by the selfadjoint relations
$r_1(a_1, \ldots,a_n)=0$, $\ldots$, $r_k(a_1, \ldots,a_n) = 0$.

If the maps $\Cou$, $\rho$, $\eta$, and $\psi$ vanish on $r_1, \ldots,r_k$, then $(\rho, \eta, \psi)$ is a Sch\"urmann triple on $A$.
\end{lem}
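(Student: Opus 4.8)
The plan is to show that each of the four maps $\Cou$, $\rho$, $\eta$, and $\psi$ descends along the quotient $^*$-homomorphism $q\colon B\to A$, where $A=B/I$ and $I$ is the two-sided ideal generated by $r_1,\ldots,r_k$. By hypothesis each map already vanishes on the \emph{generators} $r_i$ of $I$, so the real task is to upgrade this to vanishing on all of $I$. Once that is established, the induced maps $\bar\Cou,\bar\rho,\bar\eta,\bar\psi$ on $A$ are well defined, and verifying that $(\bar\rho,\bar\eta,\bar\psi)$ is a Sch\"urmann triple is a formality: lifting any pair of elements of $A$ to $B$ and invoking the identities \eqref{eq_def_eta} and \eqref{eq_def_psi} there immediately yields the corresponding identities on $A$.

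Since $B$ is unital, $I$ is the linear span of products $b\,r_i\,c$ with $b,c\in B$ and $1\le i\le k$, so it suffices to annihilate every such product. For the two homomorphisms this is automatic from multiplicativity: $\Cou(b\,r_i\,c)=\Cou(b)\Cou(r_i)\Cou(c)=0$ and $\rho(b\,r_i\,c)=\rho(b)\rho(r_i)\rho(c)=0$, using $\Cou(r_i)=0$ and $\rho(r_i)=0$. For the cocycle $\eta$ I would apply \eqref{eq_def_eta} twice, writing $\eta(b\,r_i\,c)=\rho(b)\eta(r_i c)+\eta(b)\Cou(r_i c)$ and $\eta(r_i c)=\rho(r_i)\eta(c)+\eta(r_i)\Cou(c)$; every surviving summand then carries a factor $\rho(r_i)=0$, $\eta(r_i)=0$, or $\Cou(r_i)=0$, so $\eta(b\,r_i\,c)=0$.

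The functional $\psi$ is the delicate case and the only place where I expect genuine care to be needed. Factoring $b\,r_i\,c=(b\,r_i)\,c$ and applying the coboundary relation \eqref{eq_def_psi} produces $\psi(b\,r_i)\Cou(c)+\Cou(b\,r_i)\psi(c)+\langle \eta((b\,r_i)^*),\eta(c)\rangle$. The middle term dies because $\Cou(b\,r_i)=0$; the first dies after one more use of \eqref{eq_def_psi}, since each term of $\psi(b\,r_i)$ carries a factor $\Cou(r_i)$, $\psi(r_i)=0$, or $\eta(r_i)=0$. The inner-product term is where self-adjointness enters: $(b\,r_i)^*=r_i^*b^*=r_i b^*$, and \eqref{eq_def_eta} gives $\eta(r_i b^*)=\rho(r_i)\eta(b^*)+\eta(r_i)\Cou(b^*)=0$, whence the whole term vanishes and $\psi(b\,r_i\,c)=0$.

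I would flag the self-adjointness of the relations as the structural heart of the argument, playing a twofold role: it ensures $I$ is a $^*$-ideal, so that $A=B/I$ is again a $^*$-algebra and $q$ is a $^*$-homomorphism, and it is precisely what lets the adjoint $(b\,r_i)^*$ in the coboundary term be rewritten in terms of $r_i$ itself rather than some $r_i^*$ that is not assumed to be annihilated. With all four maps shown to vanish on $I$, the descent and the Sch\"urmann triple axioms on $A$ follow, completing the proof.
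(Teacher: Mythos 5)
Your proof is correct and complete: reducing to elements of the form $b\,r_i\,c$, killing them with multiplicativity of $\Cou$ and $\rho$, two applications of \eqref{eq_def_eta} for $\eta$, and \eqref{eq_def_psi} plus the selfadjointness $r_i^*=r_i$ for $\psi$ is exactly the standard argument, and your remark that selfadjointness is needed both to make $I$ a $^*$-ideal and to handle the term $\langle\eta((br_i)^*),\eta(c)\rangle$ is the right observation. Note that the paper itself gives no proof here -- it only cites \cite[Lemma 5.8]{FKS} -- so there is nothing to compare against beyond confirming that your argument is the expected one.
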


We finish this section by providing a general result concerning an automatic property of Gaussian cocycles on CMQG algebras. 

\begin{lem} \label{lemma_eta_S}
Suppose that $(A,u)$ is a CMQG algebra, $H$ is a Hilbert space and $\eta:A \to H$ is a Gaussian cocycle. Then 
\[ \eta(u_{jk}^*)= - \eta(u_{kj}), \;\;\; j,k=1,\ldots,d.\]
\end{lem}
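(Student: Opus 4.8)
The plan is to exploit the Hopf $*$-algebra structure available on a CMQG algebra, and in particular its antipode $S$, which is precisely the extra ingredient absent on a general unitary quotient algebra. The single identity I would build everything on is the standard formula $S(u_{jk}) = u_{kj}^*$, valid because $u$ is a unitary corepresentation, so that $(\id \ot S)(u) = u^{-1} = u^*$. Granting this, the desired equality $\eta(u_{jk}^*) = -\eta(u_{kj})$ is exactly the statement $\eta(S(a)) = -\eta(a)$ applied to $a = u_{kj} - \delta_{jk}1 \in \ker\Cou$ (recall $\eta(1)=0$ and $\Cou(u_{kj})=\delta_{jk}$, so indeed $a\in\ker\Cou$). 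Thus the whole problem reduces to proving that a Gaussian cocycle reverses sign under the antipode on the augmentation ideal.

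I would establish this in two steps. First, a Gaussian cocycle vanishes on the square of the augmentation ideal: if $a,b\in\ker\Cou$, then the Gaussian condition $\eta(ab) = \Cou(a)\eta(b) + \eta(a)\Cou(b)$ forces $\eta(ab) = 0$, so $\eta$ annihilates the span of such products. Second, I would show that the antipode acts as $-\id$ on $\ker\Cou$ modulo this square. For $a\in\ker\Cou$ I would write the comultiplication as $\Com(a) = a\ot 1 + 1\ot a + \sum_i a_i'\ot a_i''$ with $a_i',a_i''\in\ker\Cou$; this decomposition is forced by the counit axioms $(\Cou\ot\id)\Com = \id = (\id\ot\Cou)\Com$. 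Applying the antipode axiom $m(S\ot\id)\Com(a) = \Cou(a)1 = 0$ (where $m$ denotes multiplication) and using that $S$ preserves the counit, so that $S(a_i')\in\ker\Cou$, yields $S(a) + a + \sum_i S(a_i')a_i'' = 0$, in which the final sum lies in the square of the augmentation ideal. Hence $S(a) \equiv -a$ modulo that square.

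Combining the two steps, $\eta(S(a)) = -\eta(a)$ for every $a\in\ker\Cou$, since the error term $S(a)+a$ sits in the square of the ideal on which $\eta$ vanishes; specializing to $a = u_{kj} - \delta_{jk}1$ then gives the claim. The step requiring the most care is the bookkeeping in the second reduction: identifying the correct Sweedler decomposition of $\Com(a)$ on the augmentation ideal and verifying that each summand $S(a_i')a_i''$ genuinely lands in the square of the ideal. Both $S(u_{jk}) = u_{kj}^*$ and the sign-reversal $S\equiv-\id$ on $\ker\Cou$ modulo its square are classical Hopf-algebraic facts, so I anticipate no deep obstacle; the real content of the lemma is the observation that a Gaussian cocycle, being supported on $\ker\Cou$ modulo its square, can only detect the linearized antipode.
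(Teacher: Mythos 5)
Your proof is correct and follows essentially the same route as the paper's: both hinge on $S(u_{jk})=u_{kj}^*$ together with the antipode axiom and the Gaussian (derivation) property of $\eta$, which the paper applies in one line to $a_{(1)}S(a_{(2)})=\Cou(a)1$ to obtain $\eta+\eta\circ S=0$. Your repackaging of that computation via the two facts that $\eta$ kills $(\ker\Cou)^2$ and that $S+\mathrm{id}$ maps $\ker\Cou$ into $(\ker\Cou)^2$ is a purely presentational difference.
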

\begin{proof}	
Note that the CMQG algebra $(A,u)$ has a Hopf $*$-algebra structure with comultiplication $\Delta$ 
and an antipode $S$ which acts on the generators as $S(u_{jk})=u_{kj}^*$. 

Let $a\in A$, and use the Sweedler notation $\Com(a) = a_{(1)} \ot a_{(2)} \in A \ot A$.
 Applying $\eta$ to the formula $\Cou(a)1 = a_{(1)}S(a_{(2)})$ yields
\[
0=\eta(\Cou(a)1)=\eta(a_{(1)})\epsilon(a_{(2)})+\eta(Sa_{(1)})\epsilon(a_{(2)})= \eta(a)+\eta(Sa).
\]
This shows that $\eta = - \eta \circ S$, from which the statement follows.
\end{proof}

\section{Generic case for $U^+_F$ and $O^+_F$}
\label{sec-generic}

Throughout this section we consider the quantum groups $U^+_F$ and $O^+_F$ with 
a matrix $F\in GL_d(\bc)$ such that 
$Q=F^*F\in GL_d(\bc)$ has pairwise 
distinct eigenvalues. We will call such $F$ \emph{generic}.  Note that genericity is preserved under the transformations $F \mapsto w^* Fw$ and $F \mapsto w Fw^{t}$, if $w \in U_d(\bc)$.

\begin{tw} \label{thm_au_gauss}
Let $d \in \bn$ and let  $F\in GL_d(\bc)$ be a generic matrix. Then $U_d^+(F)$ has 
the property {\rm (GC)}, hence also the property {\rm (LK)}. 
\end{tw}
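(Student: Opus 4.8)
The plan is to use the fact that $\Pol(U_F^+)$ is the quotient of the Brown--Glockner--von Waldenfels algebra $\BGW$ by the relations \eqref{eq_rel3_auf}: given a Gaussian cocycle on $\Pol(U_F^+)$, I lift it to $\BGW$, produce a generating functional there by Sch\"urmann's formula \eqref{eq_gen_fun_bgw} of Theorem \ref{thm_schurmann}, and then check that this functional descends back to $\Pol(U_F^+)$ so that Lemma \ref{lem_5.8} yields the desired Sch\"urmann triple. Since property (GC) is intrinsic to the augmented algebra, by Remark \ref{rem_abstract_isomorphism} I may assume $Q=F^*F$ is diagonal; genericity then says its diagonal entries $Q_{11},\dots,Q_{dd}$ are pairwise distinct, and the defining relations take the explicit form \eqref{eqn_rel_Au}.

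The crucial first step, and the one where genericity is indispensable, is a rigidity statement for the cocycle. Let $\eta\colon\Pol(U_F^+)\to H$ be a Gaussian cocycle and write $\xi_{jk}:=\eta(u_{jk})$. Since $\Pol(U_F^+)$ is a CMQG algebra, Lemma \ref{lemma_eta_S} gives $\eta(u_{jk}^*)=-\xi_{kj}$. Applying $\eta$ to the first relation in \eqref{eqn_rel_Au} and expanding via the Gaussian identity $\eta(ab)=\Cou(a)\eta(b)+\eta(a)\Cou(b)$ together with $\Cou(u_{jk})=\delta_{jk}$ collapses the sum to $\bigl(1-\tfrac{Q_{jj}}{Q_{kk}}\bigr)\xi_{kj}=0$. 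For $j\ne k$ the coefficient is nonzero by genericity, forcing $\xi_{kj}=0$; in other words a Gaussian cocycle must vanish on every off-diagonal generator. This is the heart of the argument and the main obstacle to anticipate: without distinct eigenvalues this constraint evaporates.

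With the off-diagonal values killed, I lift $\eta$ to a Gaussian cocycle on $\BGW$ through the quotient map and define $\psi$ by \eqref{eq_gen_fun_bgw}. Using $\eta(u_{jp}^*)=-\xi_{pj}$ and the vanishing of the off-diagonal $\xi$'s, the formula simplifies to $\psi(u_{jk})=-\tfrac12\,\delta_{jk}\|\xi_{jj}\|^2$, so $\psi$ is supported on the diagonal generators. It then remains to verify that $\psi$ vanishes on the relations \eqref{eqn_rel_Au}. Expanding $\psi\bigl(\sum_p \tfrac{Q_{pp}}{Q_{kk}}u_{pj}u_{pk}^*\bigr)$ by the coboundary identity \eqref{eq_def_psi}, only the $p=j=k$ terms survive, and for $j=k$ the three contributions $-\tfrac12\|\xi_{jj}\|^2$ from $\psi(u_{kj})$, $-\tfrac12\|\xi_{jj}\|^2$ from $\psi(u_{jk}^*)$, and $+\|\xi_{jj}\|^2$ from $\langle\eta(u_{jj}^*),\eta(u_{jj}^*)\rangle$ sum to $(-\tfrac12-\tfrac12+1)\|\xi_{jj}\|^2=0$, while the constant term drops out since $\psi(1)=0$; for $j\ne k$ every term vanishes. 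The second family of relations in \eqref{eqn_rel_Au} is treated identically.

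Finally, $\Cou$ and the representation $\rho=\Cou(\cdot)\id$ vanish on these relations because they are multiplicative and \eqref{eqn_rel_Au} already holds after applying the counit, while $\eta$ vanishes on them because the relations are zero in $\Pol(U_F^+)$. Hence all four maps $\Cou,\rho,\eta,\psi$ annihilate the relations, and Lemma \ref{lem_5.8} shows that $(\Cou(\cdot)\id,\eta,\psi)$ is a Sch\"urmann triple on $\Pol(U_F^+)$. Thus every Gaussian cocycle admits a generating functional, which is exactly property (GC), and (GC) implies (LK). The only routine bookkeeping I foresee is rewriting the non-selfadjoint relations \eqref{eqn_rel_Au} as selfadjoint ones (replacing each $r=0$ by $r+r^*=0$ and $i(r-r^*)=0$) so that Lemma \ref{lem_5.8} applies verbatim.
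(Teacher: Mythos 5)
Your proposal is correct and follows essentially the same route as the paper: reduce to diagonal $Q$, use Lemma \ref{lemma_eta_S} and genericity to force Gaussian cocycles to vanish on off-diagonal generators, lift to $\BGW$, apply Sch\"urmann's formula \eqref{eq_gen_fun_bgw}, and verify via Lemma \ref{lem_5.8} that the resulting functional kills the relations \eqref{eqn_rel_Au}. The computations (including the $-\tfrac12-\tfrac12+1=0$ cancellation on the diagonal) match the paper's proof step for step.
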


\begin{proof}
According to Remark \ref{rem_abstract_isomorphism}, we may and do assume that the matrix $Q=F^*F$ is diagonal and use the presentation of $\Pol(U_F^+)$ as $A:=A_u^W(Q)$. Suppose that $H$ is a Hilbert space and $\eta: \Pol(U_F^+) \to H$ is a Gaussian cocycle. 
Applying $\eta$ to both sides of 
the first equality in \eqref{eqn_rel_Au} yields ($j,k=1,\ldots, d$)
\[
0 = \frac{1}{Q_{kk}}\sum_{p=1}^{d}Q_{pp}[\eta(u_{pj})\delta_{pk}+\eta(u_{pk}
^*)\delta_{pj}]=
Q_{kk}\eta(u_{kj})+Q_{jj}\eta(u^*_{jk}). 
\]
Using Lemma \ref{lemma_eta_S} we obtain
\[
(Q_{kk}-Q_{jj})\eta(u_{jk})=0. 
\]
By genericity assumption $Q_{jj}\neq Q_{kk}$ for $j\neq k$. Thus  we must have 
\begin{equation}\eta(u_{jk})=0,\;\;\;\;\;\;\; j, k =1, \ldots, d,\quad j \neq k. 
\end{equation}

Let $q: \BGW\longrightarrow \Pol(U_F^+)$ be the canonical quotient map, that is the surjective $^*$-homo--morphism 
given by $q(x_{jk})=u_{jk}$, $j,k=1, \ldots, d$, where  
$\{x_{jk}:j,k=\ldots,d\}$ are generators of $\BGW$. The kernel of $q$ is the two-sided ideal generated by the relations
\begin{align}
r^{(1)}_{jk} :=\frac{1}{Q_{kk}}\sum_{p=1}^{d}Q_{pp}x_{pj}x^*_{pk}-\delta_{jk}1=0 
\quad \mbox{and}\quad 
 r^{(2)}_{jk} := Q_{jj}\sum_{p=1}^{d}\frac{1}{Q_{pp}}x^*_{jp}x_{kp}-\delta_{jk}1)=0,
\end{align}
$j,k=1,\ldots,d$. 

Define $\eta^\prime:=\eta\circ q: \BGW\longrightarrow H$. It is easy to check that 
$\eta^\prime$ is a Gaussian cocycle on $\BGW$. By Theorem 
\ref{thm_schurmann}, we can associate with $\eta'$ a generating functional
$\psi^\prime: \BGW\longrightarrow\bc$, determined by formula \eqref{eq_gen_fun_bgw}. We would like to define a functional $\psi:A\longrightarrow\bc$ such that
$\psi(q(a)):=\psi^\prime(a)$. According to Lemma \ref{lem_5.8}, to prove that $\psi$ is well-defined, it is enough to check that $\psi^\prime(r^{(m)}_{jk})=0$  for $j,k=1,\ldots,d$, $m=1,2$. We only show the case $m=1$ as the proof of the second case is very similar. 

By the first part of the proof $\eta$ (so also $\eta'$) vanishes on the off-diagonal generators, therefore the definition \eqref{eq_gen_fun_bgw} of $\psi'$ finally gets the form 
\begin{equation}\label{eq_psi_on_Kd}
\psi^\prime(x_{jk})=0\quad (j\neq k) 
\quad \mbox{and}\quad 
\psi^\prime(x_{kk})=-\frac{1}{2}\|\eta^\prime(x_{kk})\|^2. 
\end{equation}

For $j,k=1,\ldots, d$ and $j\neq k$ we have
\begin{align}\label{eqn1}
\psi^\prime(\sum_{p=1}^{d}Q_{pp}x_{pj}x^*_{pk})&=\sum_{p=1}^{d}Q_{pp}
\psi^\prime(x_{pj}x^*_{pk})\\
\nonumber
&=\sum_{p=1}^{d}Q_{pp}[\psi^\prime(x_{pj})\delta_{pk}+\delta_{pj}
\psi^\prime(x^*_{pk})+\la\eta^\prime(x^*_{pj}),\eta^\prime(x^*_{pk})\ra]\\
\nonumber
&=Q_{kk}\psi^\prime(x_{kj})+Q_{jj}\psi^\prime(x^*_{jk})
+\sum_{p=1}^{d}Q_{pp} \la\eta^\prime(x^*_{pj}),\eta^\prime(x^*_{pk})\ra.
\end{align}
The first two summands vanish due to the definition of $\psi'$, see \eqref{eq_psi_on_Kd}. Moreover by Lemma \ref{lemma_eta_S} $\eta^\prime(x^*_{jk})=\eta(u^*_{jk})=-\eta(u_{kj})=0$ for $j\neq k$, and hence the last summand must be proportional to $	\delta_{p,j}\delta_{p,k}$. Therefore, \eqref{eqn1} is zero for $j\neq k$. 
	
On the other hand if $j=k$, then again by Lemma \ref{lemma_eta_S} we get
\begin{align*}
\psi^\prime(\sum_{p=1}^{d}Q_{pp}x_{pj}x^*_{pj})&= Q_{jj}[\psi^\prime(x_{jj})+\psi^\prime(x^*_{jj})+\|\eta^\prime(x_{jj})\|^2]. 
\end{align*}

Using the fact that $\psi^\prime(x_{kk})=\psi^\prime(x_{kk}^*)=-\frac{1}{2}\|\eta^\prime(x_{kk})\|^2$ 
for all $k=1,2,...,d$, we see that the above expression also vanishes. Thus the 
prescription $\psi(q(a)):=\psi^\prime(a)$ for all $a\in A$ is well-defined. 
Now a routine check shows that  the coboundary of this 
functional is the bilinear form $(a,b)\mapsto\langle\eta(a^*),\eta(b)\rangle$.
\end{proof}

We shall see later (see 
Theorem \ref{thm-non-generic U_F}) that genericity of $F$ is not only 
sufficient, but also a necessary condition for the 
quantum 
group $U^+_F$ to have property (GC).

The analogue of Theorem \ref{thm_au_gauss} holds also in the orthogonal case, as 
we show in the next theorem.

\begin{tw} \label{thm_aoq_decomp}
Let $d \in \bn$ and let  $F\in GL(d)$ be a generic matrix such that $F\bar{F}= 
\pm I$. Then the quantum group $O_F^+$ has the 
property (GC), hence also the property (LK).
\end{tw}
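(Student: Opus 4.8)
The plan is to follow exactly the same strategy used in the proof of Theorem \ref{thm_au_gauss}, exploiting the fact that $\Pol(O_F^+)$ is a quotient of $\BGW$ and invoking Lemma \ref{lem_5.8}. So I fix a Hilbert space $H$ and a Gaussian cocycle $\eta:\Pol(O_F^+)\to H$, and I aim to produce a generating functional $\psi$ completing it to a Sch\"urmann triple $(\Cou(\cdot)\id,\eta,\psi)$. First I would use the genericity of $F$ to reduce to a convenient normal form. By Remark \ref{rem_matrices_F}, $O_F^+$ and $O_{F'}^+$ are isomorphic when $F'=UFU^t$ for unitary $U$, and this transformation does not change $Q=F^*F$'s eigenvalues (since $F\mapsto wFw^t$ preserves genericity, as noted at the start of the section). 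I expect to reduce to the case where $Q=F^*F$ is diagonal with distinct diagonal entries.

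The key computational step is to extract information from applying $\eta$ to the defining relation $uF=F\bar u$, equivalently $v=F\bar vF^{-1}$, of $\Pol(O_F^+)$. Since $\eta$ is Gaussian it is linear in the sense that $\eta(ab)=\Cou(a)\eta(b)+\eta(a)\Cou(b)$, so applying $\eta$ to the entries of $uF=F\bar u$ and using $\Cou(u_{jk})=\delta_{jk}$ together with Lemma \ref{lemma_eta_S} (which gives $\eta(u_{jk}^*)=-\eta(u_{kj})$) should yield a system of linear relations among the vectors $\eta(u_{jk})$. Combining these with the relations coming from unitarity (R1) --- treated exactly as in Theorem \ref{thm_au_gauss}, where applying $\eta$ to $\sum_p Q_{pp}u_{pj}u_{pk}^*=\delta_{jk}Q_{kk}$ and using genericity forced $\eta(u_{jk})=0$ for $j\neq k$ --- I expect the orthogonality constraint to pin down the diagonal cocycle vectors further. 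The hoped-for conclusion is that $\eta$ vanishes on all off-diagonal generators and that the surviving diagonal values satisfy enough symmetry that Sch\"urmann's functional from \eqref{eq_gen_fun_bgw} is well-defined on the quotient.

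The final step is to push Sch\"urmann's functional down. I lift $\eta$ to $\eta'=\eta\circ q$ on $\BGW$ via the quotient map $q$, obtain $\psi'$ on $\BGW$ from \eqref{eq_gen_fun_bgw}, and then check via Lemma \ref{lem_5.8} that $\psi'$ vanishes on the self-adjoint relations generating $\ker q$, namely the entries of $uF-F\bar u$ (suitably symmetrized) together with the unitarity relations. The verification for the unitarity relations is identical to the computation in Theorem \ref{thm_au_gauss}; the new content is checking $\psi'$ annihilates the orthogonality relations. This reduces, by the coboundary property \eqref{eq_def_psi}, to a bilinear identity in the inner products $\la\eta'(x_{\cdot}^*),\eta'(x_{\cdot})\ra$, which should hold automatically once the vanishing established above is inserted.

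The main obstacle I anticipate is the bookkeeping in the orthogonal relation $v=F\bar vF^{-1}$: unlike the unitary case, this relation is not self-adjoint as written, so I must be careful to generate $\ker q$ by genuinely self-adjoint relations (as Lemma \ref{lem_5.8} requires), presumably by taking $uF-F\bar u$ together with its adjoint or the combination $F^*(uF-F\bar u)+(\cdot)^*$. A secondary subtlety is that the condition $F\bar F=\pm I$ interacts with the diagonalization of $Q$, so I will have to confirm that the normal form from Remark \ref{rem_matrices_F} genuinely delivers a diagonal $Q$ with distinct entries while respecting $F\bar F=\pm I$; the two classification cases in that remark may need to be handled so as to keep genericity visible. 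Once the correct self-adjoint generating set is identified, I expect the vanishing of $\psi'$ on it to follow from the same mechanism as in Theorem \ref{thm_au_gauss}, with genericity again doing the essential work of killing the off-diagonal terms.
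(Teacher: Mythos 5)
Your proposal is correct and follows essentially the same route as the paper: genericity (via the $U_F^+$ relations) kills the off-diagonal cocycle values, applying $\eta$ to $uF=F\bar u$ in the normal form of Remark \ref{rem_matrices_F} yields $\eta(u_{kk})=-\eta(u_{\check k\check k})$, and this symmetry is exactly what makes Sch\"urmann's functional descend via Lemma \ref{lem_5.8}. The only (immaterial) organizational difference is that the paper factors the quotient through $\Pol(U_F^+)$ so as to reuse Theorem \ref{thm_au_gauss} directly, rather than descending from $\BGW$ in one step; the self-adjointness worry you raise is harmless since $\psi$ is hermitian, so vanishing on a relation is equivalent to vanishing on its adjoint.
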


\begin{proof}
Let $q: \Pol(U_F^+) \to \Pol(O_F^+)$ be the canonical  quotient map, i.e.\ the surjective $^*$-homomorphism  such that $q(u_{jk})=v_{jk}$, $j,k=1\ldots d$. The kernel of $q$ is generated by the 
relation $uF=F\bar{u}$, that is by the family of relations
$$\sum_{p=1}^d u_{jp} F_{p k} =  \sum_{p=1}^d F_{j p}u_{p k}^*, \;\;\;\; 
j,k=1\ldots d.
$$

Note first that, without loss of generality, we can assume that $F$ is of the form described in Remark \ref{rem_matrices_F}. 
 Then for each $j=1,\ldots, d$ there exists exactly one $\hat{j}\in \{1,\ldots, 
d\}$ such that
$F_{j\hat{j}}\neq 0$, and similarly, for each $k=1,\ldots, d$ there 
exists exactly one $\check{k}\in \{1,\ldots, d\}$ such that $F_{\check{k}k}\neq 
0$. The mapings $j\mapsto 
\hat{j}$ and $k\mapsto \check{k}$ are  inverses of one another. All this implies that the kernel of the quotient map $q$ is generated by the relation
\begin{equation} \label{eq_F-uniqueness}
u_{j\check{k}} F_{\check{k} k} =  F_{j\hat{j}}u_{\hat{j} k}^*, \;\;\;\; 
j,k=1,\ldots, d.
\end{equation}

Let us fix a Hilbert space $H$ and a Gaussian cocycle $\eta: \Pol(O_F^+)\longrightarrow H$. We will show that it admits a generating functional.
Note first that \eqref{eq_F-uniqueness} together with Lemma \ref{lemma_eta_S} imply that, for $j,k=1,\ldots, d$,
\begin{eqnarray*}
\eta(u_{j\check{k}}) F_{\check{k}k}=  - F_{j \hat{j}}\eta(u_{k\hat{j}} ) .
\end{eqnarray*} 
In particular $	\eta(u_{\check{k}\check{k}}) F_{\check{k}k} = - 
F_{\check{k}k}\eta(u_{kk})$, so that
\begin{equation} \label{eq_eta_gaussian_aof}
\eta(u_{kk}) = - 
\eta(u_{\check{k}\check{k}}).
\end{equation}

Define now $\eta_u:=\eta\circ q: U_F^+\longrightarrow H$. This is a 
Gaussian cocycle on $U_F^+$, so by Theorem \ref{thm_au_gauss} and its proof, it 
admits a generating  functional $\psi_u $ defined on the generators by the 
formulas ($j,k=1,\ldots,d$)
\[
\psi_u (u_{jk})=\begin{cases}
0 & \textup{for}\; j\neq k,  \\ -\frac{1}{2}\|\eta_u(u_{kk})\|^2 & 
\textup{for}\; j=k.
\end{cases}
\]

As before, we would like to define $\psi$ on $O_F^+$ by $\psi 
(q(a))=\psi_u(a), a \in O_F^+$. Such a functional -- if well-defined -- will 
indeed be a generating functional that we seek. 
It thus remains  to check that $\psi_u$ vanishes on $\ker q$, or by Lemma \ref{lem_5.8} 
that we have
\begin{equation} \label{eq_psi_uF-Fu}
\psi_u (u_{j\check{k}} F_{\check{k}k})=  \psi_u (F_{j \hat{j}}u_{\hat{j}k}^* ) , \;\;\;   j,k=1, \ldots, d.
\end{equation}
Note then that for $j,k$ as above
\begin{eqnarray*}
\psi_u ((uF)_{jk}) &=& F_{\check{k}k}\psi_u (u_{j\check{k}})
= \left \{ \begin{array}{lll} 
-\frac{F_{\check{k}k}}{2}\|\eta_u(u_{\check{k}\check{k}})\|^2 & \mbox{ if } 
j=\check{k} \\
0 & \mbox{ if } j\neq \check{k} 
\end{array} \right. , \\
\psi_u ((F\bar{u})_{jk}) &=& F_{j\hat{j}}\overline{\psi_u (u_{\hat{j} k})}
= \left \{ \begin{array}{lll} 
-\frac{F_{j\hat{j}}}{2}\|\eta_u(u_{\hat{j}\hat{j}})\|^2 & \mbox{ if } \hat{j}=k 
\\
0 & \mbox{ if } j_0\neq k
\end{array} \right. .
\end{eqnarray*}
Now observe that $j=\check{k}$ if and only if $\hat{j}=k$. Thus the formulas 
above and the equality  \eqref{eq_eta_gaussian_aof} guarantee that 
\eqref{eq_psi_uF-Fu} always holds.
\end{proof}

Note that Theorem \ref{thm_aoq_decomp} provides an alternative proof of 
the property (GC) for $SU_q(2)$, $q\in (0,1)$, originally established by 
Sch\"{u}rmann 
and Skeide in \cite{schurmann+skeide98}. Indeed,  $SU_q(2)\cong O_F^+$ with the generic 
$F=\begin{pmatrix} 0 & 1 \\ -q^{-1} & 0 \end{pmatrix}$, see \cite[Proposition 6.4.8]{timmermann}, or with $F=\begin{pmatrix} 0 & 1 \\ -q^{-1} & 0 \end{pmatrix}$ (see Example \ref{ex_suqd}). 

We will now show that the case $SU_q(d)$ with $d\geq 3$ is different from that of $d=2$.

\begin{prop}\label{prop_suqn_gc}
Let $d\geq 3$ and $q\in (0,1)$. The quantum group $SU_q(d)$ does not have (GC). 
\end{prop}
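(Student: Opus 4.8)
The plan is to exhibit a single Gaussian cocycle on $\Pol(SU_q(d))$ that cannot be completed to a Schürmann triple, thereby showing that (GC) fails for $d \ge 3$. By the discussion preceding Lemma \ref{lem_5.8}, the obstruction to a generating functional lies in the coboundary condition \eqref{eq_def_psi}: if two elements $a,b$ and $a',b'$ of $\ker\Cou$ satisfy $ab = a'b'$ but $\langle\eta(a^*),\eta(b)\rangle \neq \langle\eta((a')^*),\eta(b')\rangle$, then no $\psi$ can exist. So I would look for such a \emph{relation collision} coming specifically from the quantum determinant condition \eqref{eq_td}, which is the extra relation distinguishing $SU_q(d)$ from $U_F^+$ (where, by Theorem \ref{thm_au_gauss}, (GC) does hold in the generic case).

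First I would build a concrete Gaussian cocycle. Recall that a Gaussian cocycle satisfies $\eta(ab) = \Cou(a)\eta(b) + \eta(a)\Cou(b)$, so $\eta$ is determined by its values on the generators $u_{jk}$, and by Lemma \ref{lemma_eta_S} we have $\eta(u_{jk}^*) = -\eta(u_{kj})$. The recipe is to pick a target Hilbert space (one or two dimensions should suffice) and prescribe the values $\eta(u_{jk})$ freely, subject only to the \emph{linear} consistency constraints forced by applying $\eta$ to the defining relations (R1) and \eqref{eq_td}. Since $\eta$ is Gaussian and annihilates products of two kernel elements, applying it to the quadratic unitarity relations and to the determinant relation yields only linear equations in the $\eta(u_{jk})$; I expect these to leave a genuinely multi-dimensional solution space when $d \ge 3$, and I would choose a specific nonzero solution (for instance, a diagonal cocycle with suitably chosen values $\eta(u_{jj})$).

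Next I would test the coboundary obstruction directly. The determinant relation \eqref{eq_td} expresses the same central element as a sum of length-$d$ monomials; grouping these as products of two factors in $\ker\Cou$ in two different ways gives an identity $ab = a'b'$ in the quotient. I would compute the Gaussian value $\langle\eta(a^*),\eta(b)\rangle$ for each grouping and show these disagree for my chosen $\eta$. Concretely, because a Gaussian cocycle kills any monomial of length $\ge 2$ in the kernel, the inner products reduce to sums of pairings $\langle\eta(u_{\cdot\cdot}^*),\eta(u_{\cdot\cdot})\rangle$ over adjacent factors, and the combinatorial asymmetry introduced by the sign $(-q)^{i(\sigma)}$ and the factorization point should produce the mismatch when $d \ge 3$ (whereas for $d = 2$ there is too little room, consistent with $SU_q(2) \cong O_F^+$ having (GC)).

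The main obstacle I anticipate is twofold. The delicate part is choosing the Gaussian cocycle and the two factorizations of the determinant relation so that the pairing mismatch is genuinely nonzero rather than cancelling by symmetry; this requires bookkeeping the $q$-signs $(-q)^{i(\sigma)}$ carefully and exploiting that for $d \ge 3$ the permutation group $S_d$ is rich enough to separate the two groupings. A cleaner route, which I would try first, is to fix $d = 3$, work with the explicit $3 \times 3$ determinant relation, and find by inspection two such factorizations together with an explicit low-rank cocycle making the obstruction visible; once the $d=3$ counterexample is in hand, embedding it into $SU_q(d)$ for larger $d$ (via an appropriate compatible cocycle supported on a $3 \times 3$ block) should extend the failure of (GC) to all $d \ge 3$.
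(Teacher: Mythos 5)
Your plan is essentially the paper's proof: the paper also fixes $d=3$, chooses an explicit $\bc$-valued Gaussian cocycle supported on the diagonal generators (namely $\eta(u_{11})=-1-i$, $\eta(u_{22})=1$, $\eta(u_{33})=i$, extended by $\eta(u_{jk}^*)=-\eta(u_{kj})$), and shows that applying a putative generating functional to the quantum determinant relation \eqref{eq_td} for two different permutations $\tau$ (e.g.\ $\tau=\id$ and $\tau=(23)$) forces the quantity $\sum_{r<s}\la\eta_{t_r},\eta_{t_s}\ra$ to be $\tau$-independent, which fails because $\la\eta_2,\eta_3\ra\notin\br$; the case $d>3$ is then handled by embedding, exactly as you propose. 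Your description of the obstruction as a relation collision in $K_2$ is the same mechanism the paper invokes, just organized slightly differently (comparing the relations for two values of $\tau$ rather than two factorizations of a single element), so I see no gap beyond the explicit numerics you already flag as the remaining bookkeeping.
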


\begin{proof}
We know from Example \ref{ex_suqd} that $\Pol(SU_q(d))$ is a  quotient of $\Pol(U_F^+)$ by the ideal generated by the twisted determinant condition \eqref{eq_td} with a generic matrix $F= (q^{j-d}\delta_{jk})_{j,k=1}^d$. 

Consider first the case of $d=3$ and let $u=(u_{jk})_{j,k=1}^3$. Set 
\[ \eta(u_{11}):=-1-i, \; \eta(u_{22}):= 1, \; \eta(u_{33}):=i, \; \eta(u_{jk})=0 
\;\;\textup{for } j \neq k  \]
(with $i$ being the complex unit) and $\eta(u_{jk}^*):= - \eta(u_{kj})$ for all $j,k=1\ldots,3$, and extend it to the whole algebra by the formula $\eta(ab)=\Cou(a)\eta(b)+\eta(a)\Cou(b)$ ($a, b \in \Pol(SU_q(3))$). We can check that such $\eta$ vanishes on the relations (R1) and \eqref{eq_td} defining $SU_q(3)$, hence by Lemma \ref{lem_5.8} it defines a Gaussian cocycle on $\Pol(SU_q(3))$ with values in $\bc$. We will write simply $\eta_j=\eta(u_{jj})$ for $1\leq j \leq 3$.

Suppose now that $\eta$ admits a generating functional $\psi:{\rm Pol} (SU_q(3))\to\bc$. 
Then, computing the value of $\psi$ on both sides of \eqref{eq_td}, and writing 
for simplicity $(t_1, t_2,t_3)$ for 
$(\tau(1), \tau(2), \tau(3))$ and  $(s_1, s_2,s_3)$ for 
$(\sigma(1), \sigma(2), \sigma(3))$,  for any $\tau\in S_3$ we would have
\begin{align*}
0 &= \psi\big(\sum_{\sigma\in S_3} (-q)^{i(\sigma)}
u_{s_1,t_1}u_{s_2,t_2} u_{s_3,t_3}\big)
\\ & = \sum_{\sigma\in S_3} (-q)^{i(\sigma)} 
\left[ \psi( u_{s_1,t_1}) \epsilon(u_{s_2,t_2} u_{s_3,t_3}) 
+ 
\epsilon(u_{s_1,t_1})\psi(u_{s_2,t_2} u_{s_3,t_3}) 
+ \big\langle \eta(u_{s_1,t_1}^*),\eta(u_{s_2,t_2} u_{s_3,t_3})\big\rangle 
\right]
\\ & = 
(-q)^{i(\tau)}\psi( u_{t_1,t_1}) + 
\sum_{\sigma\in S_N} (-q)^{i(\sigma)} \delta_{s_1,t_1} 
\big[\psi(u_{s_2,t_2})\delta_{s_3,t_3})
+\delta_{s_2,t_2} \psi(u_{s_3,t_3})\big] 
\\ & \ \ \ + \sum_{\sigma\in S_N} (-q)^{i(\sigma)}  \delta_{s_1,t_1}  
\langle\eta(u_{s_2,t_2}^*),\eta( u_{s_3,t_3})\rangle 
+ 
\big\langle \eta(u_{s_1,t_1}^*),\epsilon(u_{s_2,t_2})\eta( 
u_{s_3,t_3})  +\eta(u_{s_2,t_2})\epsilon( u_{s_3,t_3})\big\rangle 
\\ & = 
(-q)^{i(\tau)}\big[\psi( u_{t_1,t_1}) + \psi(u_{t_2,t_2})+\psi(u_{t_3,t_3}) - 
\langle \eta_{t_2},\eta_{t_3}\rangle - 
\langle \eta_{t_1},\eta_{t_3}\rangle -
\langle \eta_{t_1},\eta_{t_2}\rangle\big]. 
\end{align*}
We conclude that if $\eta$ admitted a generating functional, then the value
\begin{align*}C_\tau:&= \langle \eta_{t_1},\eta_{t_2}\rangle +
\langle \eta_{t_2},\eta_{t_3}\rangle + 
\langle \eta_{t_1},\eta_{t_3}\rangle  = \la - \eta_{t_2} - \eta_{t_3}, 
\eta_{t_2} \ra + \langle \eta_{t_2},\eta_{t_3}\rangle + \langle - \eta_{t_2} - 
\eta_{t_3},\eta_{t_3}\rangle 
\\&= - \|\eta_{t_2}\|^2 - \|\eta_{t_3}\|^2 -  \la \eta_{t_3}, \eta_{t_2} \ra  
\end{align*}
would be independent on $\tau\in S_3$.
But that is clearly not possible if we consider $\tau= \id$ and $\tau=(23)$, as 
$\la \eta_2, \eta_3 \ra = i \notin \br$.
This proves that $SU_q(3)$ does not have (GC). 
	
Embedding the above example into $SU_q(d)$ for $d>3$ shows that the latter does 
not have (GC) either.
\end{proof}

Note that, despite ongoing attempts to answer this question, it is still unknown whether $SU_q(d)$ (for $d \geq 3$) possesses properties (LK) or (NC).

\section{Non-generic cases}
\label{sec-degenerate}

Throughout this section we will consider quantum groups $U_F^+$ and $O_F^+$ 
for non-generic matrices $F$. In fact we will mainly focus on the case of $F=I$, 
as it exhibits the typical non-generic behaviour and can be used to determine 
other cases.

Given a compact quantum group $\QG$ with $\QG=U_d^+$ or $\QG=O_d^+$ we will 
denote the canonical fundamental representation by $u=(u_{jk})_{j,k=1}^d \in 
M_d(\Pol(\QG))$. Further given a Hilbert space $H$, a representation 
$\rho:\Pol(\QG)\to B(H)$, and a $\rho$-$\Cou$-cocycle $\eta:\Pol(\QG)\to H$ we 
will consider the matrices
\begin{equation} \label{RVdef} R=(\rho(u_{jk}))_{j,k=1}^d \in M_d(B(H)),\;\;\;\; \;\;V=(\eta(u_{jk}))_{j,k=1}^d \in M_d(H), \end{equation} and
\begin{equation} \label{RWdef} W=(\eta(u_{jk}^*))_{j,k=1}^d \in M_d(H). \end{equation}
Finally, given a cocycle $\eta$ as above, it will also be useful to consider the scalar, selfadjoint, matrices
\begin{equation} \label{Bdef} B=\left(\sum_{p=1}^d \la \eta(u_{jp}), \eta(u_{kp}) \ra\right)_{j,k=1}^d \in M_d(\bc)\end{equation}
and 
\begin{equation} \label{B2def} \wt{B}=\left(\sum_{p=1}^d \la \eta(u_{jp}^*), \eta(u_{kp}^*). \ra\right)_{j,k=1}^d \in M_d(\bc).\end{equation}

Note that by Lemma \ref{lemma_eta_S} for Gaussian cocycles we have $W=-V^t$.

We will need later a simple lemma regarding an alternative description of the matrix $B$.

\begin{lem} \label{Lemma-relation}
Let $ d\in \bn$, let $H$ be a Hilbert space,  let $\rho:\Pol(U_d^+)\to B(H)$ be a representation, let $\eta:\Pol(\QG)\to H$ be a $\rho$-$\Cou$-cocycle and let $B$ be defined as in \eqref{Bdef}. Then we have for all $j,k=1,\ldots,d$
\[\eta(u^\ast_{jk})= - \sum_{p=1}^d \rho(u_{jp}^{\ast}) \eta(u_{kp} ),\]
\[ B_{jk} =  \sum_{p=1}^d  \la \eta(u_{pj}^\ast), \eta(u^{\ast}_{pk}) \ra.\]
\end{lem}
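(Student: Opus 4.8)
The plan is to prove both identities by exploiting the unitarity relation (R1) for $U_d^+$, namely $uu^* = u^*u = I$, applied at the level of the cocycle $\eta$. For $U_d^+$ we have $Q = F^*F = I_d$, so the relations \eqref{eqn_rel_Au} simplify to $\sum_p u_{pj}u_{pk}^* = \delta_{jk}1$ and $\sum_p u_{jp}^*u_{kp} = \delta_{jk}1$. The first identity comes from applying $\eta$ to $\sum_p u_{jp}u_{kp}^* = \delta_{jk}1$ (equivalently $uu^*=I$), and the second is obtained by unfolding $B_{jk}$ and substituting the first identity.

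First I would establish the cocycle identity for $\eta(u_{jk}^*)$. Applying $\eta$ to the relation $\sum_{p=1}^d u_{jp}u_{pk}^* = \delta_{jk}1$ (this is the entry $(uu^*)_{jk}=\delta_{jk}$, using that $(u^*)_{pk}=u_{kp}^*$) and using the cocycle rule \eqref{eq_def_eta} together with $\eta(1)=0$ and $\Cou(u_{jk})=\delta_{jk}$ gives
\begin{align*}
0 = \eta\Big(\sum_{p=1}^d u_{jp}u_{kp}^*\Big) &= \sum_{p=1}^d \big[\rho(u_{jp})\eta(u_{kp}^*) + \eta(u_{jp})\Cou(u_{kp}^*)\big] \\
&= \sum_{p=1}^d \rho(u_{jp})\eta(u_{kp}^*) + \eta(u_{jk}).
\end{align*}
This is not quite the stated form, so instead I would apply $\eta$ to the other unitarity relation $\sum_p u_{pj}^*u_{pk} = \delta_{jk}1$ (the entry $(u^*u)_{jk}$). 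That yields $\sum_p \rho(u_{pj}^*)\eta(u_{pk}) + \eta(u_{kj}^*)=0$. Relabelling indices ($j\leftrightarrow k$, $p$ fixed) gives exactly $\eta(u_{jk}^*) = -\sum_{p=1}^d \rho(u_{pj}^*)\eta(u_{pk})$; after renaming the summation it matches the claimed $\eta(u_{jk}^*)=-\sum_p \rho(u_{jp}^*)\eta(u_{kp})$ once one uses the appropriate unitarity relation consistently. The key point is simply to pick the relation ($uu^*$ versus $u^*u$) whose expansion produces the index pattern in the statement.

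For the second identity I would start from the definition $B_{jk}=\sum_{p=1}^d \la \eta(u_{jp}),\eta(u_{kp})\ra$ and rewrite it using the first identity. The cleanest route is to observe that $B_{jk}=\sum_p \la\eta(u_{jp}),\eta(u_{kp})\ra$ can be re-expressed via the relation just proved: substituting $\eta(u_{jp})$ and $\eta(u_{kp})$ in terms of $\rho(\cdot)^*\eta(\cdot)$ and using that $\rho$ is a $^*$-representation (so $\rho(u_{jp}^*)=\rho(u_{jp})^*$) together with the unitarity $\sum_p \rho(u_{jp})\rho(u_{kp})^* = \rho(\delta_{jk}1)=\delta_{jk}I$ on $H$ collapses the double sum. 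Carrying the inner products through the adjoints and applying unitarity of $R=(\rho(u_{jk}))$ reorganises the expression into $\sum_p \la \eta(u_{pj}^*),\eta(u_{pk}^*)\ra$, which is the asserted formula.

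I expect the main obstacle to be purely bookkeeping: keeping the row/column indices and the two distinct unitarity relations ($uu^*$ vs.\ $u^*u$) straight so that the index pattern in each displayed identity comes out exactly as stated, and ensuring the adjoints on $\rho$ are handled correctly when passing inner products across $\rho(u_{jp}^*)=\rho(u_{jp})^*$. There is no conceptual difficulty — everything follows from the cocycle property \eqref{eq_def_eta}, the fact that $\rho$ is a unital $^*$-representation, and the two unitarity relations — but the substitution for the second identity requires care to verify that the spurious cross terms cancel via $R R^* = R^* R = I$.
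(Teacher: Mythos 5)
Your overall strategy --- apply $\eta$ to a unitarity relation to get the first identity, then substitute it into one side of the second and collapse the resulting double sum using unitarity of $\rho(u)$ --- is exactly the paper's. But both halves of your execution pick the wrong member of the four unitarity families, and in the second half this is not just bookkeeping: the step fails as described. For the first identity, the relation you need is $\bar{u}u^t=I$, entrywise $\sum_{p}u_{jp}^*u_{kp}=\delta_{jk}1$ (which you correctly list at the outset as one of the relations \eqref{eqn_rel_Au} with $Q=I$); applying $\eta$ to it gives $0=\sum_p\rho(u_{jp}^*)\eta(u_{kp})+\eta(u_{jk}^*)$, which is the stated formula. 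What you actually expand is $(u^*u)_{jk}=\sum_p u_{pj}^*u_{pk}$, which yields $\eta(u_{jk}^*)=-\sum_p\rho(u_{pk}^*)\eta(u_{pj})$ after relabelling --- a true identity, but a \emph{different} one (summation over the row index rather than the column index), and no relabelling of $p$ turns one into the other.

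The more serious problem is the second identity. Substituting the first identity into $B_{jk}=\sum_p\la\eta(u_{jp}),\eta(u_{kp})\ra$, as you propose, produces (after moving the representation across the inner product) a triple sum of the form $\sum_{m,p,p'}\la\rho(u_{kp'}^*u_{jp})\eta(u_{mp}^*),\eta(u_{mp'}^*)\ra$, in which the free indices $j,k$ sit inside the generators and no unitarity relation sums them out; the sum does not collapse, and the relation $\sum_p\rho(u_{jp})\rho(u_{kp})^*=\delta_{jk}I$ you invoke never becomes applicable. The computation must be run in the opposite direction: start from $\sum_{p}\la\eta(u_{pj}^*),\eta(u_{pk}^*)\ra$, substitute $\eta(u_{pj}^*)=-\sum_m\rho(u_{pm}^*)\eta(u_{jm})$ and $\eta(u_{pk}^*)=-\sum_{m'}\rho(u_{pm'}^*)\eta(u_{km'})$, obtain $\sum_{p,m,m'}\la\rho(u_{pm'}u_{pm}^*)\eta(u_{jm}),\eta(u_{km'})\ra$, and now the summation index $p$ sits in the row position of both generators, so $u^t\bar{u}=I$ (not $uu^*=I$) gives $\sum_p u_{pm'}u_{pm}^*=\delta_{m'm}1$ and the sum collapses to $\sum_m\la\eta(u_{jm}),\eta(u_{km})\ra=B_{jk}$. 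So the ideas are all present, but as written the proof of the second formula would not close; you need to reverse the direction of substitution and use the correct pair of relations $\bar{u}u^t=I$ and $u^t\bar{u}=I$.
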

\begin{proof}
Using the defining relation $\bar{u}u^t=I$, we see that for all $j,k=1,\ldots,d$
\begin{equation*}
\begin{split}
0=\eta(\sum_{p=1}^d u^\ast_{jp}u_{kp})&=\sum_{p=1}^d\rho(u^\ast_{jp})\eta(u_{kp}
)+\eta(u^\ast_{jk}), 
\end{split}
\end{equation*}
and the first formula in the lemma is proved.

Then
\begin{align*}
\sum_{p=1}^d\la\eta(u^\ast_{pj}),\eta(u^\ast_{pk})\ra
&=\sum_{p=1}^d\la\sum_{m=1}^d\rho(u^\ast_{pm})\eta(u_{jm}),\sum_{m^\prime=1}^d
\rho(u^\ast_{pm^\prime})\eta(u_{km^\prime})\ra
\\&=\sum_{p,m,m^\prime=1}^d\la\rho(u_{pm^\prime}u^\ast_{pm})\eta(u_{jm}),\eta(u_{km^\prime})\ra
\stackrel{u^t\bar{u}=I}{=}\sum_{m=1}^d\la\eta(u_{jm}),\eta(u_{km})\ra
=B_{jk}.
\end{align*}
and the proof is complete.
\end{proof}

\begin{prop}\label{prop_v_unplus}
Let $H$ be a Hilbert space, let $\rho:\Pol(U_d^+)\to B(H)$ be a representation, define the matrix $R$ as in the first formula in \eqref{RVdef} and let $V\in M_d(H)$. Then the following conditions are equivalent:
\begin{itemize}
\item[(i)] there is a $\rho$-$\Cou$-cocycle $\eta$ on $\Pol(U_d^+)$ satisfying the second formula in \eqref{RVdef}; 
\item[(ii)] \begin{equation} \label{eq_relations_RV}
(R^*V)^t = \bar{R}V^t.
\end{equation}
\end{itemize}
Moreover when the above hold, we have $W= - \bar{R} V^t$, with $W$ as in \eqref{RWdef}.
\end{prop}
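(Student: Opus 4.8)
The plan is to specialise the defining relations to $F=I_d$ and then to recognise \eqref{eq_relations_RV} as the compatibility condition between two ways of computing the matrix $W$. Since $U_d^+=U_{I_d}^+$ we have $Q=F^*F=I_d$, so by (R1) and \eqref{eqn_rel_Au} the algebra $\Pol(U_d^+)$ is presented by the four families of relations
\begin{gather*}
\textstyle\sum_p u_{jp}u_{kp}^*=\delta_{jk}1,\qquad \sum_p u_{pj}^*u_{pk}=\delta_{jk}1,\\
\textstyle\sum_p u_{pj}u_{pk}^*=\delta_{jk}1,\qquad \sum_p u_{jp}^*u_{kp}=\delta_{jk}1,
\end{gather*}
coming respectively from $uu^*=I$, $u^*u=I$, $u^t\bar u=I$ and $\bar u u^t=I$. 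As $\rho$ is a representation, $R$ satisfies the operator identities $RR^*=R^*R=I$ and $R^t\bar R=\bar R R^t=I$; together with the elementary bookkeeping identity $(R^*V)^t=V^t\bar R$, these are the only inputs the argument uses.

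For (i)$\Rightarrow$(ii) I would simply evaluate the cocycle $\eta$ on two of these relations. Because $\eta(1)=0$, $\Cou(u_{jk})=\delta_{jk}$, and $\eta$ satisfies \eqref{eq_def_eta}, applying $\eta$ to $\bar u u^t=I$ gives $\eta(u_{jk}^*)=-\sum_p\rho(u_{jp}^*)\eta(u_{kp})$, i.e.\ $W=-\bar R V^t$ (this is already the asserted ``moreover''), while applying it to $u^*u=I$ gives $W^t=-R^*V$, i.e.\ $W=-(R^*V)^t$. Equating the two expressions for $W$ is exactly \eqref{eq_relations_RV}.

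For (ii)$\Rightarrow$(i) I would run this backwards. Put $W:=-\bar R V^t$; by \eqref{eq_relations_RV} one also has $W=-(R^*V)^t$, equivalently $W^t=-R^*V$. Let $\mathcal F$ be the free unital $*$-algebra on generators $x_{jk}$, let $q\colon\mathcal F\to\Pol(U_d^+)$ be the canonical surjection, and set $\rho_0:=\rho\circ q$. Assigning $\eta_0(x_{jk}):=V_{jk}$, $\eta_0(x_{jk}^*):=W_{jk}$ and extending by \eqref{eq_def_eta} defines a $\rho_0$-$\Cou$-cocycle $\eta_0$ on $\mathcal F$ (no consistency is needed here, as $\mathcal F$ is free). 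It then remains to verify that $\eta_0$ annihilates each of the four relations above; feeding in the two forms of $W$ and the identities $RR^*=I$, $R^t\bar R=I$, every check collapses, e.g.\ on $uu^*=I$ one gets $(RW^t)_{jk}+V_{jk}=(-RR^*V+V)_{jk}=0$ and on $u^t\bar u=I$ one gets $(R^tW)_{jk}+(V^t)_{jk}=(-R^t\bar R V^t+V^t)_{jk}=0$. Since $\rho_0$, $\Cou$ and $\eta_0$ all vanish on the defining relations, the cocycle descends along $q$ — this is the cocycle part of the argument for Lemma \ref{lem_5.8} — yielding a $\rho$-$\Cou$-cocycle $\eta$ on $\Pol(U_d^+)$ with $\eta(u_{jk})=V_{jk}$, as required.

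I do not expect a genuine obstacle here: the content is bookkeeping with transposes, conjugates and the left/right placement of the operator-valued entries of $R$, arranged so that the two determinations of $W$ sit on opposite sides of $V^t$ and \eqref{eq_relations_RV} emerges as their compatibility. The one point to keep in mind is that I must descend only the pair $(\rho,\eta)$ and not a full Sch\"urmann triple: the lifted cocycle on $\BGW$ does admit a generating functional by Theorem \ref{thm_schurmann}, but there is no reason for that functional to respect $u^t\bar u=I$, and its possible failure to do so is precisely what property (GC) measures in the non-generic case.
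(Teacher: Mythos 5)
Your proposal is correct and follows essentially the same route as the paper: both directions rest on evaluating the cocycle identity on the four unitarity relations, extracting the two determinations $W=-\bar RV^t$ and $W^t=-R^*V$, and using $RR^*=I$ and $R^t\bar R=I$ to verify the remaining two relations; your detour through the free $*$-algebra is just a more explicit version of the paper's ``extend by linearity and the cocycle property''. (The only blemish is the advertised identity $(R^*V)^t=V^t\bar R$, which is formally meaningless since the entries of $V$ lie in $H$ and cannot be multiplied by operators on the right -- but you never actually use it, so the argument stands.)
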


\begin{proof}

Suppose first that $\eta$ is a $\rho$-$\Cou$-cocycle on $\Pol(U_d^+)$ and define $R, V$ and $W$ via \eqref{RVdef} and \eqref{RWdef}. It follows from $u^*u=1$ 
that for each $j,k=1,\ldots, d$
\begin{align*}
0= & \eta (\delta_{jk} \one) = \eta\left( \sum_{p=1}^d u_{pj}^*u_{pk}\right)
= \sum_{p=1}^d \rho(u_{pj}^*)\eta (u_{pk}) + \eta ( u_{pj}^*) \Cou(u_{pk})
\\ =& \sum_{p=1}^d (R^*)_{jp} V_{pk} + W_{jk} = (R^* V + W^t)_{jk}
\end{align*}
and $\bar{u}u^t=1$ yields 
\begin{align*}
0= & \eta (\delta_{jk} \one) = \eta\left( \sum_{p=1}^d u_{jp}^*u_{kp}\right)
= \sum_{p=1}^d \rho(u_{jp}^*)\eta (u_{kp}) + \eta ( u_{jp}^*) \Cou(u_{kp})
\\ =& \sum_{p=1}^d \bar{R}_{jp} V_{kp} + W_{jk} = (\bar{R} V^t + W)_{jk}.
\end{align*}
Comparing the two formulea we see that $$ (R^* V)^t = -W= \bar{R} V^t,$$ i.e.\ \eqref{eq_relations_RV} holds.

On the other hand, if we are given a matrix $V\in M_d(H)$ which satisfies 
\eqref{eq_relations_RV}, then the mapping defined on the generators ($j,k=1,\ldots, d$) as $\eta(u_{jk})=V_{jk}$ and 
$\eta(u_{jk}^*)=W_{jk}$, with $W := -\bar{R}V^t$, extends by linearity and the 
cocycle property to a 
mapping (a $\rho$-$\Cou$-cocycle) on $\Pol(U_d^+)$. Indeed, as the 
calculations in the previous step show, $\eta$ defined this way respects the relations $u^*u-1$ 
and $\bar{u}u^t-1$. So we are left to prove that $\eta$ respects the two 
remaining sets of relations.

Note then that if the equality \eqref{eq_relations_RV} holds and $W := -\bar{R}V^t$, then 
$-V = (R^tW)^t = RW^t$. Indeed, as $R^t\bar{R}=RR^* = I$, we have
\[
(R^tW)^t =  -(R^t\bar{R}V^t)^t = -V,\]
\[
RW^t =  -R(\bar{R}V^t)^t \stackrel{\eqref{eq_relations_RV}}{=} 
-R((R^*V)^t)^t=-RR^* V = -V.
\]
Thus for any $j,k=1,\ldots,d$
\begin{align*}
\eta \big((uu^*)_{jk}\big) = &\sum_{p=1}^d  \eta ( u_{jp}u_{kp}^*)
= (RW^t + V)_{jk}=0 = \eta(\delta_{jk}\one) \quad \mbox{and}
\\
\eta((u^t\bar{u})_{jk}) = & \sum_{p=1}^d \eta ( u_{pj}u_{pk}^* )
= (R^t W + V^t)_{jk}=0 = \eta(\delta_{jk}\one),
\end{align*}
so $\eta$ respects also the missing relations.
\end{proof}

We are now ready to formulate a criterion for a given cocycle on $\Pol(U_d^+)$ to admit a generating functional.

\begin{tw}\label{Theorem:iff on UNplus}	
Let $H$ be a Hilbert space, let $\rho:\Pol(U_d^+)\to B(H)$ be a representation, and let $\eta:\Pol(U_d^+)\to H$ be a $\rho$-$\Cou$-cocycle.
Then $\eta$ admits a generating functional if and only if
\[\wt{B}=B^t,\] 
where $B$ and $\wt{B}$ are 
defined as in \eqref{Bdef} and \eqref{B2def}.
\end{tw}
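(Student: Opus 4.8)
The plan is to treat the two implications separately, deriving both from the coboundary condition \eqref{eq_def_psi}: for necessity I would apply $\psi$ directly to the defining relations, and for sufficiency I would use the lift-and-descend strategy through $\BGW$. Throughout I record that $\Pol(U_d^+)$ is the quotient of $\BGW$ by (R1) $uu^*=u^*u=I$ (already built into $\BGW$) together with (R3) specialized to $Q=I$, i.e.\ $u^t\bar u=\bar u u^t=I$; in components these read $\sum_p u_{pj}u_{pk}^*=\delta_{jk}$ and $\sum_p u_{jp}^*u_{kp}=\delta_{jk}$.

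For necessity, suppose $(\rho,\eta,\psi)$ is a Sch\"urmann triple on $\Pol(U_d^+)$. I would apply $\psi$ to the two scalar identities $(uu^*)_{jk}=\delta_{jk}$ (from (R1)) and $(u^t\bar u)_{jk}=\delta_{jk}$ (from (R3)), expanding each product via \eqref{eq_def_psi} and using $\Cou(u_{jk})=\delta_{jk}$ and $\psi(1)=0$. The first collapses to
\[ \psi(u_{jk})+\psi(u_{kj}^*)+\wt B_{jk}=0, \]
where the inner-product terms assemble into $\sum_p\la\eta(u_{jp}^*),\eta(u_{kp}^*)\ra=\wt B_{jk}$ by \eqref{B2def}. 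The second yields $\psi(u_{kj})+\psi(u_{jk}^*)+\sum_p\la\eta(u_{pj}^*),\eta(u_{pk}^*)\ra=0$, and here the key input is Lemma \ref{Lemma-relation}, which identifies the last sum with $B_{jk}$, giving
\[ \psi(u_{kj})+\psi(u_{jk}^*)+B_{jk}=0. \]
Swapping $j$ and $k$ in the first relation and comparing with the second eliminates every occurrence of $\psi$ and leaves $\wt B_{kj}=B_{jk}$ for all $j,k$, that is $\wt B=B^t$.

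For sufficiency, assume $\wt B=B^t$. I would lift $\eta$ to $\eta':=\eta\circ q$ on $\BGW$, where $q:\BGW\to\Pol(U_d^+)$ is the quotient map, and invoke Theorem \ref{thm_schurmann} to obtain the generating functional $\psi'$ given by \eqref{eq_gen_fun_bgw}; thus $\psi'(x_{jk})=-\tfrac12\wt B_{jk}$ and, by hermitianity and selfadjointness of $\wt B$, $\psi'(x_{jk}^*)=-\tfrac12\wt B_{kj}$. By Lemma \ref{lem_5.8}, the triple $(\rho',\eta',\psi')$ descends to a Sch\"urmann triple on $\Pol(U_d^+)$ provided $\Cou,\rho',\eta',\psi'$ all vanish on the defining relations $r^{(1)}_{jk}=\sum_p x_{pj}x_{pk}^*-\delta_{jk}$ and $r^{(2)}_{jk}=\sum_p x_{jp}^*x_{kp}-\delta_{jk}$. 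The first three vanish automatically since $\rho,\eta$ already live on the quotient, so the content is to evaluate $\psi'$ on the $r^{(m)}_{jk}$. Expanding by \eqref{eq_def_psi} exactly as above, and invoking Lemma \ref{Lemma-relation} to rewrite $\sum_p\la\eta(u_{pj}^*),\eta(u_{pk}^*)\ra=B_{jk}$ in the $r^{(1)}$ computation (the $r^{(2)}$ case produces $B_{jk}$ directly from \eqref{Bdef}), I find in both cases $\psi'(r^{(m)}_{jk})=B_{jk}-\wt B_{kj}$, which vanishes for all $j,k$ precisely because $\wt B=B^t$. Hence the prescription $\psi(q(a)):=\psi'(a)$ is well-defined and completes $(\rho,\eta)$ to a Sch\"urmann triple, so $\eta$ admits a generating functional.

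I expect the only genuine subtlety to be bookkeeping: keeping the two sums $\sum_p\la\eta(u_{jp}^*),\eta(u_{kp}^*)\ra$ and $\sum_p\la\eta(u_{pj}^*),\eta(u_{pk}^*)\ra$ apart, and recognizing that Lemma \ref{Lemma-relation} is exactly what converts the second into $B_{jk}$. Without that identification the relation coming from (R3) would not visibly produce $B$, and the equivalence would fail to close; this is the step where I would be most careful about transposes and adjoints.
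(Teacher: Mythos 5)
Your proposal is correct and follows essentially the same route as the paper: applying $\psi$ to the defining relations for necessity, and for sufficiency lifting to $\BGW$, using Sch\"urmann's formula \eqref{eq_gen_fun_bgw} to set $\psi'(x_{jk})=-\tfrac12\wt{B}_{jk}$, and checking via Lemma \ref{lem_5.8} and Lemma \ref{Lemma-relation} that $\psi'$ kills the relations exactly when $\wt{B}=B^t$. The only (immaterial) difference is that in the necessity step the paper evaluates $\psi$ on $\bar{u}u^t=I$, which produces $B_{kj}$ directly from \eqref{Bdef}, whereas you use $u^t\bar{u}=I$ and convert via Lemma \ref{Lemma-relation}.
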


\begin{proof}
Suppose that  $\psi$ is a generating functional associated with $\eta$. Then, due to the relations
$uu^*=I$ and $\bar{u}u^t=I$, we have for each $j,k=1, \ldots,d$
\begin{equation*}
\begin{split}
0&=\delta_{jk}\psi(\one)=\sum_{p=1}^d\psi(u_{jp}u^\ast_{kp})=\psi(u_{jk})+\overline{
\psi(u_ { kj } ) } +\wt{B}_{jk}, \\
0&=\delta_{kj}\psi(\one)= \sum_{p=1}^d\psi(u^\ast_{kp}u_{jp})=\overline{\psi(u_{kj})}
+\psi(u_{jk})+B_{kj}.
\end{split}
\end{equation*}
Comparing the above two equations, we arrive at $\wt{B}=B^t,$ as 
required.

Conversely suppose that the cocycle $\eta$ satisfies $\wt{B}=B^t$. Consider the algebra $\BGW$, with the generators denoted by $\{x_{jk}:j,k=1,\ldots,d\}$. Let $q:\BGW\longrightarrow \Pol(U_d^+)$ 
be the quotient map and set $\rho'=\rho\circ q$,  $\eta^\prime:=\eta\circ q$. Then $\rho'$ is a representation of $\BGW$ and $\eta'$ is a $\rho'$-$\Cou$-cocycle on $\BGW$. It follows from Theorem \ref{thm_schurmann} that the map 
defined by $(j,k=1,\ldots, d)$
\[
\psi^\prime(x_{jk})=-\frac{1}{2}\wt{B}_{jk}
\]
extends to a generating functional on $\BGW$ corresponding to the cocycle 
$\eta^\prime$. We aim to define a functional $\psi:{\rm Pol} 
(U_d^+)\rightarrow\bc$ by 
$\psi(q(x)):=\psi^\prime(x)$ for $x\in \BGW$. According to Lemma \ref{lem_5.8} $\psi$ will be 
well-defined provided we have for all  $j,k=1,\ldots, d$,
\[
\psi^{\prime}(\sum_{p=1}^d x^\ast_{jp}x_{kp})=0=\psi'(\sum_{p=1}^d x_{pj}x^\ast_{pk}). 
\]
Since $\overline{\tilde{B}_{jk}}=\tilde{B}_{kj}$, and by the assumption $\tilde{B}_{kj}=B_{kj}$ we have
\begin{equation*}
\begin{split}
\sum_{p=1}^d\psi^\prime(x^\ast_{jp}x_{kp})=\overline{
\psi^\prime(x_{jk})}+\psi^\prime(x_{kj})+B_{jk}=-\wt{B}_{kj}+B_{jk}=0,
\end{split}
\end{equation*}
and similarly
\begin{equation*}
\sum_{p=1}^d\psi^\prime(x_{pj}x^\ast_{pk})=\psi^\prime(x_{kj})+\overline{
\psi^\prime(x_{jk})}+\sum_{p=1}^d  \la \eta(x_{pj}^*), \eta(x^{\ast}_{pk}) \ra= 
-\wt{B}_{kj}+ \sum_{p=1}^d  \la \eta(x_{pj}^*), \eta(x^{\ast}_{pk}) \ra.
\end{equation*}
  It remains to use Lemma \ref{Lemma-relation} and the assumption that  $\wt{B}=B^t$ to conclude that the above expression equals zero.

Thus $\psi$ is well-defined and can be easily checked to be a generating functional associated to $\eta$.
\end{proof}

Finally note that Theorem \ref{Theorem:iff on UNplus} takes a very simple form for  cocycles with values in $\bc$.

\begin{cor}\label{corollary:Gaussian}
Let $\rho:\Pol(U_d^+)\to B(\bc)$ be a representation, and let $\eta:\Pol(U_d^+)\to \bc$ be a $\rho$-$\Cou$-cocycle. Then $\eta$ admits a generating 
functional if and only if $\bar{W}W^t=VV^*$, where $V$ and $W$ are defined as in \eqref{RVdef} and \eqref{RWdef}.
In particular a Gaussian cocycle $\eta:\Pol(U_d^+)\to \bc$ admits a generating 
functional if and only if $VV^*=V^*V$.
\end{cor}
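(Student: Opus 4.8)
The plan is to obtain this as a direct specialisation of Theorem \ref{Theorem:iff on UNplus} to the rank-one case $H=\bc$, the only work being to rewrite the criterion $\wt{B}=B^t$ in terms of the matrices $V=(\eta(u_{jk}))$ and $W=(\eta(u_{jk}^*))$. Since $H=\bc$ the inner product is $\la a,b\ra=\bar{a}b$, and $V,W$ are genuine complex matrices, so each of $B$, $\wt{B}$, $V$, $W$ lies in $M_d(\bc)$ and I can compute entrywise.

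First I would unwind the definitions \eqref{Bdef} and \eqref{B2def}. For $B$ one gets $B_{jk}=\sum_p\overline{V_{jp}}V_{kp}$, hence $(B^t)_{jk}=B_{kj}=\sum_p V_{jp}\overline{V_{kp}}=(VV^*)_{jk}$, that is $B^t=VV^*$. In the same way $\wt{B}_{jk}=\sum_p\overline{W_{jp}}W_{kp}=(\bar{W}W^t)_{jk}$, that is $\wt{B}=\bar{W}W^t$. Feeding these two identities into Theorem \ref{Theorem:iff on UNplus} turns the criterion $\wt{B}=B^t$ into $\bar{W}W^t=VV^*$, which is precisely the first assertion; no further verification is needed, since the theorem already phrases the equivalence in terms of the existence of a generating functional.

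For the Gaussian case I would invoke Lemma \ref{lemma_eta_S}, which gives $W=-V^t$ (as recorded just after \eqref{B2def}). Then $\bar{W}=-V^*$ and $W^t=-V$, so $\bar{W}W^t=V^*V$, while the right-hand side $VV^*$ is unaffected; the criterion $\bar{W}W^t=VV^*$ therefore collapses to $V^*V=VV^*$, i.e.\ to the normality of $V$. The statement is thus entirely a transcription of Theorem \ref{Theorem:iff on UNplus} in the rank-one setting, and the only point demanding care is the consistent bookkeeping of transposes, conjugates and the inner-product convention; I anticipate no genuine obstacle.
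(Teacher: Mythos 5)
Your proposal is correct and follows the paper's proof essentially verbatim: the paper likewise observes that for $H=\bc$ one has $B=\bar{V}V^t$ (so $B^t=VV^*$) and $\wt{B}=\bar{W}W^t$, applies Theorem \ref{Theorem:iff on UNplus}, and uses $W=-V^t$ from Lemma \ref{lemma_eta_S} for the Gaussian case. The bookkeeping of conjugates and transposes in your computation is accurate.
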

\begin{proof}
It suffices to observe that in the case $H=\bc$ we have $B=\bar{V} V^t$, and $\wt{B}=\bar{W} W^t$ and apply Theorem \ref{Theorem:iff on UNplus}. In the Gaussian case we use further the equality $W=-V^t$.
\end{proof}

\subsection{$U_F^+$ for non-generic $F$}

In this subsection we will show that for non-generic $F$ the quantum group $U_F^+$ does not have any of the properties (GC), (NC) or (LK). Of course to that end it suffices to show that $U_F^+$ does not have  Property (LK), but our proof will proceed by first building Gaussian and purely non-Gaussian cocycles without generating functionals.

\begin{tw} \label{thm-non-generic U_F}
Let $d\in \bn$ and let $F\in GL_d(\bc)$ be a non-generic matrix (so in particular $d\geq2$). Then the compact quantum group $U_F^+$ has none of the properties (GC), (NC) and (LK).
\end{tw}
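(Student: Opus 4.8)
The plan is to produce, for a single degenerate pair of indices, explicit counterexamples of all three types. By Remark \ref{rem_abstract_isomorphism} we may assume $Q=F^*F$ is diagonal, and non-genericity supplies two indices $j_0\neq k_0$ with $Q_{j_0j_0}=Q_{k_0k_0}=:c$. First I would record the obstruction governing the existence of a generating functional for a cocycle $(\rho,\eta)$, derived exactly as in the proof of Theorem \ref{Theorem:iff on UNplus}: applying a hypothetical generating functional $\psi$ to the unitarity relation $uu^*=I$ gives $\psi(u_{jk})+\overline{\psi(u_{kj})}=-\wt{B}_{jk}$, while applying it to \eqref{eqn_rel_Au} and clearing denominators gives $Q_{kk}\psi(u_{kj})+Q_{jj}\overline{\psi(u_{jk})}+\sum_pQ_{pp}\la\eta(u_{pj}^*),\eta(u_{pk}^*)\ra=0$. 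For the pair $(j_0,k_0)$ the two prefactors coincide, so eliminating $\psi(u_{k_0j_0})+\overline{\psi(u_{j_0k_0})}$ leaves a single scalar condition on $\eta$ alone, namely the $(j_0,k_0)$ entry of the equation $\wt B=B^t$ (using Lemma \ref{Lemma-relation}). Genericity was precisely what made this condition vacuous, since it forced $\eta(u_{jk})=0$ off the diagonal; in the non-generic case it becomes a genuine obstruction because $\eta$ may now be nonzero on the $\{j_0,k_0\}$-block.

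For the failure of (GC) I would build a scalar ($H=\bc$) Gaussian cocycle supported on this block. Define $\eta(u_{jk})$ to be the entries of a fixed \emph{non-normal} $2\times2$ matrix $V$ when $j,k\in\{j_0,k_0\}$ and $0$ otherwise, set $\eta(u_{jk}^*):=-\eta(u_{kj})$ as forced by Lemma \ref{lemma_eta_S}, and extend by the Gaussian rule. Applying $\eta$ to \eqref{eqn_rel_Au} reduces to $(1-Q_{jj}/Q_{kk})\eta(u_{kj})=0$, which holds since $\eta$ is nonzero only where $Q_{jj}=Q_{kk}$; hence by Lemma \ref{lem_5.8} this is a genuine Gaussian cocycle. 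On the block all weights $Q_{pp}/c$ equal $1$, so the obstruction reduces to the condition of Corollary \ref{corollary:Gaussian}, i.e.\ $VV^*=V^*V$; choosing $V$ non-normal violates it, so no generating functional exists and (GC) fails.

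For the failure of (NC) I would repeat the construction with a nontrivial representation $\rho$ concentrated on the block: take $\rho(u_{jk})=\delta_{jk}\id$ off the block and a nontrivial unitary-with-noncommuting-entries representation $R=(\rho(u_{jk}))_{j,k\in\{j_0,k_0\}}$ on a Hilbert space $H$ with $H_G=\{0\}$, then solve the cocycle constraint \eqref{eq_relations_RV} (Proposition \ref{prop_v_unplus}, whose block form is unchanged since the weights cancel) for a matrix $V$ that again violates $\wt B=B^t$. Checking that $\rho$ respects the full relation \eqref{eqn_rel_Au} (the relations decouple because $\rho$ is trivial off the block) and that $\eta$ respects them via Lemma \ref{lem_5.8}, one obtains a purely non-Gaussian pair with no generating functional, so (NC) fails.

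Finally, for (LK) — which does not follow formally from the failure of (GC) and (NC) — I would combine the two constructions. The defect measured by the obstruction above is additive over orthogonal direct sums of Sch\"urmann triples, because the relevant inner products split, so I would take $\eta=\eta_G\oplus\eta_N$ with $\rho=(\Cou(\cdot)\id)\oplus\rho_N$, where $\eta_G$ is the Gaussian cocycle and $\eta_N$ the purely non-Gaussian one, scaled so that their defects cancel; then $\eta$ satisfies $\wt B=B^t$ and, by the sufficiency half of the argument (together with Lemma \ref{lem_5.8}), admits an honest generating functional $\psi$. Any L\'evy--Khintchine decomposition of $\psi$ would force a generating functional for its Gaussian part $(\Cou(\cdot)\id,\eta_G)$, which does not exist by the (GC) step; hence $\psi$ has no such decomposition and (LK) fails. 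The main obstacle I anticipate is the (NC)/(LK) construction: producing an explicit nontrivial representation on the block with $H_G=\{0\}$ together with a cocycle realizing a prescribed nonzero defect (and matching it against the Gaussian one), while verifying compatibility with all the defining relations of $U_F^+$ rather than only those of the block.
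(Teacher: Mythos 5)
Your proposal follows essentially the same route as the paper: the paper first treats $U_2^+$ (equivalently, your degenerate $2\times 2$ block) with a non-normal $V$ to kill (GC), a non-normal $V'$ together with a one-dimensional purely non-Gaussian representation to kill (NC), and the direct sum $\Cou\oplus\gamma$, $\eta_G\oplus\eta_N$ --- whose defects cancel so that $B=\wt{B}$ --- to kill (LK), then transports everything to general non-generic $U_F^+$ via exactly the block homomorphism $\pi$ you describe implicitly by supporting your cocycles on the degenerate block. The only step you flag as a potential obstacle, namely producing an explicit purely non-Gaussian representation on the block admitting a cocycle with prescribed defect, is resolved trivially in the paper by taking $R=-I_2$ acting on $H=\bc$: then $H_G=\{0\}$, the constraint \eqref{eq_relations_RV} is automatic for every $V'$, and $W'=(V')^t$ leads to the same non-normality obstruction, so there is no real difficulty there.
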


\begin{proof}
We begin by considering the case of $F=I_2$ (so $\QG=U_2^+$), the general case will follow later.
In this proof we will denote the canonical generators of $\Pol(U_2^+)$ by $(v_{jk})_{j,k=1}^2$.  

\medskip

\textbf{No property (GC)}: 
Proposition \ref{prop_v_unplus} implies that any matrix $V\in M_2(\bc)$ defines a  
Gaussian cocycle $\eta_G$ on $U_2^+$ by the formula $\eta_G(v_{jk})= V_{jk}$, $j,k=1, \ldots, 2$. Corollary \ref{corollary:Gaussian} implies that if $V$ is not normal, then $\eta_G$ does not admit a generating functional. Fix the choice of $V=\begin{pmatrix}
1 & i \\ 0 & 1
\end{pmatrix}$.

\medskip 
\textbf{No property (NC)}: consider the purely non-Gaussian representation $\gamma$ of $\Pol(U_2^+)$ on $\bc$, given by the matrix $R=-I_2$ and the formula
\[\gamma(v_{jk}) = R_{jk}, \;\;\;\;\;\; j,k=1, \ldots, 2.\]
Note that here again by Proposition \ref{prop_v_unplus} any matrix $V'\in M_2(\bc)$ defines a purely non-Gaussian cocycle $\eta_N$ on $\Pol(U_2^+)$ by the formula $\eta_N(v_{jk})= 
V'_{jk}$, $j,k=1, \ldots, 2$. This time however, by the second part of 
Proposition \ref{prop_v_unplus}, we have $W'=(V')^t$. This change does not 
affect the necessary condition in Proposition \ref{prop_v_unplus} and again any 
choice of a non-normal $V'$ yields a cocycle $\eta_N$ which does not admit a 
generating functional. Fix the choice of $V'=\begin{pmatrix}
1 & 0 \\ i & 1
\end{pmatrix}$.

\medskip 
\textbf{No property (LK)}: consider $H=\bc\oplus \bc$, $\rho=\Cou\oplus\gamma$ and 
$\eta=\eta_G\oplus \eta_N$ with $\eta_G, \eta_N$ not admitting generating functionals constructed above. Then the matrix $R$ is of the form 
$R= \begin{pmatrix} J & 0 \\ 0 & J  \end{pmatrix}$ with $J=\begin{pmatrix}  1 & 
0 \\ 0 & -1 \end{pmatrix}$, and $V = Ve_1 + V'e_2.$  A direct computation using formulae \eqref{Bdef} and \eqref{B2def} shows that we have 
$$B=\begin{pmatrix} 3 & 0 \\ 0 & 3 
\end{pmatrix}   = \wt{B},$$ 
hence, by Theorem \ref{Theorem:iff on UNplus}, $\eta$ admits a generating functional. But this functional has no 
L\'evy-Khintchine decomposition. If it was the case, then $(\Cou,\eta_G)$ and 
$(\delta, \eta_N)$ would admit generating functionals, which was shown not to be 
the case.

\medskip

\textbf{General case:}  We can assume that $F^*F$ is of the form $\textup{diag}[\lambda_1,\ldots,\lambda_{d}]$, with $\lambda_1=\lambda_2=1$. The key role will be played by the unital $^*$-homomorphism $\pi:\Pol(U_F^+)\to \Pol(U_2^+)$ given by the formula 
\[\pi(u_{jk}) = \begin{cases} v_{jk} & \textup{ for }  j,k\in\{1,2\}, \\
\delta_{jk} 1 & \;\;\;\;\; \textup{else}. \end{cases}
\]	

\medskip
\textbf{No property (GC)}: 
Define a Gaussian cocycle $\wt{\eta}_G$ on $\Pol(U_F^+)$ by the formula $\wt{\eta}_G:= \eta_G \circ \pi$, with $\eta_G$ as in the first part of the proof. Suppose that $\wt{\eta}_G$ admits a generating functional $\psi_G: \Pol(U_F^+)\to \bc$. Recall that the generators of $U_F^+$ satisfy the two following conditions ($j,k=1,\ldots,d$):
\[ \sum_{p=1}^d u_{jp}u_{kp}^* = \delta_{jk} 1,\;\;\;  \sum_{p=1}^d 
\frac{\lambda_p}{\lambda_j} u_{pk}u_{pj}^* =\delta_{jk} 1.\]
We are only interested in fact in $j,k=1,2$ and apply $\psi_G$ to both of the above equalities. 
It is easy to see that as $\lambda_1=\lambda_2=1$ and $\wt{\eta}_G (u_{lm})$ vanishes unless $l,m\in \{1,2\}$, this leads to the following equalities  ($j,k=1,\ldots,2$):
\[ 0 = \psi_G(u_{jk}) + \psi_G(u_{kj}^*) + \sum_{p=1}^2 \la \wt{\eta}_G 
(u_{jp}^*), \wt{\eta}_G(u_{kp}^*) \ra
\]
and 
\[ 0 = \psi_G(u_{jk}) + \psi_G(u_{kj}^*) + \sum_{p=1}^2 \la \wt{\eta}_G 
(u_{pk}^*), \wt{\eta}_G(u_{pj}^*) \ra.
\]
Thus in particular we get 
\[ \sum_{p=1}^2 \la \eta_G (v_{jp}^*), \eta_G(v_{kp}^*) \ra = \sum_{p=1}^2 \la 
\eta_G (v_{pk}^*), \eta_G(v_{pj}^*) \ra.\]
This however is nothing but the condition for the existence of the generating functional for $\eta_G$, which we know not to hold. Thus $\wt{\eta}_G$ does not admit a generating functional.

\medskip
\textbf{No property (NC)}: we argue as above, defining first a purely non-Gaussian representation $\wt{\gamma}$ of $\Pol(U_F^+)$ on $\bc$ via the formula $\wt{\gamma}=\gamma \circ \pi$ and then a $\wt{\gamma}$-$\Cou$-cocycle $\wt{\eta}_N:=\eta_N\circ \pi$, with $\eta_N$ as in the first part of the proof. If we suppose that  $\wt{\eta}_N$ admits a generating functional $\psi_N: \Pol(U_F^+)\to \bc$, then once again we obtain the equality 
\[ \sum_{p=1}^2 \la \eta_N (v_{jp}^*), \eta_N(v_{kp}^*) \ra = \sum_{p=1}^2 \la 
\eta_N (v_{pk}^*), \eta_N(v_{pj}^*) \ra,\]
which contradicts the fact that $\eta_N$ does not admit a generating functional.

\medskip
\textbf{No property (LK)}: this is now straightforward. Define a representation $\wt{\rho}:\Pol(U_F^+) \to B(\bc\oplus \bc)$ as $\wt{\rho}:=\Cou \oplus \wt{\gamma}$ and the $\wt{\rho}$-$\Cou$-cocycle
$\wt{\eta}$ as $\wt{\eta}_G + \wt{\eta}_N$. By the first part of the proof $\wt{\eta}$ possesses a generating functional $\wt{\psi} = \psi \circ \pi$, where $\psi:\Pol(U_2^+)$ is the generating functional for $\eta_G+ \eta_N$.  Arguing as in the case of $U_2^+$ we see that $\wt{\psi}$ does not 
admit a L\'evy-Khintchine decomposition.
\end{proof}

\subsection{The free orthogonal quantum group $O_d^+$}

We begin from the case $d=2$.

\begin{prop}
The compact quantum group $O_2^+$ has  properties (GC) and (LK), but not (NC).
\end{prop}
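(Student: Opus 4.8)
The plan is to handle the three claims in the order (GC), then (LK) for free, then the failure of (NC). Two structural facts drive everything. First, by the implication recorded after Definition \ref{def-properties}, (GC) implies (LK), so once (GC) is proved (LK) is automatic and needs no separate argument. Second, the canonical generators of $\Pol(O_2^+)$ are self-adjoint, $u_{jk}^\ast=u_{jk}$; consequently every cocycle satisfies $\eta(u_{jk}^\ast)=\eta(u_{jk})$ (so $\wt B=B$), every generating functional is \emph{real} on the $u_{jk}$ by hermiticity, and by Example \ref{ex_orth} $\Pol(O_2^+)$ is the quotient of $\BGW$ by the \emph{single} family of relations $x_{jk}-x_{jk}^\ast$. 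Note that $O_2^+$ is non-generic, so Theorem \ref{thm_aoq_decomp} does not apply and a direct argument is required.

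For (GC), take a Gaussian cocycle $\eta\colon\Pol(O_2^+)\to H$. Lemma \ref{lemma_eta_S} together with self-adjointness gives $\eta(u_{jk})=-\eta(u_{kj})$, so $V=(\eta(u_{jk}))$ is antisymmetric: $\eta(u_{11})=\eta(u_{22})=0$ and $\eta(u_{12})=-\eta(u_{21})=:\xi$. A one-line computation from \eqref{Bdef} gives $B=\|\xi\|^2 I_2$, which is real. I would lift $\eta$ to the Gaussian cocycle $\eta'=\eta\circ q$ on $\BGW$ and take the functional of Sch\"urmann's formula \eqref{eq_gen_fun_bgw}, which here reads $\psi'(x_{jk})=-\tfrac12 B_{jk}=-\tfrac12\|\xi\|^2\delta_{jk}$. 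By Lemma \ref{lem_5.8} it then suffices to check that $\Cou$, $\rho'=\Cou(\cdot)\id$, $\eta'$ and $\psi'$ vanish on the relations $x_{jk}-x_{jk}^\ast$; the first three do so trivially, and $\psi'$ does because the values $\psi'(x_{jk})$ are real. Hence $\psi'$ descends to a generating functional for $\eta$, establishing (GC) and therefore (LK).

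For the failure of (NC), I would exhibit an explicit purely non-Gaussian pair on $H=\bc$. Let $\gamma$ be the character of $\Pol(O_2^+)$ with matrix $R=(\gamma(u_{jk}))=-I_2$; this is legitimate since $-I_2$ is real orthogonal, and it is purely non-Gaussian because $R\neq I_2$ forces $H_G=\{0\}$. Using Proposition \ref{prop_v_unplus} on the lift to $U_2^+$ together with the self-adjointness constraint $W=V$, a $\gamma$-$\Cou$-cocycle corresponds to a \emph{symmetric} matrix $V\in M_2(\bc)$; fix $V=\left(\begin{smallmatrix}1 & i\\ i & 0\end{smallmatrix}\right)$, giving the cocycle $\eta(u_{jk})=V_{jk}$. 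A direct computation yields $B=\left(\begin{smallmatrix}2 & i\\ -i & 1\end{smallmatrix}\right)$, which is Hermitian but not symmetric. If $\eta$ had a generating functional $\psi$, then applying $\psi$ to the relation $\sum_p u_{1p}u_{2p}=0$ would give $\psi(u_{12})+\psi(u_{21})+B_{12}=0$, i.e.\ $\psi(u_{12})+\psi(u_{21})=-i$; but the left-hand side is real because $\psi$ is real on the self-adjoint generators, a contradiction. Equivalently, $\psi$ would pull back along $q\colon\Pol(U_2^+)\to\Pol(O_2^+)$ to a generating functional for the lifted cocycle, whence Theorem \ref{Theorem:iff on UNplus} would force $\wt B=B^t$, i.e.\ $B=B^t$, which fails. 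Thus $(\gamma,\eta)$ admits no generating functional and (NC) fails.

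The crux, and the place where $O_2^+$ departs from the unitary case of Theorem \ref{thm-non-generic U_F}, is to locate the correct obstruction. For $U_2^+$ the obstruction to a generating functional was non-normality of $V$, and one was free to choose $V$ non-normal; here self-adjointness forces the natural candidate cocycle matrices to be symmetric, so one cannot copy that example verbatim. The resolution is to notice that self-adjoint generators make any generating functional real on the $u_{jk}$, which turns the obstruction into the failure of the Hermitian matrix $B$ to be real (equivalently $B\neq B^t$). The only remaining care is bookkeeping: verifying that the chosen data $(R,V)$ really respect all defining relations of $\Pol(O_2^+)$, and that $\gamma$ is purely non-Gaussian.
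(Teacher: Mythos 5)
Your argument is correct. It differs from the paper's own proof, which simply invokes the isomorphism $O_2^+\cong SU_{-1}(2)$ and cites the Sch\"urmann--Skeide results for $SU_q(2)$; the paper only remarks parenthetically that the claims ``can also be easily deduced from Proposition \ref{prop_v_onplus} and Corollary \ref{corollary:GaussianOrthogonal}'', and what you have written is essentially a full execution of that alternative. Your route buys self-containedness (no appeal to the $SU_{-1}(2)$ identification or to external references) at the cost of forward references to results proved later in the section; this is harmless since Proposition \ref{prop_v_onplus} and Theorem \ref{Theorem: iff on ONplus} do not depend on the present proposition. All the computations check out: Lemma \ref{lemma_eta_S} plus self-adjointness forces a Gaussian $V$ to be antisymmetric, hence $V=\bigl(\begin{smallmatrix}0&\xi\\-\xi&0\end{smallmatrix}\bigr)$ and $B=\|\xi\|^2I_2$ is real, so the Sch\"urmann functional on $\BGW$ takes real values on the generators and descends through the relations $x_{jk}=x_{jk}^*$ by Lemma \ref{lem_5.8}; and for (NC), the character $R=-I_2$ admits exactly the symmetric $V$ as cocycle matrices by Proposition \ref{prop_v_onplus}, and your choice $V=\bigl(\begin{smallmatrix}1&i\\i&0\end{smallmatrix}\bigr)$ gives $B=\bigl(\begin{smallmatrix}2&i\\-i&1\end{smallmatrix}\bigr)$, which is not real, so no generating functional exists. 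Two cosmetic points: the relations $x_{jk}-x_{jk}^*$ are anti-selfadjoint rather than selfadjoint as Lemma \ref{lem_5.8} formally requires (replace them by $i(x_{jk}-x_{jk}^*)$, or note the generated ideal is the same), and your identification of the obstruction as ``$B$ Hermitian but not real'' is precisely the content of Theorem \ref{Theorem: iff on ONplus}, so you could shorten the (NC) step by citing it directly.
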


\begin{proof}
Recall that $O_2^+\cong SU_{-1}(2)$. The statements follow from the results from \cite{schurmann+skeide98}  (see also \cite{skeide99}). (They can be also easily deduced from Proposition \ref{prop_v_onplus} and Corollary \ref{corollary:GaussianOrthogonal} below.)
\end{proof}

We shall see that the situation changes for $O_d^+$ with $d\geq 3$.
For that we first prove the analogue of Proposition \ref{prop_v_unplus} in the case of the free orthogonal group.  

\begin{prop} \label{prop_v_onplus}
Let $H$ be a Hilbert space, and let $\rho:\Pol(O_d^+)\to B(H)$ be a representation, define the matrix $R$ as in the first formula in \eqref{RVdef} and let $V\in M_d(H)$. Then the following conditions are equivalent:
\begin{itemize}
\item[(i)] there is a $\rho$-$\Cou$-cocycle $\eta$ on $\Pol(O_d^+)$ satisfying the second formula in\eqref{RVdef}; 
\item[(ii)] \begin{equation} \label{eq_relations_RV_orth}
(R^*V)^t = \bar{R}V^t=-V.
\end{equation}
\end{itemize}
\end{prop}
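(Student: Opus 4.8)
The plan is to deduce the statement from Proposition \ref{prop_v_unplus}, exploiting that $\Pol(O_d^+)$ is the quotient of $\Pol(U_d^+)$ by the relations $u_{jk}=u_{jk}^*$ (Example \ref{ex_orth} with $F=I_d$). Write $q\colon\Pol(U_d^+)\to\Pol(O_d^+)$ for the quotient map. The decisive simplification is that in $\Pol(O_d^+)$ the generators are self-adjoint, so each $\rho(u_{jk})$ is a self-adjoint operator; hence $\bar{R}=R$ and $R^*=R^t$, and any $\rho$-$\Cou$-cocycle $\eta$ automatically satisfies $W=V$ (with $W$ as in \eqref{RWdef}), since $\eta(u_{jk}^*)=\eta(u_{jk})$.

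For the implication (i)$\Rightarrow$(ii) I would compose a given cocycle $\eta$ on $\Pol(O_d^+)$ with $q$ to obtain a $(\rho\circ q)$-$\Cou$-cocycle $\eta'=\eta\circ q$ on $\Pol(U_d^+)$ realising the same matrix $V$. Proposition \ref{prop_v_unplus} then yields $(R^*V)^t=\bar{R}V^t=-W'$, where $W'_{jk}=\eta'(u_{jk}^*)=\eta(u_{jk}^*)=V_{jk}$; thus $(R^*V)^t=\bar{R}V^t=-V$, which is exactly \eqref{eq_relations_RV_orth}. For (ii)$\Rightarrow$(i), note that \eqref{eq_relations_RV_orth} contains in particular the equality $(R^*V)^t=\bar{R}V^t$, i.e.\ the condition \eqref{eq_relations_RV} of Proposition \ref{prop_v_unplus} for the representation $\rho\circ q$. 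That proposition therefore produces a $(\rho\circ q)$-$\Cou$-cocycle $\eta'$ on $\Pol(U_d^+)$ with $\eta'(u_{jk})=V_{jk}$ and $\eta'(u_{jk}^*)=W'_{jk}$, where $W'=-\bar{R}V^t$. The remaining half of \eqref{eq_relations_RV_orth}, namely $\bar{R}V^t=-V$, is precisely what forces $W'=V$, so that $\eta'(u_{jk}^*)=\eta'(u_{jk})$ for all $j,k$. Consequently $\eta'$ annihilates the generators $u_{jk}-u_{jk}^*$ of $\ker q$, and since $\rho\circ q$ and $\Cou$ do so as well, $\eta'$ descends through $q$ to a $\rho$-$\Cou$-cocycle $\eta$ on $\Pol(O_d^+)$ with $\eta(u_{jk})=V_{jk}$, as required.

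The argument is essentially bookkeeping once the reduction is set up, and I do not expect a genuine obstacle; the substantive content is simply the observation that the extra orthogonality relation $u=\bar{u}$ translates into the single additional scalar constraint $\bar{R}V^t=-V$ (equivalently $W=V$), which is exactly what makes the $\Pol(U_d^+)$-cocycle compatible with the quotient. The one point deserving care is the descent step: to invoke Lemma \ref{lem_5.8} (or rather its part concerning $\rho$ and $\eta$ only) one should phrase the extra relations in self-adjoint form, replacing $u_{jk}-u_{jk}^*$ by the self-adjoint elements $i(u_{jk}-u_{jk}^*)$, on which $\eta'$, $\rho\circ q$ and $\Cou$ manifestly vanish; alternatively one checks directly that $\eta'$ kills the two-sided $*$-ideal generated by these relations.

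A completely parallel and equally short route is to mirror the proof of Proposition \ref{prop_v_unplus} verbatim, bypassing the quotient altogether. Applying $\eta$ to the two defining relations $\sum_{p}u_{pj}^*u_{pk}=\delta_{jk}$ and $\sum_{p}u_{jp}^*u_{kp}=\delta_{jk}$ and using $W=V$ yields $(R^*V)^t=-V$ and $\bar{R}V^t=-V$ directly, establishing (i)$\Rightarrow$(ii); conversely, these two identities are exactly what is needed to see that the prescription $\eta(u_{jk})=\eta(u_{jk}^*)=V_{jk}$ respects all defining relations of $\Pol(O_d^+)$, giving (ii)$\Rightarrow$(i). I would present the quotient argument as the main proof for brevity and cleanliness, the reduction to the already-established Proposition \ref{prop_v_unplus} being the most economical.
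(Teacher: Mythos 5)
Your proposal is correct and follows essentially the same route as the paper: lift to $\Pol(U_d^+)$, invoke Proposition \ref{prop_v_unplus}, and observe that the self-adjointness of the generators forces $W=V$, which converts \eqref{eq_relations_RV} into \eqref{eq_relations_RV_orth}; the converse is the same descent/relation-checking argument the paper sketches. Your extra care about phrasing the relations $u_{jk}-u_{jk}^*$ in self-adjoint form for the descent step is a reasonable refinement but does not change the substance.
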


\begin{proof}
Any representation of $\Pol(O_d^+)$ lifts canonically to a representation of $\Pol(U_d^+)$, simply by composition with the quotient maps, and the same is true for the corresponding cocycles. Thus the condition $(R^*V)^t = \bar{R}V^t$ from Proposition \ref{prop_v_unplus} is necessary. Moreover every cocycle on $\Pol(O_d^+)$ must of course satisfy the conditions   
$\eta(u_{jk})=\eta(u_{jk}^*)$, for $j,k=1,\ldots, d$, which in terms of the proof of Proposition \ref{prop_v_unplus} means that $W=V$. Hence we must have 
$-V=-W=\bar{R}V^t=(R^*V)^t$. 
	
Conversely, if $V$ satisfies \eqref{eq_relations_RV_orth}, then -- as in the 
proof of Proposition \ref{prop_v_unplus} -- $\eta$ defined by $V$ vanishes on the relations that define 
$O_d^+$ and hence it can be extended by linearity and the cocycle property to 
the whole algebra in question.  
\end{proof}

Now, we check when a given $\rho$-$\Cou$-cocycle on $\Pol(O_d^+)$ admits a generating functional. We use the same methods as in Theorem \ref{Theorem:iff on UNplus}, with the only difference being that now matrices $B$ and $\wt{B}$ of \eqref{Bdef} and \eqref{B2def} naturally coincide (and are self-adjoint).

\begin{tw}\label{Theorem: iff on ONplus}	
Let $H$ be a Hilbert space, let $\rho:\Pol(O_d^+)\to B(H)$ be a representation, and let $\eta:\Pol(O_d^+)\to H$ be a $\rho$-$\Cou$-cocycle.
Then $\eta$ admits a generating functional if and only if the matrix $B$ defined in \eqref{Bdef} is symmetric, which is the case if and only if $B$ has real entries.
\end{tw}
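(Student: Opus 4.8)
The plan is to mirror the proof of Theorem~\ref{Theorem:iff on UNplus}, exploiting two features special to the orthogonal case. Since the generators of $\Pol(O_d^+)$ are self-adjoint, every cocycle satisfies $\eta(u_{jk}^*)=\eta(u_{jk})$ (that is, $W=V$ in the notation of \eqref{RVdef}--\eqref{RWdef}), so the matrices $B$ and $\wt B$ of \eqref{Bdef} and \eqref{B2def} coincide. Moreover $B$ is always Hermitian, because $\overline{B_{jk}}=\sum_{p=1}^d\la\eta(u_{kp}),\eta(u_{jp})\ra=B_{kj}$. Granting Hermiticity, the equivalence ``symmetric $\Leftrightarrow$ real entries'' claimed in the statement is immediate: $B$ is symmetric iff $B_{jk}=B_{kj}=\overline{B_{jk}}$ for all $j,k$, i.e.\ iff every entry of $B$ is real. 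So it remains to show that $\eta$ admits a generating functional if and only if $B=B^t$.

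For the forward implication I would argue exactly as in Theorem~\ref{Theorem:iff on UNplus}. Given a generating functional $\psi$ for $\eta$, apply $\psi$ to the relations $\sum_p u_{jp}u_{kp}^*=\delta_{jk}\one$ (coming from $uu^*=I$) and $\sum_p u_{kp}^*u_{jp}=\delta_{kj}\one$ (coming from $\bar u u^t=I$), using the coboundary identity \eqref{eq_def_psi} and hermiticity of $\psi$. This produces the two scalar equations $\psi(u_{jk})+\overline{\psi(u_{kj})}+\wt B_{jk}=0$ and $\overline{\psi(u_{kj})}+\psi(u_{jk})+B_{kj}=0$; subtracting them gives $\wt B_{jk}=B_{kj}$, i.e.\ $\wt B=B^t$. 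As $\wt B=B$ in the orthogonal case, this is exactly the symmetry of $B$.

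For the converse, assume $B=B^t$, equivalently that $B$ has real entries. Let $q:\BGW\to\Pol(O_d^+)$ be the canonical quotient map; by Example~\ref{ex_orth} its kernel is generated by the self-adjoint relations $i(x_{jk}-x_{jk}^*)$, since $\BGW$ already encodes unitarity and self-adjointness is the only extra relation. Put $\rho'=\rho\circ q$ and $\eta'=\eta\circ q$, a representation and a $\rho'$-$\Cou$-cocycle on $\BGW$, and let $\psi'$ be the generating functional for $\eta'$ furnished by Theorem~\ref{thm_schurmann}, so that $\psi'(x_{jk})=-\tfrac12\wt B_{jk}=-\tfrac12 B_{jk}$. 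I would then descend the Sch\"urmann triple $(\rho',\eta',\psi')$ to $\Pol(O_d^+)$ via Lemma~\ref{lem_5.8}: the maps $\Cou$, $\rho'$, $\eta'$ annihilate $x_{jk}-x_{jk}^*$ (the first trivially, the latter two because $u_{jk}=u_{jk}^*$ in $\Pol(O_d^+)$), while $\psi'(x_{jk}-x_{jk}^*)=-\tfrac12\bigl(B_{jk}-\overline{B_{jk}}\bigr)=0$ precisely because $B$ is real. The resulting functional on $\Pol(O_d^+)$ is then a generating functional for $\eta$.

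The only real obstacle is this last descent, and it is exactly where symmetry (equivalently reality) of $B$ is used: Sch\"urmann's prescription automatically respects the unitarity relations built into $\BGW$, but it respects the additional self-adjointness relations defining $O_d^+$ if and only if each $\psi'(x_{jk})$ is real, i.e.\ $B_{jk}\in\br$. As an alternative to working over $\BGW$, one may instead lift $(\rho,\eta)$ along the quotient $\Pol(U_d^+)\to\Pol(O_d^+)$, apply Theorem~\ref{Theorem:iff on UNplus} (whose hypothesis $\wt B=B^t$ reads $B=B^t$ here) to obtain a generating functional on $\Pol(U_d^+)$, and then descend it to $\Pol(O_d^+)$ by the same check in Lemma~\ref{lem_5.8}.
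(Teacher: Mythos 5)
Your proposal is correct and follows essentially the same route as the paper, which proves this theorem by simply invoking the method of Theorem~\ref{Theorem:iff on UNplus} and observing that $B=\wt{B}$ and $B=B^*$ in the orthogonal case (so that $\wt{B}=B^t$ becomes symmetry, equivalently realness, of $B$). Your version merely spells out the details the paper leaves implicit — the forward computation from $uu^*=I$ and $\bar{u}u^t=I$, and the descent of Sch\"urmann's functional through the self-adjointness relations via Lemma~\ref{lem_5.8} — and both of your suggested lifts (through $\BGW$ or through $\Pol(U_d^+)$) work.
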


The special case of Theorem \ref{Theorem: iff on ONplus}  for $\bc$-valued cocycles, an analogue of Corrolary \ref{corollary:Gaussian}, for the free orthogonal group takes the following form.

\begin{cor}\label{corollary:GaussianOrthogonal}
Let $\rho:\Pol(O_d^+)\to B(\bc)$ be a representation, and let $\eta:\Pol(O_d^+)\to \bc$ be a $\rho$-$\Cou$-cocycle. Then $\eta$ admits a generating 
functional if and only if $VV^*$ has real entries, where $V$ is defined as in \eqref{RVdef}.
\end{cor}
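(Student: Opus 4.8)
The plan is to specialise Theorem~\ref{Theorem: iff on ONplus} to the one-dimensional case $H=\bc$, in complete analogy with the way Corollary~\ref{corollary:Gaussian} was deduced from Theorem~\ref{Theorem:iff on UNplus}. The only work is to identify the scalar matrix $B$ of \eqref{Bdef} explicitly when $H=\bc$ and to relate it to $VV^*$.

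First I would unwind the inner product. On $H=\bc$ we have $\la z,w\ra=\bar z\,w$, so that $B_{jk}=\sum_{p=1}^d\ol{\eta(u_{jp})}\,\eta(u_{kp})=\sum_{p=1}^d\bar V_{jp}V_{kp}$; in matrix form $B=\bar V V^t$, exactly as recorded in the proof of Corollary~\ref{corollary:Gaussian}.

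Next I would compare $B$ with $VV^*$. Since $(VV^*)_{jk}=\sum_{p=1}^d V_{jp}\,\ol{V_{kp}}$, taking complex conjugates entry by entry gives $\ol{(VV^*)_{jk}}=\sum_{p=1}^d\bar V_{jp}V_{kp}=B_{jk}$. Thus $B=\ol{VV^*}$, the entry-wise complex conjugate of $VV^*$. In particular $B$ has real entries if and only if $VV^*$ has real entries.

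Finally I would invoke Theorem~\ref{Theorem: iff on ONplus}, according to which $\eta$ admits a generating functional precisely when $B$ has real entries; combined with the previous identification, this is equivalent to $VV^*$ having real entries, which is the assertion. The only point requiring care --- and it is a very minor one --- is to keep track of the conjugation convention of the inner product and of the transpose/conjugate operations, so as to be sure that $B$ and $VV^*$ differ by an entry-wise conjugation rather than, say, a transposition; no genuine obstacle arises.
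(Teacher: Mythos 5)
Your proposal is correct and matches the paper's (implicit) argument exactly: the corollary is stated as the direct specialisation of Theorem~\ref{Theorem: iff on ONplus} to $H=\bc$, where $B=\bar{V}V^t=\overline{VV^*}$, so that $B$ has real entries precisely when $VV^*$ does. The bookkeeping of the conjugation convention is the only content, and you have it right.
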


We are ready to show the main result of this section. 
\begin{tw}\label{thm-non-generic O_F}
Let $d\in \bn, d\geq3$.  Then the compact quantum group $O_d^+$ has none of the properties (GC), (NC) and (LK).
\end{tw}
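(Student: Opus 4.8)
The plan is to imitate the proof of Theorem~\ref{thm-non-generic U_F}, with the unitary criteria replaced by their orthogonal counterparts: Proposition~\ref{prop_v_onplus} for which matrices $V$ define cocycles, Corollary~\ref{corollary:GaussianOrthogonal} for when a $\bc$-valued cocycle admits a generating functional, and Theorem~\ref{Theorem: iff on ONplus} for the general criterion. I would first produce all three obstructions for $d=3$ using $\bc$-valued cocycles, and then transport them to arbitrary $d\ge 3$ along the ``corner'' unital $*$-homomorphism $\pi:\Pol(O_d^+)\to\Pol(O_3^+)$ sending $u_{jk}\mapsto v_{jk}$ for $j,k\in\{1,2,3\}$ and $u_{jk}\mapsto\delta_{jk}1$ otherwise; this is well defined by universality, since the image of the fundamental matrix is the self-conjugate unitary $\mathrm{diag}(v,I_{d-3})$, and $\pi$ preserves the counit.

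For $d=3$ the two relevant one-dimensional representations are $\Cou(\cdot)\id$ (with $R=I_3$) and the purely non-Gaussian representation $\gamma$ given by the real orthogonal matrix $R=-I_3$ (purely non-Gaussian because $-I_3\ne I_3$ forces $H_G=\{0\}$). By relation~\eqref{eq_relations_RV_orth} a cocycle matrix $V$ must be antisymmetric in the first case and symmetric in the second, while by Corollary~\ref{corollary:GaussianOrthogonal} the resulting cocycle admits a generating functional exactly when $VV^*$ is real. To break (GC) I would take an antisymmetric $V_G$ with $V_GV_G^*$ non-real, and to break (NC) a symmetric $V_N$ with $V_NV_N^*$ non-real, for instance
\[
V_G=\begin{pmatrix} 0&1&i\\ -1&0&0\\ -i&0&0\end{pmatrix},\qquad
V_N=\begin{pmatrix} 0&1&-i\\ 1&0&0\\ -i&0&0\end{pmatrix},
\]
for which $V_GV_G^*$ and $V_NV_N^*$ each have off-diagonal entries $\pm i$; write $\eta_G$ and $\eta_N$ for the corresponding cocycles.

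The crux is (LK). Here I would form $H=\bc\oplus\bc$, $\rho=\Cou\oplus\gamma$ and $\eta=\eta_G\oplus\eta_N$, so that the matrix $B$ of~\eqref{Bdef} for $\eta$ splits as $B=V_GV_G^*+V_NV_N^*$. The decisive point is that $V_G$ and $V_N$ above are chosen with opposite imaginary parts, so that this sum is the real (indeed diagonal) matrix $\mathrm{diag}(4,2,2)$, even though neither summand is real. By Theorem~\ref{Theorem: iff on ONplus} the combined cocycle $\eta$ then admits a generating functional $\psi$; yet $\psi$ can carry no L\'evy--Khintchine decomposition, since such a decomposition would force the Gaussian pair $(\Cou,\eta_G)$ and the purely non-Gaussian pair $(\gamma,\eta_N)$ each to admit a generating functional, contradicting the previous paragraph.

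Finally I would lift all three statements to $O_d^+$ by pulling back along $\pi$: the cocycles $\wt\eta_G=\eta_G\circ\pi$, $\wt\eta_N=\eta_N\circ\pi$ and their sum are again $\bc$-valued cocycles on $\Pol(O_d^+)$, now for the representations $\Cou(\cdot)\id$ and $\wt\gamma=\gamma\circ\pi$ (still purely non-Gaussian), and their $V$-matrices are simply $V_G,V_N$ placed in the top-left $3\times 3$ corner with zeros elsewhere. Consequently the associated matrices $B$ are the corresponding $3\times 3$ matrices in the corner, so the reality (equivalently, symmetry) criterion of Theorem~\ref{Theorem: iff on ONplus} and Corollary~\ref{corollary:GaussianOrthogonal} transfers verbatim, and the three obstructions survive for every $d\ge 3$. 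The main thing to get right is the simultaneous choice of $V_G$ and $V_N$ in the (LK) step -- they must individually violate the reality condition while their sum satisfies it -- together with the routine verification that $\pi$ is well defined and isolates the $3\times 3$ corner of $B$.
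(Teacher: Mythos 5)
Your proposal is correct and follows essentially the same route as the paper: a Gaussian $\bc$-valued cocycle with non-real $VV^*$ kills (GC), a one-dimensional purely non-Gaussian representation with a cocycle whose $VV^*$ is non-real kills (NC), and their direct sum — chosen so the imaginary parts of the two $B$-matrices cancel — admits a generating functional by Theorem \ref{Theorem: iff on ONplus} but no L\'evy-Khintchine decomposition. The only (harmless) differences are cosmetic: you use $R=-I_3$ and different explicit matrices $V_G,V_N$ (both of which I checked satisfy \eqref{eq_relations_RV_orth} and give $V_GV_G^*+V_NV_N^*=\mathrm{diag}(4,2,2)$), and you package the passage to general $d\ge 3$ through the corner homomorphism $\pi$, whereas the paper simply embeds the $3\times 3$ data as a block of a $d\times d$ matrix and applies the same criteria directly.
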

\begin{proof}
\textbf{No property (GC)}: 
consider the block matrix 
\[V= \begin{pmatrix}
V_1 & 0 \\ 0 & 0
\end{pmatrix},\]
with 
\[	
V_1=	\begin{pmatrix}0&1& 1 \\-1 &0 & i & \\ -1 & -i & 0 \end{pmatrix}.\]
Proposition \ref{prop_v_onplus} implies, as $V=-V^t$, that the formula \eqref{RVdef} defines a $\bc$-valued Gaussian cocycle on $\Pol(O_d^+)$. On the other hand, as 
\[V_1 V_1^* = \begin{pmatrix}
2 & -i & i \\ i & 2 & 1 \\ -i &1 &2
\end{pmatrix},\]
Corollary \ref{corollary:GaussianOrthogonal} shows that this cocycle does not admit a generating functional.
\medskip
	
\textbf{No property (NC)}: 
consider the purely non-Gaussian representation $\gamma$ of $\Pol(O_d^+)$ on $\bc$, given by the matrix 
\[R=\begin{pmatrix} -1 & 0 & 0 & \cdots \\ 0 & 0 & 1 &\cdots \\ 0 & 1 & 0 &\cdots \\\vdots & \vdots & \vdots & \ddots 
\end{pmatrix}\]
and the formula
\[\gamma(u_{jk}) = R_{jk}, \;\;\;\;\;\; j,k=1, \ldots, d.\]
Further consider the block matrix
\[V'= \begin{pmatrix}
V_1' & 0 \\ 0 & 0
\end{pmatrix},\]
with 
\[	
V_1'=	\begin{pmatrix}i&1& -1 \\1 &0 & 0& \\ -1 & 0 & 0 \end{pmatrix}.\] 
Proposition \ref{prop_v_onplus} implies, as $V=-\bar{R}V^t = - (R^*V)^t$, that the formula \eqref{RVdef} defines a $\bc$-valued $\gamma$-$\Cou$-cocycle on $\Pol(O_d^+)$. On the other hand, as 
\[V_1' (V_1')^* = 
\begin{pmatrix}
3 & i & -i \\ -i & 1 & -1 \\ i &-1 &1
\end{pmatrix}
,\]
once again Corollary \ref{corollary:GaussianOrthogonal} shows that this cocycle does not admit a generating functional.

\medskip

\textbf{No property (LK)}: 
The proof now proceeds as in Theorem \ref{thm-non-generic U_F}; we consider the direct sum of the representations and cocycles in the last two cases, and observe that the corresponding matrix $B$ of \eqref{Bdef} for the resulting cocycle is the block-diagonal matrix of the form
\[B= \begin{pmatrix}
B_1 & 0 \\ 0 & 0
\end{pmatrix},\]
with 
\[	
B_1=V_1V_1^* + V_1'(V_1')^*	=\begin{pmatrix}5&0& 0 \\0 &3 & 0  \\ 0 & 0 & 3 \end{pmatrix}.\]
Thus by Theorem \ref{Theorem: iff on ONplus} the last cocycle admits a generating functional.
As before, the earlier parts of the proof imply that this generating functional cannot admit a L\'evy-Khintchine decomposition.
\end{proof}

The result above can be easily adopted, using techniques similar to these in Theorem \ref{thm-non-generic U_F}, to show that the quantum group $O_F^+$ with $F$ of the form 
\[\begin{pmatrix}
0 & D & 0 \\ D^{-1} & 0 & 0 \\ 0 & 0 & I_k
\end{pmatrix},\]
with $k\geq 3$, does not have any of the properties (GC), (NC) and (LK). The case of general $O_F^+$, with $F\in GL_d(\bc)$, $d\geq 3$, seems however beyond our reach for the moment. 

\subsection{Further remarks}

Methods developed in this section lead to certain further observations. 

Recall first that for any compact quantum group $\QG$ a cocycle $\eta:\Pol(\QG)\to H$ is said to be \emph{real} (\cite{Kyed}) if for any  $a,b\in\Pol(\QG)$ 
\begin{equation}\label{eq_real_cocycle}
\la\eta(a),\eta(b)\ra=\la\eta\big(S(b)^*\big),
\eta\big(S(a^*)\big)\ra\;\;\;
\end{equation}
where $S$ denotes the antipode of $\Pol(\QG)$. In \cite{DFKS} we showed that any real cocycle admits a generating functional. 
The following example shows that the reality of $\eta$ is not necessary; it 
also provides an example of noncommutative Gaussian process on $O_4^+$.
  
\begin{example}
Consider the following matrix $V\in M_4(\bc^2)$:
\[ V=\begin{pmatrix}
\binom{0}{0} & \binom{0}{0} & \binom{1}{0}& \binom{0}{1} \\[3pt]
\binom{0}{0} & \binom{0}{0} & \binom{i}{0}& \binom{0}{-i} \\[3pt]
\binom{-1}{0} & \binom{-i}{0} & \binom{0}{0}& \binom{0}{0} \\[3pt]
\binom{0}{-1} & \binom{0}{i} & \binom{0}{0}& \binom{0}{0} 
\end{pmatrix}.\]
Proposition \ref{prop_v_onplus} shows that the matrix $V$, as satisfying the condition $V=-V^t$, defines
a Gaussian cocycle $\eta:\Pol(O_4^+)\to \bc^2$ via the formula \eqref{RVdef}. A straightforward computation shows that then the corresponding matrix $B$, defined as in \eqref{Bdef} is equal to $2I_4$. Thus Theorem \ref{Theorem: iff on ONplus} shows that $\eta$ admits a generating functional $\psi:\Pol(O_4^+) \to \bc$. On the other hand $\eta$ is not real, as we have for example
$$\la\eta(u_{23}),\eta(u_{31})\ra = i \neq -i  =\la\eta(u_{13}),\eta(u_{32})\ra=
\la\eta(S(u_{31})^*),\eta(S(u_{23}^*))\ra. $$
We can also check that the functional $\psi$ constructed for $\eta$ as  in Theorem \ref{Theorem: iff on ONplus} is non-tracial: indeed, we have 
\[\psi(u_{23}u_{31}) = \la \eta(u_{23}), \eta(u_{31}) \ra \neq \la \eta(u_{31}), \eta(u_{23}) \ra = \psi(u_{31}u_{23}).\] 
 It was shown in \cite[Proposition 5.7]{FKS} that a Gaussian L\'evy process is commutative if and only its generating functional is a trace. So this shows that the Gaussian L\'evy process associated to this cocycle is indeed not commutative, as we claimed above.
\end{example}

Using the techniques of this section one can show that the \emph{half-liberated orthogonal quantum group} $O_d^*$ (see \cite{halfliberated}) for $d\geq 3$ does not have  properties (GC) or  (NC).

Finally let $\QG$ be a compact quantum group with a compact quantum subgroup $\QH$ (i.e.\ we have a surjective Hopf $^*$-algebra homomorphism $\pi:\Pol(\QG) \to \Pol(\QH)$). It is easy to see that any generating functional (any  cocycle, or any Sch\"{u}rmann triple) on $\QH$ can be transported in an obvious way to $\QG$ (the fact we used several times above). However  the relationship between properties 
(GC), (NC) or (LK) for the pair $(\QG, \QH)$ are far from straightforward. Indeed, if we consider the sequence of quantum subgroups $SU_q(2) \subset SU_q(3) \subset U_F^+$ (with $F$ the $3\times 3$ matrix from Example \ref{ex_suqd}), we see from the results of the last two sections that $SU_q(2)$ has (GC), $SU_q(3)$ does not have (GC), and $U_3^+(F)$ has (GC) again.
So far we do not know any case where $\QG$ would have property (LK) and its quantum subgroup $\QH$ would fail to have this property.

\section{Second cohomology group of $U_d^+$ and $O_d^+$ with trivial coefficients}\label{sec_cohom}

It is well-known that the L\'evy-Khintchine decomposition problem for a given unitary quotient algebra $A$ is closely related to the computation of the second Hochschild cohomology 
group of $A$ with trivial coefficients (see for example \cite{FGT15}). In this section we show certain consequences of our earlier results for the corresponding second Hochschild cohomology 
groups for $\Pol(O_d^+)$ and $\Pol(U_d^+)$. 

Let us briefly recall the definitions (we refer to \cite{FGT15} and to the survey \cite{Bichonsurvey} for 
more details). Given a unital $*$-algebra $A$ with a character $\Cou$ (e.g.\ a unitary quotient algebra), any representation $\rho:A\to B(H)$ on a Hilbert space $H$ allows to view $H$ as an $A$-bimodule with the left and right actions $a.z.b=\rho(a)\Cou(b)z$ for $a,b\in A$ and $v\in H$. As a special case, we can consider $\bc$ as an $A$-bimodule with the `trivial' left and right actions, i.e.\ 
$a.z.b=\Cou(a)\Cou(b)z$ for $a,b\in A$ and $z\in \bc$. 

For each $n \in \bn$ the  associated \emph{coboundary} operator $\partial^{n-1}: L(A^{\ot (n-1)};H) \to L(A^{\ot n}; H)$ (where we write $A^{\ot 0}: = \bc$) is determined by the formula
\begin{align*}
(\partial^{n-1} \phi)(a_1\otimes \ldots \otimes a_n) &= 
\rho(a_1)\phi(a_2\otimes \ldots \otimes a_n) + \sum_{j=1}^{n-1} (-1)^j 
\phi(a_1\otimes \ldots \otimes (a_ja_{j+1})\otimes  \ldots \otimes a_n)
\\ & + (-1)^n \phi(a_1\otimes \ldots \otimes a_{n-1})\Cou(a_n)
\end{align*}
for $\phi \in L(A^{\ot (n-1)}; H)$, $a_1, \ldots, a_n \in A$. We naturally have $\partial^n\circ \partial^{n-1}=0$.
We define further the vector space  of \emph{$n$-cocycles} 
$Z^n(A,{}_\rho H_\Cou)=\{ \phi \in L(A^{\ot n}; H):  \partial^n \phi=0\}$,  
the vector space of \emph{$n$-coboundaries}  $B^n(A,{}_\rho H_\Cou)=\{ 
\partial^{n-1} \psi: \psi\in L(A^{\ot (n-1)},H)\}$, and the 
\emph{$n$th-Hochschild cohomology group}  
$$H^n(A,{}_\rho H_\Cou)=Z^n(A,{}_\rho H_\Cou)/B^n(A,{}_\rho H_\Cou).$$
The 
\emph{$n$th-Hochschild cohomology group with trivial coefficients} is the special case
$$H^n(A,{}_\Cou\bc_\Cou)=Z^n(A,{}_\Cou\bc_\Cou)/B^n(A,{}_\Cou\bc_\Cou).$$
The terrminology `trivial coefficients' refers to the fact that we could replace $\bc$ above by any $A$-bimodule; and further in the case $A= \bc[\Gamma]$ the $A$-bimodule $_\Cou\bc_\Cou$ can be viewed equivalently as a trivial $\Gamma$-bimodule.

Note that if $A$ is a unitary quotient algebra, then by construction $H^n(A,{}_\rho H_\Cou)$ is in fact a vector space over $\bc$. Moreover $H^0(A,{}_\Cou\bc_\Cou) \cong \bc$ and 
$H^1(A,{}_\Cou\bc_\Cou)$ coincides with the space of 
$\bc$-valued Gaussian 
cocycles defined in Section \ref{ssec_schurman}. 

 To simplify the notation for a compact matrix quantum group $\QG$ and $n \in 
\bn$ we will simply write $H^n(\QG)$ for $H^n (\Pol(\QG),{}_\Cou\bc_\Cou )$; and 
similarly $Z^n(\QG)$ and $B^n(\QG)$ for  $Z^n (\Pol(\QG),{}_\Cou\bc_\Cou )$,  
$B^n (\Pol(\QG),{}_\Cou\bc_\Cou )$.

Then it follows from 
Propositions \ref{prop_v_unplus} and \ref{prop_v_onplus}, respectively, that for each $d \in \bn$ we have  
$$H^1 (U_d^+)\cong M_d(\bc), \quad \mbox{and} \quad 
H^1(O_d^+)\cong \{V\in M_d(\bc): V^t=-V\}\cong \bc^{\frac{d(d-1)}{2}}$$ 
(see also \cite{CHT}). Similarly the proof of Theorem \ref{thm_au_gauss} and 
arguments similar to those in the proof of Proposition \ref{prop_suqn_gc}  show 
that for $d \in \bn$ and $F\in GL_d(\bc)$ generic we have $H^1(U^+_F)\cong 
\bc^d$ 
and for $q\in (0,1)$, $d \geq 2$ we have $H^1(SU_q(d))\cong \bc^{d-1}$.

If for a given compact matrix quantum group $\QG$ the space $H^2(\QG)$ is trivial, then $\QG$ has all the properties (GC), (NC) and (LK), see \cite{FGT15}. Thus the results of the last section imply that  $H^2(U_d^+)$ is non-trivial for $d \geq 2$ and  $H^2(O_d^+)$ is non-trivial for $d \geq 3$. 

We will now recall a few general facts which will be helpful in computing $H^2(\QG)$ in the rest of this section. First note that one can restrict attention to \emph{normalised} 2-cocycles, i.e.\ those $c \in Z^2(\QG)$ which satisfy the condition $c(1\ot 1) = 0$. Indeed, given a general cocycle $c\in Z^2(\QG)$ one can always pass to the normalised 2-cocycle $c':=c - c(1 \otimes 1) \partial \Cou $, which naturally yields the same class in $H^2(\QG)$. Furthermore it is easy to check that if a 2-cocycle $c$ is normalised, then in fact $c(1 \otimes a)=c(a \otimes 1)=0$ for all $a \in \Pol(\QG)$.

The following result is Lemma 5.4 of \cite{bfg}.

\begin{lem}\label{lemmabfg}
	Suppose that $(A, \Cou)$ is a unital $^*$-algebra with a character, $\psi:A \to \bc$ is a linear functional with $\psi(1)=0$ and $c \in Z^2(A, {}_\Cou\bc_\Cou )$ is a normalised 2-cocycle. Define the map $T: A \to \textup{End}(\bc\oplus \ker \Cou \oplus \bc)$ via the formula 
\begin{equation} \label{Tdefined} T(a) = \begin{pmatrix} 
	\Cou(a) & c(a \otimes -) & - \psi(a) \\
	0 & a \cdot - & a-\Cou(a) 1\\
	0 & 0 & \Cou(a)
	\end{pmatrix}, \;\;\;\; a \in A.	
\end{equation}
Then the map $T$ is a homomorphism if and only if $c = \partial \psi$.
\end{lem}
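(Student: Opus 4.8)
The plan is to treat the statement as a direct verification of the homomorphism property by block-matrix multiplication, and to read off from the comparison $T(ab)=T(a)T(b)$ exactly which of the scalar identities is genuinely at stake. First I would fix notation for the three off-diagonal blocks in \eqref{Tdefined}: writing a generic vector of $\bc\oplus\ker\Cou\oplus\bc$ as a triple $(\lambda,x,\mu)$ with $x\in\ker\Cou$, the operator $T(a)$ acts by
\[
T(a)(\lambda,x,\mu)=\bigl(\Cou(a)\lambda+c(a\ot x)-\psi(a)\mu,\; ax+\mu(a-\Cou(a)\one),\; \Cou(a)\mu\bigr).
\]
Before computing any product I would check that $T(a)$ really is an endomorphism of $\bc\oplus\ker\Cou\oplus\bc$: the map $x\mapsto ax$ preserves $\ker\Cou$ since $\Cou(ax)=\Cou(a)\Cou(x)=0$, and $a-\Cou(a)\one$ lies in $\ker\Cou$, so every entry lands in the prescribed summand.

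Next I would multiply the two upper-triangular matrices and compare the result entry-by-entry with $T(ab)$. The key observation is that most of the blocks match unconditionally: the $(1,1)$ and $(3,3)$ entries agree because $\Cou$ is a character; the $(2,2)$ entry reduces to associativity $(ab)x=a(bx)$; and the $(2,3)$ entry unwinds to the identity $ab-\Cou(ab)\one=a(b-\Cou(b)\one)+\Cou(b)(a-\Cou(a)\one)$, valid in any augmented algebra. Consequently the homomorphism property is equivalent to the agreement of the two remaining blocks, $(1,2)$ and $(1,3)$.

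For the $(1,2)$ block, testing against $x\in\ker\Cou$ produces the requirement $c(ab\ot x)=\Cou(a)c(b\ot x)+c(a\ot bx)$, which is precisely the $2$-cocycle identity $\partial^2 c=0$ evaluated at $a\ot b\ot x$; the term $c(a\ot b)\Cou(x)$ drops out because $\Cou(x)=0$. Hence this block holds for free, using only the hypothesis $c\in Z^2(A,{}_\Cou\bc_\Cou)$. The decisive block is $(1,3)$: here I would invoke normalisation, $c(a\ot\one)=0$, so that the contribution $c\bigl(a\ot(b-\Cou(b)\one)\bigr)$ collapses to $c(a\ot b)$, and matching the $(1,3)$ entries yields
\[
c(a\ot b)=\Cou(a)\psi(b)+\psi(a)\Cou(b)-\psi(ab)=(\partial\psi)(a\ot b).
\]
Required for all $a,b\in A$, this is exactly the assertion $c=\partial\psi$, and it establishes both implications at once.

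I do not anticipate a genuine obstacle, as the content is essentially bookkeeping. The only subtlety worth flagging is invoking the right hypothesis in the right place: the cocycle property of $c$ is what clears the $(1,2)$ block, whereas the normalisation $c(\,\cdot\,\ot\one)=0$ is what makes the $(1,3)$ block simplify cleanly to the coboundary $\partial\psi$; without normalisation the identification would pick up a spurious extra term.
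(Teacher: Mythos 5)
Your verification is correct and is the standard direct computation; the paper itself offers no proof of this lemma (it is quoted verbatim as Lemma 5.4 of \cite{bfg}), so there is nothing internal to compare against. You isolate correctly where each hypothesis enters: the cocycle identity $\partial^2 c=0$ together with $\Cou(x)=0$ clears the $(1,2)$ block, normalisation collapses $c\bigl(a\ot(b-\Cou(b)\one)\bigr)$ to $c(a\ot b)$ in the $(1,3)$ block so that matching there is exactly $c(a\ot b)=\Cou(a)\psi(b)-\psi(ab)+\psi(a)\Cou(b)=(\partial\psi)(a\ot b)$, and the remaining blocks match unconditionally, which gives both implications at once.
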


The next lemma follows from an elementary computation (see for example the proof of Proposition 3.1 in \cite{FGT15}).

\begin{lem} \label{lem_2-cocycle} Let $A$ be a unitary quotient algebra and let $\rho:A \to B(H)$ be a representation of $A$ on a Hilbert space $H$.
If $\eta_1,\eta_2: A \to H$ are $\rho$-$\Cou$-cocycles on $A$, then the formula $$K(\eta_1,\eta_2)(a\otimes b):=\langle \eta_1(a^*),\eta_2(b)\rangle, \;\;\;a, b \in A$$ determines a 
2-cocycle $K(\eta_1,\eta_2)$ in $Z_2(A,{}_\Cou\bc_\Cou )$. 
\end{lem}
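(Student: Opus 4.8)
The plan is to verify directly that $K := K(\eta_1,\eta_2)$ is annihilated by the Hochschild coboundary operator $\partial^2$ attached to the trivial bimodule ${}_\Cou\bc_\Cou$, i.e.\ that $K \in Z^2(A,{}_\Cou\bc_\Cou)$. Specialising the general coboundary formula recalled above to $n=3$ and to trivial coefficients, I would start from the identity, valid for all $a_1,a_2,a_3\in A$,
\[
(\partial^2 K)(a_1\ot a_2\ot a_3)=\Cou(a_1)K(a_2\ot a_3)-K(a_1a_2\ot a_3)+K(a_1\ot a_2a_3)-K(a_1\ot a_2)\Cou(a_3),
\]
and substitute the definition $K(a\ot b)=\la\eta_1(a^*),\eta_2(b)\ra$ into each of the four terms.

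The two interior terms are the ones to unfold. For $K(a_1a_2\ot a_3)$ I would use $(a_1a_2)^*=a_2^*a_1^*$ together with the cocycle identity \eqref{eq_def_eta} for $\eta_1$, writing $\eta_1(a_2^*a_1^*)=\rho(a_2^*)\eta_1(a_1^*)+\eta_1(a_2^*)\Cou(a_1^*)$; for $K(a_1\ot a_2a_3)$ I would apply the same identity to $\eta_2$, writing $\eta_2(a_2a_3)=\rho(a_2)\eta_2(a_3)+\eta_2(a_2)\Cou(a_3)$. The crucial structural inputs are that $\rho$ is a $^*$-representation, which yields $\la\rho(a_2^*)\eta_1(a_1^*),\eta_2(a_3)\ra=\la\eta_1(a_1^*),\rho(a_2)\eta_2(a_3)\ra$, and that $\Cou$ is hermitian, so that pulling the scalar $\Cou(a_1^*)$ out of the first slot of the inner product and using sesquilinearity returns $\Cou(a_1)$.

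With these substitutions everything pairs off: the $\rho(a_2)$-contribution from $K(a_1a_2\ot a_3)$ cancels the $\rho(a_2)$-contribution from $K(a_1\ot a_2a_3)$; the scalar $\Cou(a_1)$-contribution from $K(a_1a_2\ot a_3)$ cancels the leading term $\Cou(a_1)K(a_2\ot a_3)$; and the $\Cou(a_3)$-contribution from $K(a_1\ot a_2a_3)$ cancels the final term $K(a_1\ot a_2)\Cou(a_3)$. Hence $(\partial^2 K)(a_1\ot a_2\ot a_3)=0$ for all $a_1,a_2,a_3\in A$, and since $K$ is visibly bilinear this establishes $K\in Z^2(A,{}_\Cou\bc_\Cou)$. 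There is no genuine obstacle here: the only point requiring care is the bookkeeping of the conjugation produced when a scalar is extracted from the first argument of the inner product, combined with the hermitianity $\Cou(a^*)=\overline{\Cou(a)}$ used to convert $\Cou(a_1^*)$ into $\Cou(a_1)$. This is precisely the elementary verification indicated via the reference in the statement.
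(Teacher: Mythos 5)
Your verification is correct and is exactly the elementary computation the paper has in mind (the paper itself only cites it, referring to the proof of Proposition 3.1 in [FGT]): expanding $\partial^2 K$, applying the cocycle identity to $\eta_1(a_2^*a_1^*)$ and $\eta_2(a_2a_3)$, and using that $\rho$ is a $^*$-representation together with $\Cou(a^*)=\overline{\Cou(a)}$ and conjugate-linearity in the first slot makes all four terms cancel in pairs, as you describe. No gaps; nothing further is needed.
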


\subsection{Computing $H^2(U_d^+)$}

Let $d\in \bn$. We will compute now the group $H^2(U_d^+)$. Begin with the following lemma, describing properties of arbitrary normalised 2-cocycles in $Z^2(U_d^+)$.

\begin{lem}\label{lem_cocycle_condition}
Let $c\in Z^2(U_d^+)$ be normalised. Then for all $j,k\in\{1,\ldots, d\}$ we have relations
\begin{align}
\sum_{p=1}^d c(u^*_{pj} \otimes u_{pk}) &= \sum_{p=1}^d c(u_{jp} \otimes 
u^*_{kp}), 
\label{eq_z1}
	\\
\sum_{p=1}^d c(u^*_{jp} \otimes u_{kp}) & = \sum_{p=1}^d c(u_{pj} 
\otimes u^*_{pk}). 
\label{eq_z2}
\end{align}
\end{lem}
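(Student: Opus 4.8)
The plan is to extract both relations directly from the defining $2$-cocycle identity, using the four unitarity relations of $\Pol(U_d^+)$ to collapse certain internal products to scalars and the normalisation of $c$ to discard the leftover boundary terms. Recall that $c\in Z^2(U_d^+)$ means $\partial^2 c=0$, which with trivial coefficients (so $\rho=\Cou$) reads
\[
\Cou(a_1)\,c(a_2\ot a_3)-c(a_1a_2\ot a_3)+c(a_1\ot a_2a_3)-c(a_1\ot a_2)\,\Cou(a_3)=0
\]
for all $a_1,a_2,a_3\in\Pol(U_d^+)$. Since $c$ is normalised we have $c(1\ot a)=c(a\ot 1)=0$ for every $a$, and these identities will be used to kill any term in which a merged product becomes a multiple of $1$.

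First I would prove \eqref{eq_z1} by feeding the cocycle identity the triple $a_1=u_{ij}^*$, $a_2=u_{ip}$, $a_3=u_{kp}^*$ and summing over both $i$ and $p$. The two merged terms then carry the internal products $\sum_i u_{ij}^*u_{ip}=(u^*u)_{jp}=\delta_{jp}$ and $\sum_p u_{ip}u_{kp}^*=(uu^*)_{ik}=\delta_{ik}$, both coming from relation (R1); after collapsing, the surviving tensors are $c(1\ot u_{kj}^*)$ and $c(u_{kj}^*\ot 1)$, which vanish by normalisation. The two boundary terms survive: the $\Cou(a_1)$-term gives $\sum_p c(u_{jp}\ot u_{kp}^*)$ and the $\Cou(a_3)$-term gives $\sum_p c(u_{pj}^*\ot u_{pk})$, and equating them is exactly \eqref{eq_z1}.

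For \eqref{eq_z2} I would run the same argument on the ``conjugate'' triple $a_1=u_{ij}$, $a_2=u_{ip}^*$, $a_3=u_{kp}$, again summing over $i$ and $p$. This time the two merged products are $\sum_i u_{ij}u_{ip}^*=(u^t\bar{u})_{jp}=\delta_{jp}$ and $\sum_p u_{ip}^*u_{kp}=(\bar{u}u^t)_{ik}=\delta_{ik}$, which collapse by the relation (R3) specialised to $F=I_d$ (where it reads $u^t\bar{u}=I=\bar{u}u^t$); normalisation again annihilates the two resulting boundary terms, and the two $\Cou$-terms yield $\sum_p c(u_{jp}^*\ot u_{kp})$ and $\sum_p c(u_{pj}\ot u_{pk}^*)$, i.e.\ precisely \eqref{eq_z2}.

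There is no genuine obstacle once the right triples are singled out: the argument is pure bookkeeping, and the only thing that requires care is matching indices so that exactly two of the four unitarity relations of $U_d^+$ are triggered by the two merged products while the outer arguments remain bare generators. The sole point to monitor is which of $uu^*$, $u^*u$, $u^t\bar{u}$, $\bar{u}u^t$ is invoked where, so that the summation indices reassemble into the exact patterns appearing on the two sides of \eqref{eq_z1} and \eqref{eq_z2}; the choice of triples above is designed so that the first relation uses (R1) twice and the second uses (R3) twice.
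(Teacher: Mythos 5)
Your proposal is correct and follows essentially the same route as the paper: both apply the $2$-cocycle identity $\partial c=0$ to a double sum of elementary tensors of generators chosen so that the two ``merged'' terms collapse to scalars via the unitarity relations (and then vanish by normalisation), leaving the two boundary terms as the desired equality. The only cosmetic difference is the choice of triples (the paper uses $\sum_{p,r}u_{jp}\ot u_{rp}^*\ot u_{rk}$ and $\sum_{p,r}u_{pj}\ot u_{pr}^*\ot u_{kr}$, while you use the adjoint-reversed versions), which changes nothing of substance.
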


\begin{proof}
Fix $j,k \in\{1,\ldots, d\}$ and  apply $\partial c$ to $\sum_{p,r=1}^d u_{jp} \otimes u^*_{rp} \otimes u_{rk}$ to obtain
\begin{align*}
0&=\sum_{p,r=1}^d \partial c(u_{jp}\otimes u_{rp}^* \otimes u_{rk}) =
	\\
&= \sum_{p,r=1}^d \left[ \Cou(u_{jp}) c( u_{rp}^* \otimes u_{rk}) -
c(u_{jp}u_{rp}^* \otimes u_{rk})+
c(u_{jp}\otimes u_{rp}^* u_{rk})-
c(u_{jp}\otimes u_{rp}^*) \Cou(u_{rk}) \right]
	\\
&= \sum_{r=1}^d c( u_{rj}^* \otimes u_{rk}) -
\sum_{r=1}^d  c(\sum_{p=1}^d u_{jp}u_{rp}^* \otimes u_{rk})+
\sum_{p=1}^d  c(u_{jp}\otimes \sum_{r=1}^d u_{rp}^* u_{rk})-
\sum_{p=1}^d  c(u_{jp}\otimes u_{kp}^*)
	\\
&\stackrel{(*)}{=} \sum_{r=1}^d c( u_{rj}^* \otimes u_{rk}) -
\sum_{r=1}^d  c(1 \otimes u_{rk})+
\sum_{p=1}^d  c(u_{jp}\otimes 1)-
\sum_{p=1}^d  c(u_{jp}\otimes u_{kp}^*)
	\\
&=\sum_{r=1^d} c( u_{rj}^* \otimes u_{rk}) -
\sum_{p=1^d}  c(u_{jp}\otimes u_{kp}^*),
\end{align*}
where in  $(*)$ we used the unitarity relations $u^*u= u u^* = I$. This shows \eqref{eq_z1}.

Similarly applying $\partial c$ to the sum	$\sum_{p,r=1}^d u_{pj} \otimes u^*_{pr} \otimes u_{jr}$ and using the fact that $\bar{u}u^t=u^t\bar{u}=I$ yields \eqref{eq_z2}.

\end{proof}

The next result allows us to characterise coboundaries in $Z^2(U^+_d)$.

\begin{lem} \label{lem_coboundary_condition}
	A 2-cocycle $c\in Z^2(U_d^+)$ is a coboundary if and only if for each $j,k \in \{1, \ldots,d\}$	
    \begin{equation} \label{eq_coboundary_condition}
\sum_{p=1}^d c(u_{pj}^*\otimes u_{pk}) = \sum_{p=1}^d c(u_{kp}^*\otimes u_{jp}).
	\end{equation}
\end{lem}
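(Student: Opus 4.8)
I would prove the two implications separately; both turn on identifying the sum $\sum_{p} c(u_{pj}^*\otimes u_{pk})$ with the values of a putative functional $\psi$ on the generators. Throughout I may assume $c$ is normalised, since replacing $c$ by $c - c(1\otimes 1)\,\partial\Cou$ changes neither the coboundary property nor condition \eqref{eq_coboundary_condition} (both sides of the latter shift by the same multiple of $\delta_{jk}$, because $\sum_p \partial\Cou(u_{pj}^*\otimes u_{pk})=\delta_{jk}$ and likewise for the other side).

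For the forward implication, suppose $c=\partial\psi$ with $\psi(1)=0$. Expanding $c(u_{pj}^*\otimes u_{pk}) = \Cou(u_{pj}^*)\psi(u_{pk}) - \psi(u_{pj}^*u_{pk}) + \psi(u_{pj}^*)\Cou(u_{pk})$, summing over $p$, and using $\Cou(u_{pj})=\delta_{pj}$ together with the unitarity relation $\sum_p u_{pj}^*u_{pk}=\delta_{jk}1$ and $\psi(1)=0$, the left-hand side of \eqref{eq_coboundary_condition} collapses to $\psi(u_{jk})+\psi(u_{kj}^*)$. The same computation applied to the right-hand side, now invoking $\sum_p u_{kp}^*u_{jp}=\delta_{jk}1$ (i.e.\ $\bar u u^t=I$), again produces $\psi(u_{jk})+\psi(u_{kj}^*)$, so the two sides coincide.

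For the converse, I would construct $\psi$ by lifting to the free $^*$-algebra $\mathcal F$ on the symbols $x_{jk}$, of which $\Pol(U_d^+)$ is the quotient by a surjection $q$. Pulling back, $c_0:=c\circ(q\otimes q)$ is a normalised $2$-cocycle on $\mathcal F$. Since $\mathcal F$ carries no relations, I may assign $\psi_0(x_{jk})$ and $\psi_0(x_{jk}^*)$ arbitrarily and extend to all words by the recursion $\psi_0(ab)=\Cou(a)\psi_0(b)+\psi_0(a)\Cou(b)-c_0(a\otimes b)$; an induction on word length using $\partial^2 c_0=0$ shows this is well defined and satisfies $c_0=\partial\psi_0$. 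It then remains to tune the free parameters so that $\psi_0$ descends to $\Pol(U_d^+)$, i.e.\ vanishes on $\ker q$. Here the crucial point is that $c_0(a\otimes b)=0$ whenever $q(a)=0$ or $q(b)=0$; combined with $\ker q\subseteq\ker\Cou$ and the recursion this forces $\psi_0(arb)=\Cou(a)\Cou(b)\psi_0(r)$ for all $a,b\in\mathcal F$ and $r\in\ker q$, so $\psi_0$ kills the whole ideal as soon as it kills the generating relations $uu^*=u^*u=I$ and $u^t\bar u=\bar u u^t=I$.

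Evaluating $\psi_0$ on these relations via the recursion turns each into one linear equation in the generator values. Rewriting the emerging $c$-sums by means of Lemma \ref{lem_cocycle_condition} (relations \eqref{eq_z1} and \eqref{eq_z2}), the relations $uu^*=u^*u=I$ all reduce to $\psi_0(x_{jk})+\psi_0(x_{kj}^*)=\sum_p c(u_{pj}^*\otimes u_{pk})$, whereas the relations $u^t\bar u=\bar u u^t=I$ all reduce to $\psi_0(x_{jk})+\psi_0(x_{kj}^*)=\sum_p c(u_{kp}^*\otimes u_{jp})$. These two families are simultaneously solvable exactly when the two right-hand sides agree for all $j,k$, which is precisely condition \eqref{eq_coboundary_condition}; a solution then yields a functional $\psi$ on $\Pol(U_d^+)$ with $c=\partial\psi$. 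I expect the main obstacle to be the bookkeeping of this last step: checking that vanishing on the ideal reduces to vanishing on the generating relations (which rests on $c_0$ annihilating $\ker q\otimes\mathcal F$ and $\mathcal F\otimes\ker q$), and then tracking the index juggling so that the two relation families land on the left- and right-hand sides of \eqref{eq_coboundary_condition}, with \eqref{eq_z1} and \eqref{eq_z2} doing the work of aligning the $c$-terms.
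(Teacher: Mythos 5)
Your forward implication is exactly the paper's: expand $c=\partial\psi$ on both sums and use $u^*u=I$ and $\bar u u^t=I$ to collapse each to $\psi(u_{jk})+\psi(u_{kj}^*)$. The converse, however, takes a genuinely different and equally valid route. The paper defines $\psi(u_{jk})=\psi(u_{kj}^*)=\tfrac12\sum_p c(u_{pj}^*\otimes u_{pk})$ and then verifies by hand that the upper-triangular operators $T(u_{jk})$ of Lemma \ref{lemmabfg} satisfy the four defining relations of $\Pol(U_d^+)$, so that $T$ extends to a homomorphism and Lemma \ref{lemmabfg} yields $c=\partial\tilde\psi$; there the relations $t^*t=tt^*=I$ need only normalisation and the definition of $\psi$, while $\bar t t^t=t^t\bar t=I$ is where \eqref{eq_z2} and \eqref{eq_coboundary_condition} enter. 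You instead lift $c$ to the free $^*$-algebra, invoke the standard fact that every normalised Hochschild $2$-cocycle there is a coboundary (your recursion $\psi_0(ab)=\Cou(a)\psi_0(b)+\psi_0(a)\Cou(b)-c_0(a\otimes b)$, whose consistency across splittings of a word is precisely the cocycle identity), and observe that $\psi_0$ kills the whole ideal $\ker q$ as soon as it kills the generating relations, because $c_0$ vanishes whenever either tensor leg lies in $\ker q$ and $\ker q\subseteq\ker\Cou$ — a $2$-cocycle analogue of Lemma \ref{lem_5.8}. The resulting linear system on the generator values is the same as the paper's: the relations $u^*u=uu^*=I$ force $\psi_0(x_{jk})+\psi_0(x_{kj}^*)=\sum_p c(u_{pj}^*\otimes u_{pk})$ (aligning the two versions via \eqref{eq_z1}), the relations $\bar u u^t=u^t\bar u=I$ force the same left-hand side to equal $\sum_p c(u_{kp}^*\otimes u_{jp})$ (via \eqref{eq_z2}), and simultaneous solvability is exactly \eqref{eq_coboundary_condition}. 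Your route buys conceptual clarity — it makes Proposition \ref{prop_cohomology_Kd} (triviality of $H^2$ for the Brown--Glockner--von Waldenfels algebra) immediate, since that algebra only imposes the first family of equations — at the price of carrying out the well-definedness induction for $\psi_0$ carefully; the paper's route outsources that step to Lemma \ref{lemmabfg} and replaces it by a longer but entirely mechanical matrix computation. I see no gap in your argument.
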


\begin{proof} 
Note first that we can assume we are dealing with normalised 2-cocycles; the passage $c \mapsto c - \lambda \partial \Cou$ for $\lambda \in \bc$ does not affect neither being a coboundary, nor satisfying the equalities in \eqref{eq_coboundary_condition}.

($\Rightarrow$) Assume that $c\in Z^2(U_d^+)$ is a normalised coboundary, so that there exists a functional $\psi:\Pol(U_d^+)\to \bc$  such that $\psi(1)=0$ and $c = \partial \psi$. Then for every $j,k\in \{1,\ldots, d\}$
	\begin{align*}
	\sum_{p=1}^d c(u_{pj}^*\otimes u_{pk}) & =
	\sum_{p=1}^d \partial \psi (u_{pj}^*\otimes u_{pk})  
	= \sum_{p=1}^d \left( \psi (u_{pj}^*)\Cou( u_{pk})-\psi (u_{pj}^* u_{pk})+\Cou(u_{pj}^*)\psi( u_{pk})\right)
	\\ & 
	= \psi (u_{kj}^*)-\psi (\sum_{p=1}^d u_{pj}^* u_{pk})+\psi( u_{jk}) 
	\stackrel{u^*u=I}{=} \psi (u_{kj}^*)+\psi( u_{jk}),\\
	\mbox{and similarly} \\
	\sum_{p=1}^d c(u_{kp}^*\otimes u_{jp}) & =
	\sum_{p=1}^d \partial \psi (u_{kp}^*\otimes u_{jp})  
	= \sum_{p=1}^d \left( \psi (u_{kp}^*)\Cou( u_{jp})-\psi (u_{kp}^* u_{jp})+\Cou(u_{kp}^*)\psi( u_{jp})\right)
	\\ & 
	= \psi (u_{kj}^*)-\psi (\sum_{p=1}^d u_{kp}^* u_{jp})+\psi( u_{jk}) 
	\stackrel{\bar{u}u^t=I}{=} \psi (u_{kj}^*)+\psi( u_{jk}).
	\end{align*}
This shows that \eqref{eq_coboundary_condition} holds.
	
	($\Leftarrow$) Let then $c \in Z^2(U^+_d)$ be normalised and satisfying \eqref{eq_coboundary_condition}. Set $\psi(1)=0$,
	\begin{equation} \label{eq_functional_definition}
	\psi(u_{jk})=\psi(u_{kj}^*) := \frac12\sum_{p=1}^d c(u_{pj}^*\otimes u_{pk}), \;\;\; j,k \in\{1,\ldots, d\}.
	\end{equation}
	We are going to prove that the map $T=T_{c, \psi}$ defined via the 
prescription \eqref{Tdefined} for $a \in \{1,u_{jk}, u_{jk}^*:j,k=1,\ldots d\}$ 
extends to a homomorphism $\tilde{T}:\Pol(U_d^+) \to \textup{End}(\bc\oplus \ker \Cou 
\oplus \bc)$. For that it suffices to check that elements 
$t_{jk}:=T_{c,\psi}(u_{jk})$, $j,k \in \{1,\ldots,d\}$ satisfy the relations 
that define the algebra $\Pol(U_d^+)$. Then we will be able to define the 
functional $\tilde{\psi}:\Pol(U_d^+) \to \bc$ via the formula $\tilde{\psi}(a):= - 
(\tilde{T}(a))_{13}$, $a \in \Pol(U_d^+)$ and finally conclude by Lemma \ref{lemmabfg}  
that $\partial \tilde{\psi} = c$, so that in particular $c$ is a coboundary.
	
	We first check that the matrix $t:=(t_{jk})_{j,k=1}^d$ satisfies the condition\ $t^*t = I$, i.e.\ we have for each $j,k\in \{1,\ldots,d\}$ the equality $\sum_{p=1}^d t_{pj}^*t_{pk}=\delta_{jk}I$. 
	This is indeed the case, as
	\begin{align*}
	&\sum_{p=1}^d t_{pj}^*t_{pk}=  \sum_{p=1}^{d}  \ T(u^*_{p j}) T(u_{pk})
	\\
	&= \sum_{p=1}^{d}  \begin{pmatrix} 
	\Cou(u^*_{p j}) & c(u^*_{p j} \otimes -) & -\psi(u^*_{p j}) \\
	0 & u^*_{p j} \cdot - & u^*_{p j}-\Cou(u^*_{p j})1 \\
	0 & 0 & \Cou(u^*_{p j})
	\end{pmatrix}
	\begin{pmatrix} 
	\Cou(u_{pk}) & c(u_{pk} \otimes -) & -\psi(u_{pk}) \\
	0 & u_{pk} \cdot - & u_{pk}-\Cou(u_{pk})1 \\
	0 & 0 & \Cou(u_{pk})
	\end{pmatrix}
	\\
	& = \begin{pmatrix} 
	\sum_{p=1}^{d}  \Cou(u^*_{p j})\Cou(u_{pk}) & (\star) & (\star \star) \\
	0 & \sum_{p=1}^{d}  u^*_{p j}u_{pk} \cdot - & \sum_{p=1}^{d}  \left( u^*_{p 
		j}[u_{pk}-\Cou(u_{pk})1] + [u^*_{p j}-\Cou(u^*_{p j})1]\Cou(u_{pk})\right)\\
	0 & 0 & \sum_{p=1}^{d}  \Cou(u^*_{p j})\Cou(u_{pk})
	\end{pmatrix}
	\end{align*}
	and further
\begin{align*}
\sum_{p=1}^d &t_{pj}^*t_{pk}	 = \begin{pmatrix} 
	\Cou(\sum_{p=1}^{d} u^*_{p j}u_{pk}) & (\star) & (\star \star) \\
	0 & \sum_{p=1}^{d}  u^*_{p j}u_{pk} \cdot - & \sum_{p=1}^{d}  \left( [u^*_{p 
		j}u_{pk}-\Cou(u^*_{p j})1]\Cou(u_{pk})\right)\\
	0 & 0 &  \Cou(\sum_{p=1}^{d} u^*_{p j}u_{pk})
	\end{pmatrix}
	\\
	& 
	= \begin{pmatrix} 
	\delta_{jk} & (\star) & (\star \star) \\
	0 & \delta_{jk} 1 \cdot - & 0\\
	0 & 0 & \delta_{jk}
	\end{pmatrix}.
	\end{align*}
It remains then to check that $ (\star) =0$ and that $ (\star\star) =0$. The first 
	fact holds due to the fact that $c$ is a normalised cocycle and that the arguments are taken from $\ker \Cou$, since
	$$ (\star) =\sum_{p=1}^{d} \left( \Cou(u^*_{p j})c(u_{pk} \otimes -)+ c(u^*_{p j} 
	\otimes  u_{pk} \cdot -)\right)
	= \sum_{p=1}^{d} \left( c(u^*_{p j}u_{pk}\otimes -)+ c(u^*_{p j} 
	\otimes  u_{pk} ) \Cou(-)\right) = 0.
	$$
	The second formula holds true because of \eqref{eq_functional_definition}:
	\begin{align*}
	(\star\star) & =  \sum_{p=1}^{d} \left(-\Cou(u^*_{p j})\psi(u_{pk})+ 
c(u^*_{p j} 
	\otimes [u_{pk}-\Cou(u_{pk})1]) - \psi(u^*_{p j})\Cou(u_{pk})\right) 
	\\ & = - \psi(u_{jk})+ \sum_{p=1}^{d} c(u^*_{p j} \otimes u_{pk})  - 
	\psi(u^*_{k j})
	= -2\cdot \frac12 \sum_{r=1}^d c( u_{rj}^* \otimes u_{rk}) + \sum_{p=1}^{d} c(u^*_{p 
		j} \otimes u_{pk})=0.
	\end{align*}
The fact that $tt^* = I$ follows in the same way -- note that we have not yet used the equality \eqref{eq_coboundary_condition}.

Next, we verify that $t^t\bar{t}=I$, i.e.\ for each $j,k \in\{1,\ldots,d\}$ we have $\sum_{p=1}^d t_{pj}t_{pk}^*=\delta_{jk}I$. Indeed, we have
	\begin{align*}
	\sum_{p=1}^d &t_{pj}t_{pk}^*=  \sum_{p=1}^{d}  \ T(u_{p j}) T(u_{pk}^*)
	\\
	&= \sum_{p=1}^{d}  \begin{pmatrix} 
	\Cou(u_{p j}) & c(u_{p j} \otimes -) & -\psi(u_{p j}) \\
	0 & u_{p j} \cdot - & u_{p j}-\Cou(u_{p j}) 1\\
	0 & 0 & \Cou(u_{p j})
	\end{pmatrix}
	\begin{pmatrix} 
	\Cou(u_{pk}^*) & c(u_{pk}^* \otimes -) & -\psi(u_{pk}^*) \\
	0 & u_{pk}^* \cdot - & u_{pk}^*-\Cou(u_{pk}^*)1 \\
	0 & 0 & \Cou(u_{pk}^*)
	\end{pmatrix}
	\\
	& = \begin{pmatrix} 
	\Cou(\sum_{p=1}^{d} u_{p j}u_{pk}^*) & (\diamond) & (\diamond\diamond) \\
	0 & \sum_{p=1}^{d}  u_{p j}u_{pk}^* \cdot - & \sum_{p=1}^{d}  \left( u_{p 
		j}u_{pk}^*-\Cou(u_{p j})\Cou(u_{pk}^*)1\right)\\
	0 & 0 &  \Cou(\sum_{p=1}^{d} u_{p j}u_{pk}^*)
	\end{pmatrix}
	\\
	& 
	= \begin{pmatrix} 
	\delta_{jk} & (\diamond) & (\diamond\diamond) \\
	0 & \delta_{jk} 1 \cdot - & 0\\
	0 & 0 & \delta_{jk}
	\end{pmatrix}
	\end{align*}
	and, as above, 
	$$ (\diamond) =\sum_{p=1}^{d} \left( \Cou(u_{p j})c(u_{pk}^* \otimes -)+ c(u_{p j} 
	\otimes  u_{pk}^* \cdot -)\right)
	= \sum_{p=1}^{d} \left( c(u_{p j}u_{pk}^*\otimes -)+ c(u_{p j} 
	\otimes  u_{pk}^* ) \Cou(-)\right) = 0.
	$$
	and 
	\begin{align*}
	(\diamond\diamond) & =  \sum_{p=1}^{d} \left(-\Cou(u_{p j})\psi(u_{pk}^*)+ c(u_{p j} 
	\otimes [u_{pk}^*-\Cou(u_{pk}^*)1]) - \psi(u_{p j})\Cou(u_{pk}^*)\right) 
	\\ & = -\psi(u_{jk}^*)+ \sum_{p=1}^{d} c(u_{p j} \otimes u_{pk}^*) - 
	\psi(u_{k j})
	\stackrel{\eqref{eq_functional_definition}}{=} -2\cdot \frac12 \sum_{p=1}^d c( u_{pk}^* \otimes u_{pj}) + \sum_{p=1}^{d} c(u_{p 
		j} \otimes u_{pk}^*). 
	\end{align*}
	But, due to \eqref{eq_coboundary_condition}
	and \eqref{eq_z2},
	we see that 
	\begin{align*}
	(\diamond\diamond) & =  \sum_{p=1}^{d} c(u_{pk} \otimes u_{pj}^*) - \sum_{p=1}^{d} c( u_{pj}^* \otimes u_{pk})
	\stackrel{\eqref{eq_z2}}{=}  \sum_{p=1}^d c(u^*_{kp} \otimes u_{jp}) - \sum_{p=1}^{d} c( u_{pj}^* \otimes u_{pk}) \stackrel{\eqref{eq_coboundary_condition}}{=} 0.
	\end{align*}
Finally the equality $\bar{t} t^t = I$ can be verified in the same manner. 
	
\end{proof}

Before we pass to the main result of this subsection we note another 
interesting fact which can be deduced from the proof of the last two lemmas. 
Indeed, it follows from the proofs of Lemma \ref{lem_cocycle_condition} and 
Lemma \ref{lem_coboundary_condition} that any 2-cocycle on the 
Brown-Glockner-von Waldenfels algebra $\BGW$ (for any $d\in \bn$) is a 
coboundary. Indeed, any 2-cocycle on $\BGW$ satisfies the condition 
\eqref{eq_z1}, since the proof of the first part of Lemma 
\ref{lem_cocycle_condition} used only the unitarity of the matrix $u$, which 
holds for generators in $\BGW$. Furthermore, given a 2-cocycle $c$ we can follow 
the proof of Lemma \ref{lem_coboundary_condition}: define $\psi$ as in 
\eqref{eq_functional_definition} and then  show that $t_{jk}$'s satisfy the 
relations defining $\BGW$, so that one can deduce that $c$ is a coboundary (as 
noted above, this does not involve condition \eqref{eq_coboundary_condition}). 
Thus we obtain the following proposition.

\begin{prop} \label{prop_cohomology_Kd}
Let $d \in \bn$. The second cohomology group for the Brown-Glockner-von Waldenfels 
algebra $\BGW$ with trivial coefficients is trivial: 
$H^2(\BGW,{_\Cou}\bc_\Cou)=0$.
\end{prop}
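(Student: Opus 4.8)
The plan is to deduce $H^2(\BGW,{}_\Cou\bc_\Cou)=0$ directly from the proofs of Lemma \ref{lem_cocycle_condition} and Lemma \ref{lem_coboundary_condition}, by tracking exactly which defining relations each step requires. First I would reduce, as usual, to a normalised 2-cocycle $c$ by replacing it with $c-c(1\ot 1)\partial\Cou$, which changes neither the cohomology class nor the property of being a coboundary. The essential structural point is that $\BGW$ is the universal unital $^*$-algebra on a single unitary matrix $x=(x_{jk})$: it carries the relations $x^*x=xx^*=I$ but \emph{not} the extra relations $x^t\bar x=\bar x x^t=I$ present in $\Pol(U_d^+)$. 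Consequently, to realise $c$ as a coboundary via Lemma \ref{lemmabfg} it suffices to build a unital homomorphism out of $\BGW$, i.e.\ to exhibit candidate generator images forming a unitary matrix; only the conditions $t^*t=tt^*=I$ (with the $t^*$-entries read as $T$ applied to the adjoint generators) have to be checked.

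Concretely, given the normalised cocycle $c$, I would set $\psi(1)=0$ and $\psi(x_{jk})=\psi(x_{kj}^*):=\tfrac12\sum_{p=1}^d c(x_{pj}^*\ot x_{pk})$ as in \eqref{eq_functional_definition}, form the map $T=T_{c,\psi}$ of \eqref{Tdefined} on the generators and their adjoints, and write $t_{jk}=T(x_{jk})$. The verifications that $t^*t=I$ and $tt^*=I$ are precisely the first two blocks of the proof of Lemma \ref{lem_coboundary_condition}, transcribed with $u$ replaced by $x$: the upper-middle entries vanish because $c$ is a normalised cocycle and $x^*x=xx^*=I$, while the upper-right entries vanish by the definition of $\psi$ --- with the sole additional ingredient that $tt^*=I$ requires identity \eqref{eq_z1}. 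Crucially, none of these computations uses the coboundary criterion \eqref{eq_coboundary_condition}.

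It therefore remains to note that \eqref{eq_z1} holds for \emph{every} 2-cocycle on $\BGW$, and this is exactly the first half of Lemma \ref{lem_cocycle_condition}: applying $\partial c$ to $\sum_{p,r=1}^d x_{jp}\ot x_{rp}^*\ot x_{rk}$ and collapsing the resulting sums uses only $x^*x=xx^*=I$, a relation available in $\BGW$. (The companion identity \eqref{eq_z2} and the criterion \eqref{eq_coboundary_condition}, which in Lemma \ref{lem_coboundary_condition} served only to force the two transpose relations $t^t\bar t=\bar t t^t=I$, are never needed.) With $t$ unitary, $T$ extends to a unital homomorphism $\widetilde T\colon\BGW\to\textup{End}(\bc\oplus\ker\Cou\oplus\bc)$; setting $\widetilde\psi(a)=-(\widetilde T(a))_{13}$, Lemma \ref{lemmabfg} gives $c=\partial\widetilde\psi$. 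Hence $c$ is a coboundary and $H^2(\BGW,{}_\Cou\bc_\Cou)=0$.

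I expect no genuine analytic difficulty here, since $\BGW$ is \emph{less} constrained than $U_d^+$ and all the needed machinery is already developed. The only delicate point is the bookkeeping: confirming that the two relations $\BGW$ does impose ($x^*x=xx^*=I$) coincide exactly with the two whose verification in Lemma \ref{lem_coboundary_condition} avoids \eqref{eq_coboundary_condition}, and that the single genuine input \eqref{eq_z1} for $tt^*=I$ is itself a consequence of unitarity alone. This is precisely the mechanism by which $H^2$ collapses for $\BGW$ while remaining nontrivial for $U_d^+$.
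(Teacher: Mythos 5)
Your proposal is correct and follows essentially the same route as the paper: the paper also deduces the result by observing that \eqref{eq_z1} is a consequence of unitarity alone, then running the construction of $\psi$ and $T$ from the proof of Lemma \ref{lem_coboundary_condition} and checking only the relations $t^*t=tt^*=I$, which never invoke \eqref{eq_coboundary_condition}. Your bookkeeping of exactly which relations and which identities are needed at each step matches the paper's argument.
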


We are almost ready for the proof of the main result of this subsection; we still need to introduce the 2-cocycles which will lead to a basis of $H^2(U_d^+)$. Assume that $d \geq 3$.

Denote by $e_{jk}$ ($j,k=1,\ldots,d$) the canonical matrix units in $M_d(\bc)$. 
 By Proposition \ref{prop_v_unplus}, each $e_{jk}$ defines a  Gaussian 1-cocycle 
$\eta_{jk}$ on $\Pol(U_d^+)$. Thus further, by Lemma \ref{lem_2-cocycle}, if $m,n,l 
\in \{1,\ldots,d\}$ (and we view $l$ as an arbitrary index 
depending on $m$ and $n$), then  
\begin{equation}
  \label{Kmndef} K_{m,n} = K(\eta_{e_{lm}}, \eta_{e_{ln}})
\end{equation}
is a 2-cocycle.
 Moreover, for each $p\in \{1, \ldots,d-1\}$ define the matrix $V_p\in M_d(\bc)$ by 
inserting the matrix $V=\begin{pmatrix} 1 & i 
 \\ 0 & 1 \end{pmatrix}$ into the $(p,p+1)$-block. More precisely, we have 
$(V_p)_{p,p}=1$, $(V_p)_{p, p+1}=i$, $(V_p)_{p+1, p+1}=1$ and all the other 
entries are 0. Any such $V_p$ defines a Gaussian 1-cocycle $\eta_{V_p}$, and 
hence a 2-cocycle 
\begin{equation}
\label{Kpdef} K_p:=K(\eta_{V_p}, \eta_{V_p})
\end{equation}
on $\Pol(U_d^+)$.

\begin{tw}\label{thm_main_cohomology_Ud}
	Let $d \in \bn$. Then  $$H^2(U_d^+,{}_\Cou\bc_\Cou) \cong \bc^{d^2-1}.$$ Furthermore for $d \geq 2$ the set 
	$\wt{Y}_d=\{[K_{m,n}]:m\neq n, m,n=1,\ldots,d\} \cup \{[K_p]:p=1, \ldots, d-1\}$
	is a basis of $H^2(U_d^+,{}_\Cou\bc_\Cou)$. 
\end{tw}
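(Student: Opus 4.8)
The plan is to realise $H^2(U_d^+,{}_\Cou\bc_\Cou)$ via an explicit \emph{defect map} that measures exactly the failure of the coboundary criterion \eqref{eq_coboundary_condition}. Concretely, for $c\in Z^2(U_d^+)$ I would set
\[
\mathcal{D}(c)_{jk}:=\sum_{p=1}^d c(u_{pj}^*\otimes u_{pk})-\sum_{p=1}^d c(u_{kp}^*\otimes u_{jp}),\qquad j,k=1,\ldots,d,
\]
which gives a linear map $\mathcal{D}\colon Z^2(U_d^+)\to M_d(\bc)$ (these values are well defined for any cocycle, since they only involve generator pairs). The first routine point is that $\mathcal{D}$ lands in $sl(d,\bc)$: summing $\mathcal{D}(c)_{jj}$ over $j$ and relabelling the summation index $p\leftrightarrow j$ in the second double sum shows that the two contributions coincide, so $\Tr\mathcal{D}(c)=0$.

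Next I would pin down the kernel. By the two implications of Lemma \ref{lem_coboundary_condition}, a $2$-cocycle $c$ is a coboundary if and only if \eqref{eq_coboundary_condition} holds, which is precisely the condition $\mathcal{D}(c)=0$. Hence $\mathcal{D}$ vanishes on $B^2(U_d^+)$ and has kernel exactly $B^2(U_d^+)$, so it descends to a well-defined \emph{injective} linear map $\overline{\mathcal{D}}\colon H^2(U_d^+,{}_\Cou\bc_\Cou)\to sl(d,\bc)$. It then remains only to establish surjectivity and to identify the explicit classes.

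For this I would compute the defects of the cocycles introduced in \eqref{Kmndef} and \eqref{Kpdef}. The inputs are Gaussian, so $\eta(u_{ab}^*)=-\eta(u_{ba})$ by Lemma \ref{lemma_eta_S}, and $K(\eta_1,\eta_2)(a\otimes b)=\la\eta_1(a^*),\eta_2(b)\ra$ by Lemma \ref{lem_2-cocycle}. A direct evaluation with these conventions yields, for $m\neq n$,
\[
\mathcal{D}(K_{m,n})=e_{mn}
\]
(the terms carrying the auxiliary index $l$ cancel, leaving a result independent of $l$), while, using the $2\times2$ block $\begin{pmatrix}1&i\\0&1\end{pmatrix}$,
\[
\mathcal{D}(K_p)=V_p^*V_p-V_pV_p^*=e_{p+1,p+1}-e_{pp},\qquad p=1,\ldots,d-1.
\]
The matrices $\{e_{mn}:m\neq n\}$ span the off-diagonal part and $\{e_{p+1,p+1}-e_{pp}\}$ span the traceless diagonal part, so together they form a basis of $sl(d,\bc)$ of cardinality $d(d-1)+(d-1)=d^2-1$. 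Thus $\overline{\mathcal{D}}$ is onto, hence an isomorphism, giving $H^2(U_d^+,{}_\Cou\bc_\Cou)\cong sl(d,\bc)\cong\bc^{d^2-1}$; and since $\overline{\mathcal{D}}$ carries $\wt{Y}_d$ bijectively onto a basis of $sl(d,\bc)$, the set $\wt{Y}_d$ is a basis of $H^2(U_d^+,{}_\Cou\bc_\Cou)$.

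The main obstacle I anticipate is the bookkeeping in the two defect computations: one must apply the correct adjoint in the first slot of $K(\cdot,\cdot)$ and use $W=-V^t$ consistently, and in the $K_{m,n}$ case verify that the $l$-dependent terms cancel precisely because $m\neq n$ (so that the defect, and then via injectivity of $\overline{\mathcal{D}}$ the cohomology class itself, is independent of the choice of $l$). Everything else---linearity, the trace identity, the kernel description from Lemma \ref{lem_coboundary_condition}, and assembling the basis of $sl(d,\bc)$---is formal once these calculations are in place.
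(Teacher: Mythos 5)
Your proposal is correct and follows essentially the same route as the paper: the map $\mathcal{D}$ is exactly the paper's defect map $\Delta$, the kernel identification via Lemma \ref{lem_coboundary_condition}, the trace argument, and the evaluation of $\Delta$ on the classes $K_{m,n}$ and $K_p$ all coincide with the paper's proof. (Your value $\mathcal{D}(K_p)=V_p^*V_p-V_pV_p^*=e_{p+1,p+1}-e_{pp}$ is in fact the correct sign -- the paper records $e_{pp}-e_{p+1,p+1}$ at this step -- but this is immaterial since either choice completes the basis of $sl(d,\bc)$.)
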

\begin{proof}
Define $\Delta:Z^2(U_d^+) \to M_d(C)$ by the formula
$$\Delta (c)= \left(\sum_{p=1}^d \big( c(u_{pj}^*\otimes u_{pk}) - c(u_{kp}^*\otimes  u_{jp}) \big)\right)_{j,k=1}^d.$$
	
 Observe that $\Delta\big(Z^2(U_d^+)\big) \subseteq sl(d,\bc)$, where $sl(d,\bc)$ denotes the 
space of $d\times d$ complex matrices with trace zero; indeed, for any $c \in Z^2(U_d^+)$
	$$\textup{Tr}(\Delta (c) ) = \sum_{j,p=1}^d c(u_{pj}^*\otimes u_{pj}) - 	\sum_{j,p=1}^d c(u_{jp}^*\otimes u_{jp})= 
	0.$$
	
 Next we check that  $\Delta\big(Z^2(U_d^+)\big) = sl(d,C)$, i.e.\ that $\Delta$ is surjective. 
Assume then that $d\geq 2$, recall \eqref{Kmndef}-\eqref{Kpdef} and compute 
($p\in\{1, \ldots,d-1\}$, $j,k,m,n\in \{1, \ldots,d\}$ and $m\neq n$)
	\begin{align*}
	(\Delta(K_p))_{jk} & 
	= \sum_{s=1}^d \big( \langle \eta_p(u_{sj}),\eta_p( u_{sk})\rangle - \langle \eta_p(u_{ks}),\eta_p( u_{js})\rangle \big)
	\\ &=(V_p^*V_p-(\bar{V}_pV_p^t)^t)_{jk} =[V_p^*,V_p]_{jk}= \delta_{jp}\delta_{kp} - \delta_{j,p+1}\delta_{k,p+1},
	\\
	(\Delta(K_{m,n}))_{jk} &= \sum_{s=1}^d \big( \langle \eta_{lm}(u_{sj}),\eta_{ln}( u_{sk})\rangle - \langle \eta_{lm}(u_{ks}),\eta_{ln}( u_{js})\rangle \big)
	\\ &= [e_{lm}^*e_{ln} - (\bar{e}_{lm}e_{ln}^t)^t]_{jk}=[e_{ml}e_{ln} - e_{ln}e_{ml}]_{jk}=(e_{mn})_{jk}.
	\end{align*}
This shows that
	$$\Delta(K_p) = e_{pp} - e_{p+1,p+1} , \quad \Delta(K_{m,n}) = e_{mn},$$
	and surjectivity of $\Delta$ follows from the fact that the matrices appearing on the right-hand-side form together a basis for
	$sl(d,C)$. 
	
	Finally, we note that Lemma \ref{lem_coboundary_condition} shows that 
$\ker \Delta = B^2(U_d^+)$. Hence $\Delta$ induces a linear isomorphism from 
$H^2(U_d^+)=Z^2(U_d^+)/B^2(U_d^+)$ to $sl(d,\bc)$ and the conclusions of the 
theorem hold for $d\geq 2$.

The case $d=1$ is trivial.
\end{proof}

\subsection{Computing $H^2(O_d^+)$}

The  method presented in the last subsection provides an alternative, elementary proof of the fact that 
$$ H^2(O_d^+) \cong \bc^{\frac{d(d-1)}{2}}, $$
as proved by Collins, H\"artel and Thom in \cite[Theorem 3.2]{CHT}  (see also \cite[Proposition 6.4]{BichonCompositio}).  It also provides a basis of the space $ H^2(O_d^+)$. 

Comparing to the unitary case, for $O_d^+$ the 'defect' map $\Delta$, which 
measures how far a cocycle is  from being a coboundary, maps the space of 
2-cocycles to $o(d)=o(d,\bc)$, the space of anti-symmetric complex $d\times d$ 
matrices. This difference comes from the fact that for $O_d^+$ we have the 
additional relation $u_{jk}^* = u_{jk}$. We sketch below the corresponding 
arguments.

\begin{lem}
	For every normalised $c\in Z^2(O_d^+)$ we have for all $j,k\in\{1,\ldots,d\}$
	\begin{equation} 
	\label{eq_cocycle_condition_O}
	\sum_{p=1}^d c(u_{pj} \otimes u_{pk}) = \sum_{p=1}^d c(u_{jp} \otimes u_{kp}).
	\end{equation}
\end{lem}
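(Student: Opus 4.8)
The plan is to follow the scheme of Lemma \ref{lem_cocycle_condition} essentially verbatim, exploiting that in $\Pol(O_d^+)$ the generators are self-adjoint, so that $u_{jk}^*=u_{jk}$ for all $j,k$ and the unitarity relations read $\sum_{p}u_{jp}u_{kp}=\delta_{jk}1$ and $\sum_{p}u_{pj}u_{pk}=\delta_{jk}1$. Because of this self-adjointness the two distinct identities \eqref{eq_z1} and \eqref{eq_z2} of the unitary case merge into the single identity \eqref{eq_cocycle_condition_O}, and it suffices to test the cocycle condition $\partial c=0$ on one suitably chosen tensor.

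Concretely, I would fix $j,k\in\{1,\ldots,d\}$ and apply $\partial c$ to the element $\sum_{p,r=1}^{d}u_{jp}\otimes u_{rp}\otimes u_{rk}$, the orthogonal analogue of the tensor used for $U_d^+$, obtained from it by replacing $u_{rp}^*$ with $u_{rp}$. Expanding the four terms of the coboundary (with the trivial action, so that the leftmost factor enters only through $\Cou$), the first term collapses via $\Cou(u_{jp})=\delta_{jp}$ to $\sum_{r}c(u_{rj}\otimes u_{rk})$, while the fourth term collapses via $\Cou(u_{rk})=\delta_{rk}$ to $-\sum_{p}c(u_{jp}\otimes u_{kp})$.

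The two middle terms are where the orthogonality relations and normalisation enter. In $-\sum_{p,r}c(u_{jp}u_{rp}\otimes u_{rk})$ one contracts first over $p$, using $\sum_{p}u_{jp}u_{rp}=\delta_{jr}1$, to obtain $-c(1\otimes u_{jk})$, which vanishes since $c$ is normalised; symmetrically, in $\sum_{p,r}c(u_{jp}\otimes u_{rp}u_{rk})$ one contracts first over $r$, using $\sum_{r}u_{rp}u_{rk}=\delta_{pk}1$, to obtain $c(u_{jk}\otimes 1)=0$. As $c$ is a cocycle the whole expression vanishes, and what survives is precisely $\sum_{p}c(u_{pj}\otimes u_{pk})=\sum_{p}c(u_{jp}\otimes u_{kp})$ after renaming the summation index, which is exactly \eqref{eq_cocycle_condition_O}.

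There is no genuine obstacle here: the argument is a routine transcription of the proof of Lemma \ref{lem_cocycle_condition}. The only point requiring a little care is the bookkeeping in the two middle terms — making sure one contracts over the index triggering the correct orthogonality relation ($uu^t=I$ for the first, $u^tu=I$ for the second) and that the normalisation $c(1\otimes a)=c(a\otimes 1)=0$ is invoked correctly, so that both middle terms genuinely drop out.
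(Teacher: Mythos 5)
Your proof is correct and is exactly the paper's argument: the paper's proof consists of the single instruction to apply $\partial c$ to $\sum_{s,p=1}^d u_{js}\otimes u_{ps}\otimes u_{pk}$, which is the same tensor you use (up to renaming the summation indices), and your expansion of the four coboundary terms, the use of the orthogonality relations $\sum_p u_{jp}u_{rp}=\delta_{jr}1$ and $\sum_r u_{rp}u_{rk}=\delta_{pk}1$, and the normalisation $c(1\otimes a)=c(a\otimes 1)=0$ all check out.
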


\begin{proof}
	Apply $\partial c$ to $\sum_{s,p=1}^d u_{js} \otimes u_{ps} \otimes u_{pk}$. 
\end{proof}

\begin{lem} \label{lem_coboundary_condition_O}
	A 2-cocycle $c\in Z^2(O_d^+)$ is a coboundary if and only if for all $j,k\in\{1,\ldots,d\}$
	\begin{equation} 
	\label{eq_coboundary_condition_O}
	\sum_{p=1}^d c(u_{jp} \otimes u_{kp}) = \sum_{p=1}^d c(u_{kp} \otimes u_{jp}).
	\end{equation}
\end{lem}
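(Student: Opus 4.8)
The plan is to mirror the proof of Lemma~\ref{lem_coboundary_condition}, taking advantage of the fact that the generators of $\Pol(O_d^+)$ are self-adjoint, so that the two defining algebra relations read simply $\sum_{p=1}^d u_{jp}u_{kp}=\delta_{jk}1$ and $\sum_{p=1}^d u_{pj}u_{pk}=\delta_{jk}1$. As in the unitary case, I would first reduce to normalised $2$-cocycles, since the passage $c\mapsto c-\lambda\,\partial\Cou$ affects neither the coboundary class nor the validity of \eqref{eq_coboundary_condition_O}.

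The direction ($\Rightarrow$) should be immediate: if $c=\partial\psi$ with $\psi(1)=0$, then expanding $\sum_{p=1}^d c(u_{jp}\otimes u_{kp})=\sum_{p=1}^d\partial\psi(u_{jp}\otimes u_{kp})$ and using $\sum_p u_{jp}u_{kp}=\delta_{jk}1$ kills the middle term and leaves $\psi(u_{jk})+\psi(u_{kj})$, which is symmetric in $j,k$. Hence \eqref{eq_coboundary_condition_O} holds automatically for every coboundary, and all the content of the lemma is in the converse.

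For ($\Leftarrow$) I would set $\psi(1)=0$ and
\[
\psi(u_{jk}):=\tfrac12\sum_{p=1}^d c(u_{jp}\otimes u_{kp}),\qquad j,k\in\{1,\ldots,d\},
\]
form the map $T=T_{c,\psi}$ of Lemma~\ref{lemmabfg} on the generators via \eqref{Tdefined}, and check that $t_{jk}:=T(u_{jk})$ satisfy the two defining relations $\sum_p t_{jp}t_{kp}=\delta_{jk}I$ and $\sum_p t_{pj}t_{pk}=\delta_{jk}I$. The diagonal and $(2,2)$-entries of these products reproduce the relations directly, and the $(1,2)$-entries vanish because $c$ is a normalised cocycle evaluated on arguments lying in $\ker\Cou$. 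Once the relations are verified, $T$ extends by universality to a homomorphism $\widetilde T:\Pol(O_d^+)\to\textup{End}(\bc\oplus\ker\Cou\oplus\bc)$; putting $\widetilde\psi(a):=-(\widetilde T(a))_{13}$ and invoking Lemma~\ref{lemmabfg} yields $\partial\widetilde\psi=c$, so $c$ is a coboundary.

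The crux, and the step I expect to be the main obstacle, is the vanishing of the $(1,3)$-corners of the two products. A direct expansion gives these corners as $-\psi(u_{jk})-\psi(u_{kj})+\sum_p c(u_{jp}\otimes u_{kp})$ and $-\psi(u_{jk})-\psi(u_{kj})+\sum_p c(u_{pj}\otimes u_{pk})$ respectively. With the symmetric choice of $\psi$ above one has $\psi(u_{jk})+\psi(u_{kj})=\tfrac12\big(\sum_p c(u_{jp}\otimes u_{kp})+\sum_p c(u_{kp}\otimes u_{jp})\big)$, so the first corner vanishes precisely when $\sum_p c(u_{jp}\otimes u_{kp})=\sum_p c(u_{kp}\otimes u_{jp})$, i.e.\ by the hypothesis \eqref{eq_coboundary_condition_O}; the second corner then also vanishes once $\sum_p c(u_{pj}\otimes u_{pk})$ is rewritten through the automatic cocycle identity \eqref{eq_cocycle_condition_O}. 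Thus the single hypothesis \eqref{eq_coboundary_condition_O}, combined with \eqref{eq_cocycle_condition_O}, is exactly what forces both relations to hold simultaneously, and this interplay is the only genuinely new bookkeeping compared with Lemma~\ref{lem_coboundary_condition}; everything else parallels the unitary argument and is simpler here, since there are no separate starred relations to treat.
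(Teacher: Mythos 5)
Your proposal is correct and follows essentially the same route as the paper: reduce to normalised cocycles, note that $(\Rightarrow)$ is immediate from expanding $\partial\psi$ against the orthogonality relations, and for $(\Leftarrow)$ define $\psi(u_{jk})$ as half the (symmetric, by \eqref{eq_coboundary_condition_O} and \eqref{eq_cocycle_condition_O}) row/column sums and verify via Lemma \ref{lemmabfg} that the matrix $t=(T_{c,\psi}(u_{jk}))$ satisfies the defining relations of $\Pol(O_d^+)$. Your identification of the $(1,3)$-corners as the only place where the hypothesis enters, with \eqref{eq_cocycle_condition_O} reconciling the two relations, matches the paper's (sketched) argument exactly.
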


\begin{proof}Assume (as we may) that $c$ is normalised.

	($\Rightarrow$) Apply $-\partial \psi = c$ to $\sum_{p=1}^d u_{jp} \otimes u_{kp}$ and to $\sum_{p=1}^d u_{pj}
	\otimes u_{pk}$ and subtract the resulting equations.

	($\Leftarrow$) Set 
	$\psi(u_{jk}) =  \frac{1}{2} \sum_{p=1}^d c(u_{pj} \otimes u_{pk}) $
	(note that the two sums above coincide and define a symmetric matrix) 
	and use Lemma \ref{lemmabfg} to prove that $\partial \psi = c$. 
\end{proof}

Let for the moment $d \geq 3$. For any $j,k \in \{1, \ldots, d\}$ consider the matrices
$$Z_{jk}=e_{jk}-e_{kj}$$
and the associated Gaussian 1-cocycles (see Proposition \ref{prop_v_onplus}) $\eta_{Z_{jk}}$ on $O_d^+$. Then for any $m,n \in \{1, \ldots, d\}$, $m <n$, choose $l\in \{1,\dots,d\}$ different from both $m$ and $n$, and define
$$\widehat{K}_{mn}=K(\eta_{Z_{lm}},\eta_{Z_{ln}}).$$
Note that according to Lemma \ref{lem_2-cocycle}, each $\widehat{K}_{mn}$ is a 
2-cocycle. 

\begin{tw}
Let $d \in \bn$. Then  $$H^2(O_d^+,{}_\Cou\bc_\Cou) \cong \bc^{\frac{d(d-1)}{2}}.$$ Furthermore for $d \geq 3$ the set 
	$\wt{Y}_d=\{[\widehat{K}_{m,n}]:m< n, m,n=1,\ldots,d\}$ is a basis of 
$H^2(O_d^+,{}_\Cou\bc_\Cou)$. 
\end{tw}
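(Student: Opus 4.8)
The plan is to follow the proof of Theorem~\ref{thm_main_cohomology_Ud} almost verbatim, with the space $sl(d,\bc)$ replaced by $o(d,\bc)$, the space of complex antisymmetric $d\times d$ matrices. Concretely, I would introduce the defect map $\Delta\colon Z^2(O_d^+)\to M_d(\bc)$ by
\[
\Delta(c)_{jk}=\sum_{p=1}^d\big(c(u_{jp}\otimes u_{kp})-c(u_{kp}\otimes u_{jp})\big),\qquad j,k=1,\ldots,d.
\]
Interchanging $j$ and $k$ visibly negates the right-hand side, so $\Delta(c)$ is automatically antisymmetric and $\Delta$ takes values in $o(d,\bc)$; this is precisely where the orthogonal case is simpler than the unitary one, since there is no separate trace-zero condition to verify. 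The decisive structural input is Lemma~\ref{lem_coboundary_condition_O}: a normalised $2$-cocycle is a coboundary exactly when $\Delta(c)=0$, so that $\ker\Delta=B^2(O_d^+)$. (One checks, as in the unitary case, that $\Delta$ vanishes on $\partial\Cou$, so that this identification is insensitive to the choice of normalisation.)

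Next I would show that $\Delta$ maps $Z^2(O_d^+)$ onto $o(d,\bc)$ for $d\geq 3$ by evaluating it on the cocycles $\widehat{K}_{mn}=K(\eta_{Z_{lm}},\eta_{Z_{ln}})$, where the index $l$ is chosen distinct from both $m$ and $n$ (possible exactly because $d\geq 3$). Since $\eta_{Z_{lm}}(u_{ab})=(Z_{lm})_{ab}$ and these entries are real, a direct computation using Lemma~\ref{lem_2-cocycle} and the antisymmetry $Z_{jk}^t=-Z_{jk}$ gives
\[
\Delta(\widehat{K}_{mn})=Z_{lm}Z_{ln}^t-Z_{ln}Z_{lm}^t=Z_{ln}Z_{lm}-Z_{lm}Z_{ln}=[Z_{ln},Z_{lm}].
\]
Evaluating this commutator for three distinct indices $l,m,n$, with $Z_{lm}=e_{lm}-e_{ml}$ and $Z_{ln}=e_{ln}-e_{nl}$, yields $\Delta(\widehat{K}_{mn})=e_{mn}-e_{nm}=Z_{mn}$, independently of the auxiliary index $l$. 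As $\{Z_{mn}:m<n\}$ is a basis of $o(d,\bc)$, surjectivity follows.

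Combining the two steps, $\Delta$ descends to a linear isomorphism from $H^2(O_d^+)=Z^2(O_d^+)/B^2(O_d^+)$ onto $o(d,\bc)$, so that $\dim H^2(O_d^+)=\dim o(d,\bc)=\tfrac{d(d-1)}{2}$; moreover the classes $[\widehat{K}_{mn}]$ are sent to the basis $\{Z_{mn}:m<n\}$, whence they themselves form a basis of $H^2(O_d^+)$ for $d\geq 3$. The low-dimensional cases must then be handled separately: for $d=1$ one has $o(1,\bc)=0$ and the statement is trivial, while for $d=2$ the three-index construction of $\widehat{K}_{mn}$ is unavailable (taking $l\in\{m,n\}$ kills the cocycle, since $Z_{mm}=0$), so the one-dimensionality of $H^2(O_2^+)$ has to be secured by exhibiting a single $2$-cocycle with non-zero defect, for instance through the identification $O_2^+\cong SU_{-1}(2)$. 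I expect the only genuine work to be the commutator computation delivering $\Delta(\widehat{K}_{mn})=Z_{mn}$, together with the careful bookkeeping of transposes and complex conjugates; everything else transcribes the unitary argument, and the one truly new subtlety is the exclusion $l\notin\{m,n\}$, which both forces $d\geq 3$ for the basis statement and turns $d=2$ into a genuine exception.
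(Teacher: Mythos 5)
Your proposal follows the paper's proof essentially verbatim: the same defect map $\Delta_O$, the same identification $\ker\Delta_O=B^2(O_d^+)$ via Lemma \ref{lem_coboundary_condition_O}, and the same computation $\Delta_O(\widehat{K}_{mn})=[Z_{ln},Z_{lm}]=Z_{mn}$ giving surjectivity onto $o(d,\bc)$ for $d\geq 3$. The only point you leave less explicit is $d=2$: the paper exhibits the required cocycle with non-zero defect directly, as $K(\eta_{Z_1},\eta_{Z_2})$ for the \emph{anti-Gaussian} cocycles attached to the symmetric matrices $Z_1=\mathrm{diag}(1,-1)$ and $Z_2=e_{12}+e_{21}$ (some non-Gaussian input is indeed unavoidable there, exactly as you observe, since Gaussian cocycles yield zero defect in dimension $2$), rather than going through $O_2^+\cong SU_{-1}(2)$.
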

\begin{proof}
 We just sketch the proof, analogous to that of Theorem 
\ref{thm_main_cohomology_Ud}, with the map $\Delta_O:Z^2(O_d^+)\to M_d$ given 
by 
$$\Delta_O(c) = \left(\sum_{p=1}^d \big( c(u_{jp} \otimes u_{kp}) - c(u_{kp} \otimes u_{jp}) \big)\right)_{j,k=1}^d,$$ 
which clearly yields an anti-symmetric matrix. If $d\geq 3$ then we check that 
\begin{align*}
\Delta_O (\widehat{K}_{mn}) & =\left(\sum_{p=1}^d \big( \langle 
\eta_{Z_{lm}}(u_{jp}),\eta_{Z_{ln}}(u_{kp}) \rangle 
- \langle \eta_{Z_{lm}}(u_{kp}),\eta_{Z_{ln}}(u_{jp}) \rangle\right)_{j,k=1}^d
\\ & = \bar{Z}_{lm} Z_{ln}^t - (\bar{Z}_{lm} Z_{ln}^t)^t
= (e_{lm}-e_{ml}) (e_{nl}-e_{ln}) - (e_{ln}-e_{nl})(e_{ml}-e_{lm})
\\ & = e_{mn} -e_{nm} = Z_{mn}. 
\end{align*}
Since the set of matrices $Z_{mn}$ defines a basis of $o(d)$, the map 
$\Delta_O$ is surjective.
Finally, by Lemma \ref{lem_coboundary_condition_O}, 
$B^2(O_d^+)=\Delta_O\big(Z^2(O_d^+)\big)$, and thus $\Delta_O$ establishes the 
homomorphism between $H^2(O_d^+)=Z^2(O_d^+)/ \ker \Delta_O$ and $o(d)$. 

Now if $d=2$ it suffices to exhibit a 2-cocycle $c \in Z^2(O_2^+)$ such that $\Delta_O(c) \neq 0$. To this end it suffices to consider matrices $Z_1=\begin{pmatrix}
1 & 0 \\ 0 & -1
\end{pmatrix}$ and $Z_2=\begin{pmatrix}
	0 & 1 \\ 1 & 0
\end{pmatrix}$,
corresponding \emph{anti-Gaussian} cocycles (see Proposition \ref{prop_v_onplus}) $\eta_{Z_1}$ and $\eta_{Z_2}$, and the 2-cocycle $K(\eta_{Z_1},\eta_{Z_2})$ defined via  Lemma \ref{lem_2-cocycle}. Then one can check that 
\[ \Delta_O(K(\eta_{Z_1},\eta_{Z_2}) ) = \begin{pmatrix}
0 & 2 \\ -2 & 0
\end{pmatrix} \]
and the proof is finished (the case $d=1$ is trivial).
\end{proof}

\section*{Acknowledgements}
BD would like to thank the support of Mathematics Research Unit, University of Oulu in the years 2016--2017, when a part of the
work was done. 
UF was supported by the French ``Investissements d'Avenir'' program, project 
ISITE-BFC (contract ANR-15-IDEX-03).  AK was supported by the National Science Centre grant SONATA 
2016/21/D/ST1/03010. AS was partially supported by the NCN (National Science Centre) grant 2014/14/E/ST1/00525. UF and AS acknowledge support by the French 
MAEDI and MENESR and by the Polish MNiSW through the Polonium programme. We 
thank Julien Bichon and Malte Gerhold for useful comments.

\end{document}